\newtheorem{defn}{Definition}
\newtheorem{thm}{Theorem}[section]
\newtheorem{prop}[thm]{Proposition}
\newtheorem{lemme}[thm]{Lemma}
\theoremstyle{remark}
\newtheorem*{remark}{Remark}
\DeclareMathOperator{\num}{num}
\DeclareMathOperator{\nd}{nd}
\DeclareMathOperator{\DIV}{div}
\DeclareMathOperator{\Ker}{Ker}
\DeclareMathOperator{\vol}{vol}
\DeclareMathOperator{\image}{Im}
\DeclareMathOperator{\adj}{adj}
\DeclareMathOperator{\ac}{ac}
\DeclareMathOperator{\sing}{sing}
\begin{document}
\title[A
Kawamata-Viehweg-Nadel type vanishing theorem]
{Numerical dimension and a
Kawamata-Viehweg-Nadel type vanishing theorem on compact Kähler manifolds}
\author{Junyan CAO}
\email{junyan.cao@ujf-grenoble.fr}
\address{Université de Grenoble I, Institut Fourier, 38402 Saint-Martin d'Hères, France}

\begin{abstract}
Let $X$ be a compact Kähler manifold and let $L$ be a
pseudo-effective line bundle on $X$ with singular metric $\varphi$.
We first define a notion of numerical dimension of the pseudo-effective pair $(L, \varphi)$
and then discuss the properties of it. 
We prove also a very general Kawamata-Viehweg-Nadel type vanishing theorem
on an arbitrary compact Kähler manifold.
\end{abstract}

\maketitle
\section{Introduction}

Let $X$ be a compact Kähler manifold and let $(L, \varphi)$ be a
pseudo-effective line bundle on $X$ 
(c.f Section 3 for the definition of a pseudo-effective pair $(L, \varphi)$).
H.Tsuji \cite{Tsu} has defined a notion of numerical dimension by an
algebraic method: 
\begin{defn}
Let $ X $ be a projective variety and $ (L, \varphi) $ a pseudo-effective line bundle.
One defines
$$\nu_{\num}(L,\varphi)=\sup\{\dim V\mid  V \text{ subvariety of }X
\text{ such that} $$
$$\varphi \text{ is well defined on }V \text{ and }
(V, L,\varphi) \text{ is big.}\}$$
\end{defn}

\hspace{-12pt}Here $ (V, L, \varphi) $ to be big means that there is a desingularization 
$\pi: \widetilde{V}\rightarrow V$
such that
$$\varlimsup\limits_{m\to \infty}\frac{h^{0}(\widetilde{V},m\pi^{*}(L)\otimes\mathcal{I}(m \varphi\circ\pi))}{m^{n}}> 0$$
where $ n $ is the dimension of $ V $.
\footnote{\cite{Tsu} proved that the bigness does not depend on the choice of
desingularization.}
\vspace{5 pt}

Since Tsuji's definition depends on the existence of subvarieties, 
it is more convenient to find an analytic definition if the base manifold is not
projective.
Following a suggestion of J-P. Demailly, we first define a notion of
numerical dimension $\nd (L,\varphi)$
(cf. Definition 4) 
for a pseudo-effective line bundle $(L, \varphi)$ on a manifold $X$ which is just assumed to be compact Kähler.
The definition involves a certain cohomological intersection product of positive currents, introduced in Section 2.
We then discuss the properties of $\nd (L,\varphi)$ in Section 3 and 4. The main
properties are as follows.
\begin{prop}
Let $(L,\varphi)$ be a pseudo-effective line bundle on a projective variety $X$
of dimension $n$ and $\nd(L,\varphi)=n$.
Then
$$\varliminf\limits_{m\to
\infty}\frac{h^{0}(X,mL\otimes\mathcal{I}(m\varphi))}{m^{n}}> 0 .$$
\end{prop}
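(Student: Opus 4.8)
The plan is to read off from the hypothesis that a cohomological intersection number is positive, and then to estimate $h^{0}(X,mL\otimes\mathcal{I}(m\varphi))$ from below by the asymptotic Riemann--Roch datum of a big $\mathbb{Q}$-divisor class produced from a Zariski-type decomposition of $(L,\varphi)$. Unwinding Definition~4, the hypothesis $\nd(L,\varphi)=n$ says exactly that the non-pluripolar self-intersection $\int_{X}\langle\Theta_{\varphi}^{\,n}\rangle$ of the curvature current $\Theta_{\varphi}$ is strictly positive; in particular $\vol(L)\ge\int_{X}\langle\Theta_{\varphi}^{\,n}\rangle>0$, so $L$ is big. This last fact alone is not enough, because $\varphi$ may be far more singular than a metric with minimal singularities, and the whole difficulty is to keep the ideals $\mathcal{I}(m\varphi)$ under control.

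Second, I would set up the graded family. Since $\varphi$ is plurisubharmonic, the subadditivity theorem for multiplier ideals gives $\mathcal{I}(m_{1}\varphi)\cdot\mathcal{I}(m_{2}\varphi)\subseteq\mathcal{I}((m_{1}+m_{2})\varphi)$, so $\{\mathcal{I}(m\varphi)\}_{m\ge 0}$ is a graded family of ideal sheaves on the projective variety $X$. Since $L$ is big, the theory of graded linear series (Okounkov bodies, Lazarsfeld--Mustata, Kaveh--Khovanskii) applies and yields that the limit
$$V:=\lim_{m\to\infty}\frac{n!}{m^{n}}\,h^{0}\bigl(X,mL\otimes\mathcal{I}(m\varphi)\bigr)$$
exists; hence it suffices to prove $V>0$, and then $\varliminf_{m\to\infty}h^{0}(X,mL\otimes\mathcal{I}(m\varphi))/m^{n}=V/n!>0$, which is the assertion. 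Moreover the Fujita-type approximation for graded families identifies $V$ with $\sup_{p}\vol\bigl(\mu_{p}^{*}L-\tfrac{1}{p}F_{p}\bigr)$, where $\mu_{p}\colon X_{p}\to X$ resolves $\mathcal{I}(p\varphi)$ and $\mathcal{I}(p\varphi)\cdot\mathcal{O}_{X_{p}}=\mathcal{O}_{X_{p}}(-F_{p})$; as $p\to\infty$ the $\mathbb{Q}$-divisors $\tfrac{1}{p}F_{p}$ converge to the effective divisor carrying the Lelong numbers of $\varphi$ (together with the exceptional divisors extracting its singularities in codimension $\ge 2$), so the whole game is to see that these algebraic volumes approach $\int_{X}\langle\Theta_{\varphi}^{\,n}\rangle$.

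Third --- and this is the crux --- I would show $V\ge\int_{X}\langle\Theta_{\varphi}^{\,n}\rangle$; combined with Step~1 this gives $V>0$ and finishes the proof. Concretely one must manufacture about $\tfrac{1}{n!}\bigl(\int_{X}\langle\Theta_{\varphi}^{\,n}\rangle\bigr)m^{n}$ sections of $mL\otimes\mathcal{I}(m\varphi)$ for $m$ large. The scheme I would follow is: replace $\varphi$ by an equisingular approximation $\varphi_{\varepsilon}$ with analytic singularities, so that $\mathcal{I}(m\varphi_{\varepsilon})=\mathcal{I}(m\varphi)$ for the relevant range of $m$, at the price of a curvature loss $\Theta_{\varphi_{\varepsilon}}\ge-\varepsilon\omega$; pass to a log resolution $\mu_{\varepsilon}$ on which $\varphi_{\varepsilon}$ has divisorial singularities with divisor $D_{\varepsilon}$, so that $\mu_{\varepsilon}^{*}c_{1}(L)-[D_{\varepsilon}]$ carries a smooth form which is $\ge-\varepsilon\mu_{\varepsilon}^{*}\omega$ and whose top self-intersection is $\int_{X}\langle\Theta_{\varphi}^{\,n}\rangle-o(1)$; and then apply the Ohsawa--Takegoshi $L^{2}$ extension theorem to solve the relevant $\bar\partial$-equations with weight essentially $m\varphi$, producing the required number of sections of $mL$ that are square-integrable against $e^{-m\varphi}$ and hence lie in $\mathcal{I}(m\varphi)$.

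The step I expect to be the real obstacle is exactly this lower bound $V\ge\int_{X}\langle\Theta_{\varphi}^{\,n}\rangle$. Two features make it delicate. First, the regularizing sequence $\varphi_{\varepsilon}\downarrow\varphi$ needed to gain analytic singularities is less singular than $\varphi$, so it enlarges the multiplier ideal --- the inclusion $\mathcal{I}(m\varphi)\subseteq\mathcal{I}(m\varphi_{\varepsilon})$ runs the wrong way for us --- so one must use a genuine equisingular approximation, or else carry the weight $\varphi$ itself through the $L^{2}$ estimate. Second, the singularities of $\varphi$ in codimension $\ge 2$, which do not show up in any divisorial part but do lower $\int_{X}\langle\Theta_{\varphi}^{\,n}\rangle$, must be shown not to cost more sections than they should; this is precisely the point at which the Ohsawa--Takegoshi extension theorem, rather than a soft $L^{2}$ estimate, is needed, since the $\varepsilon\omega$-loss in the regularization has to be recovered on descent. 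Everything else --- the existence of the limit $V$, the birational bookkeeping with discrepancies and roundings, and the classical equivalence ``positive volume $\Leftrightarrow$ big'' for $\mathbb{Q}$-divisors on a projective variety --- is routine.
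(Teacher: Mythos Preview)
Your outline correctly isolates the hard step but does not close it, and the tools you propose for it will not work as stated. An approximation $\varphi_{\varepsilon}$ with analytic singularities satisfying $\mathcal{I}(m\varphi_{\varepsilon})=\mathcal{I}(m\varphi)$ simultaneously for a range of $m$ does not exist in general: Demailly's equisingular approximation preserves only $\mathcal{I}(\varphi)$ itself, while a quasi-equisingular approximation $\varphi_{k}$ yields merely $\mathcal{I}(m(1+\delta)\varphi_{k})\subset\mathcal{I}(m\varphi)$ for $k\geq k_{0}(\delta,m)$. And Ohsawa--Takegoshi extends sections from subvarieties; it does not by itself absorb an $\varepsilon\omega$ curvature loss on the ambient manifold, so invoking it at the end of your Step~3 is a gesture rather than an argument. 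Your Fujita-approximation identity $V=\sup_{p}\vol(\mu_{p}^{*}L-\tfrac{1}{p}F_{p})$ is fine, but comparing the right-hand side to $\langle(\tfrac{i}{2\pi}\Theta_{\varphi})^{n}\rangle$ still requires exactly the missing ingredient.

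The paper circumvents this by a different mechanism. Using the explicit Bergman-kernel approximation $\varphi_{k}$ (Proposition~3.2), Lemma~3.1 supplies the inclusion in the \emph{needed} direction: $\mathcal{I}\bigl(\tfrac{m\cdot 2^{q_{k}}}{2^{q_{k}}-m}\varphi_{k}\bigr)\subset\mathcal{I}(m\varphi)$, so slightly rescaling the less singular $\varphi_{k}$ makes its ideal sit inside $\mathcal{I}(m\varphi)$. Strict positivity is then manufactured by the holomorphic Morse inequality (Proposition~3.3): from the uniform bound $\int_{X}(\tfrac{i}{2\pi}\Theta_{\varphi_{k}}(L)+\epsilon\omega)_{\ac}^{\,n}\geq C>0$ one gets, on a log resolution, $F_{k}^{n}>n\delta\,F_{k}^{n-1}\cdot\pi^{*}A$, hence $F_{k}-\delta\pi^{*}A$ is pseudo-effective, producing metrics $\widetilde{\varphi}_{k}$ with $\tfrac{i}{2\pi}\Theta_{\widetilde{\varphi}_{k}}(L)\geq\tfrac{\delta}{2}\omega$ and $\varphi_{k}\preccurlyeq\widetilde{\varphi}_{k}$. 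Their regularized upper envelope $\widetilde{\varphi}$ (Proposition~3.5) retains both properties for all $k$. Chaining these gives $h^{0}(X,mL\otimes\mathcal{I}(m\varphi))\geq h^{0}(X,mL\otimes\mathcal{I}(c_{m}\widetilde{\varphi}))$ with $c_{m}\to 1$, and the right side grows like $m^{n}$ because $\widetilde{\varphi}$ has strictly positive curvature. The Morse-inequality step is precisely the device your Step~3 lacks: it converts ``positive top self-intersection of the absolutely continuous part'' into an honest metric of strictly positive curvature, at the cost of increasing the singularities---harmless here because the chain of ideal inclusions has already been arranged to point the right way.
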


\begin{prop}
Let $(L,\varphi)$ be a pseudo-effective line bundle on a projective variety $X$. 
Then 
$$ \nu_{\num} (L, \varphi) =\nd (L,\varphi) .$$
\end{prop}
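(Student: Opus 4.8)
The plan is to prove the two inequalities $\nd(L,\varphi)\le\nu_{\num}(L,\varphi)$ and $\nu_{\num}(L,\varphi)\le\nd(L,\varphi)$ separately, and in both directions the bridge between the ``algebraic'' side $\nu_{\num}$ (bigness of restrictions) and the ``analytic'' side $\nd$ (positivity of a cohomological self-intersection of a positive current) will be the preceding Proposition, i.e.\ the statement that $\nd(L,\varphi)=\dim X$ forces $\varliminf_{m}h^{0}(X,mL\otimes\mathcal I(m\varphi))/m^{\dim X}>0$, applied not on $X$ but on suitable subvarieties. The auxiliary inputs I would rely on are the invariance of $\nd$ under bimeromorphic modifications, the monotonicity of the cohomological intersection product under increase of singularities, and its compatibility with restriction to a generic complete intersection (Section~2 and Sections~3--4). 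Fix an ample class $\{\omega\}=c_{1}(\mathcal O_{X}(1))$; since $X$ is projective, $\nd(L,\varphi)$ is then the largest integer $p$ for which the $p$-th positive cohomological self-intersection associated with $(L,\varphi)$ in Definition~4, paired with $\{\omega\}^{n-p}$, is strictly positive.

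For $\nd\le\nu_{\num}$: put $k=\nd(L,\varphi)$, so that the $k$-th self-intersection pairs positively with $\{\omega\}^{n-k}$. I would cut $X$ by $n-k$ general members $H_{1},\dots,H_{n-k}$ of $|\mathcal O_{X}(m_{0})|$, $m_{0}\gg0$, and set $V=H_{1}\cap\dots\cap H_{n-k}$: by Bertini this is an irreducible $k$-dimensional subvariety on which, for a generic choice, $\varphi|_{V}\not\equiv-\infty$, so $\varphi$ is well defined on $V$. Compatibility of the intersection product with restriction along a generic complete intersection transfers the positive pairing with $[H_{1}]\cdots[H_{n-k}]$ on $X$ to a positive top self-intersection on $V$, which by monotonicity gives $\nd(L|_{V},\varphi|_{V})\ge k$, hence $=k=\dim V$. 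Passing to a desingularization $\pi\colon\widetilde V\to V$ (modification-invariance of $\nd$) and applying the preceding Proposition on $\widetilde V$ yields $\varlimsup_{m}h^{0}(\widetilde V,m\pi^{*}L\otimes\mathcal I(m\varphi\circ\pi))/m^{k}>0$, i.e.\ $(V,L,\varphi)$ is big; thus $\nu_{\num}(L,\varphi)\ge k$.

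For $\nu_{\num}\le\nd$: let $V\subset X$ be any subvariety of dimension $k$ on which $\varphi$ is well defined and with $(V,L,\varphi)$ big, witnessed by $\pi\colon\widetilde V\to V$ with $\varlimsup_{m}h^{0}(\widetilde V,m\pi^{*}L\otimes\mathcal I(m\varphi\circ\pi))/m^{k}>0$. Via Demailly's approximation of $\varphi\circ\pi$ by $\tfrac1m$ times the logarithm of a generating family of $\mathcal I(m\varphi\circ\pi)$, this $h^{0}$-growth produces closed positive currents on $\widetilde V$ with $k$-fold self-intersections bounded below by a fixed positive constant, whose limit is the $k$-th self-intersection of $(\pi^{*}L,\varphi\circ\pi)$; hence the latter is positive and $\nd(\pi^{*}L,\varphi\circ\pi)=k=\dim\widetilde V$, so by modification-invariance $\nd(L|_{V},\varphi|_{V})=\dim V$. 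It then remains to push this up to $X$: comparing the restriction to $V$ of the current of $(L,\varphi)$ with the minimal-singularity current of $(L|_{V},\varphi|_{V})$, and pushing forward, one obtains a positive pairing of the $k$-th self-intersection of $(L,\varphi)$ on $X$ with $[V]$; since $[V]$ is a positive multiple of $\{\omega\}^{n-k}$, this gives $\nd(L,\varphi)\ge k$. Taking the supremum over admissible $V$ yields $\nd(L,\varphi)\ge\nu_{\num}(L,\varphi)$, and together with the first step this gives the equality.

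The hard part will be the interaction between the cohomological intersection product of positive currents and restriction to a subvariety. In the first direction I must know that pairing the $k$-th self-intersection on $X$ with $[H_{1}]\cdots[H_{n-k}]$ really equals, or at least bounds from below, the top self-intersection of the restricted current on a general complete intersection $V$ --- a Bertini type statement for non-pluripolar products that has to accommodate the possibly non-empty singular locus of $V$; in the second direction I must control the mass lost when descending from the resolution $\widetilde V$ to $V$ and when replacing the restricted current by the minimal-singularity current on $V$. Pinning these two comparisons down within the formalism of Section~2, and invoking the invariance and monotonicity properties of $\nd$ from Sections~3--4, is where the real work concentrates; everything else is bookkeeping around the preceding Proposition.
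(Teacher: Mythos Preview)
Your first inequality $\nd\le\nu_{\num}$ matches the paper's argument: both cut by general hyperplane sections of an ample bundle and feed the resulting top-dimensional situation into Proposition~3.6. The paper phrases this as an induction on $\dim X$, restricting one hyperplane at a time and invoking Lemma~4.1 to see that a quasi-equisingular approximation on $X$ restricts to one on a general $S\in|A|$; your version cuts all $n-k$ hyperplanes at once, which amounts to iterating Lemma~4.1. This direction is fine.

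The second inequality $\nu_{\num}\le\nd$ is where your outline diverges from the paper and where there is a genuine gap. The quantity $\nd(L,\varphi)$ is defined through a quasi-equisingular approximation \emph{on $X$}; to show $\nd(L,\varphi)\ge k$ you must exhibit such an approximation $\{\varphi_m\}$ on $X$ with $\int_X(\Theta_{\varphi_m})_{\ac}^{k}\wedge\omega^{n-k}$ bounded below. Your plan instead establishes $\nd(L|_V,\varphi|_V)=k$ on $V$ (via Bergman kernels on $\widetilde V$) and then proposes to ``push up'' by comparing with a minimal-singularity current and pairing with $[V]$. Two problems: first, the phrase ``$[V]$ is a positive multiple of $\{\omega\}^{n-k}$'' is simply false for an arbitrary subvariety $V$; what is true (and what the paper uses) is the cohomological inequality $[V]\le C\,\omega^{n-k}$ together with nefness of $(\Theta_{\varphi_m})_{\ac}$ on a log resolution. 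Second, and more seriously, you have no mechanism linking a quasi-equisingular approximation on $X$ to your positivity on $V$: Lemma~4.1 only controls restriction to a \emph{general} hypersurface, not to an arbitrary $V$, and Proposition~2.2 is stated only for a single factor paired with a smooth $(n-1,n-1)$-form, not for $k$-fold self-products restricted to a possibly singular subvariety.

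The paper closes this gap by working the other way around. It blows up $X$ along the ideal of $V$ to get $\pi:\widetilde X\to X$ with smooth strict transform $\widetilde V$, builds $\varphi_m$ on $X$ as the pushforward of the Bergman kernel of $H^{0}(\widetilde X,A+m\pi^{*}L\otimes\mathcal I(m\varphi\circ\pi))$, checks (via Lemma~3.1 and the formula $\pi_{*}(K_{\widetilde X/X}\otimes\pi^{*}\mathcal I(\varphi))=\mathcal I(\varphi)$) that this is quasi-equisingular for $\varphi$ on $X$, and then proves in Lemma~4.3 that $\int_X(\Theta_{\varphi_m})_{\ac}^{k}\wedge\omega^{n-k}>0$. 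The key step there is that the Ohsawa--Takegoshi surjection $H^{0}(\widetilde X,\dots)\twoheadrightarrow H^{0}(\widetilde V,\dots)$ forces the ratio of the two Bergman kernels to be smooth and nonvanishing on $\widetilde V$, so $(\Theta_{\varphi_m\circ\pi})_{\ac}|_{\widetilde V}$ and $(\Theta_{\varphi_m'})_{\ac}$ lie in the same cohomology class on $\widetilde V$; the bigness of $(V,L,\varphi)$ then gives the lower bound. This global Bergman-kernel construction on $\widetilde X$, together with the Ohsawa--Takegoshi comparison, is the missing idea in your ``push up'' step.
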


Our main interest in this article is to prove a very general Kawamata-Viehweg-Nadel type vanishing theorem
on an arbitrary compact Kähler manifold. Our statement is as follows.
\begin{thm}
Let $(L, \varphi)$ be a pseudo-effective
line bundle on a compact Kähler manifold $X$ of dimension $n$. Then
$$ H^{p}(X, K_{X}\otimes L\otimes \mathcal{I}_{+}(\varphi))=0\qquad
\text{for any }  p\geq n-\nd (L,\varphi)+1 ,$$
where $\mathcal{I}_{+}(\varphi)$ is the upper semicontinuous variant of the multiplier ideal sheaf
associated to $\varphi$ (cf. \cite{FJ}).
\end{thm}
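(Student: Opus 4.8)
The plan is to pass to an $L^2$ description of the cohomology, to regularise the singular metric, and then to run a Bochner--Kodaira--Nakano argument in which the numerical-dimension hypothesis plays the role that pointwise curvature positivity plays in the classical Nadel vanishing theorem. First I would reduce to an $L^2$ solvability statement: by Demailly's theorem that the complex of $L$-valued $(n,\bullet)$-forms which are locally $L^2$ against a weight $e^{-\psi}$ is a fine resolution of $\mathcal{O}(K_X\otimes L)\otimes\mathcal{I}(\psi)$, and since on the compact $X$ the increasing family $\{\mathcal{I}((1+\delta)\varphi)\}_{\delta>0}$ stabilises to $\mathcal{I}_+(\varphi)=\mathcal{I}((1+\delta_0)\varphi)$ for some $\delta_0>0$, the group $H^p(X,K_X\otimes L\otimes\mathcal{I}_+(\varphi))$ is computed by such $L^2$ forms. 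Thus it suffices to show that a $\bar\partial$-closed $L$-valued $(n,p)$-form $u$ which is $L^2$ for the relevant weight, with $p\ge n-m+1$ and $m:=\nd(L,\varphi)$, is $\bar\partial$ of an $L^2$ form; working with $\mathcal{I}_+$ rather than $\mathcal{I}$ is precisely what will make the final limiting step legitimate.

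Next I would regularise. Using Demailly's regularisation of closed positive $(1,1)$-currents together with the Demailly--Peternell--Schneider equisingular approximation, I obtain weights $\varphi_\varepsilon$ ($\varepsilon\downarrow 0$) with analytic singularities along analytic subsets $Z_\varepsilon$, decreasing to $\varphi$, such that $\beta_\varepsilon:=i\Theta_{L,\varphi_\varepsilon}+\varepsilon\omega\ge 0$ on $X\setminus Z_\varepsilon$, and such that $\mathcal{I}(\varphi_\varepsilon)$ is squeezed between $\mathcal{I}((1+\delta(\varepsilon))\varphi)$ and $\mathcal{I}(\varphi)$ with $\delta(\varepsilon)\to 0$; in particular $u$ remains $L^2$ for each weight $\varphi_\varepsilon$. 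The hypothesis $\nd(L,\varphi)\ge m$ enters here through the cohomological intersection product of Section~2: it yields a uniform lower bound $\int_X\langle\beta_\varepsilon^{\,m}\rangle\wedge\omega^{\,n-m}\ge c>0$ for small $\varepsilon$, i.e.\ $\beta_\varepsilon$ has ``generic rank $\ge m$'' in a quantitative, mass-controlled sense.

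On the Zariski-open set $X\setminus Z_\varepsilon$, equipped with a complete Kähler metric (Demailly's device for carrying $L^2$ estimates across the singular locus), I would apply the Bochner--Kodaira--Nakano--Hörmander inequality for the weight $e^{-\varphi_\varepsilon}$, decomposing the curvature operator on $(n,p)$-forms as $[\,i\Theta_{L,\varphi_\varepsilon},\Lambda_{\omega_\varepsilon}\,]=[\,\beta_\varepsilon,\Lambda_{\omega_\varepsilon}\,]-\varepsilon[\,\omega,\Lambda_{\omega_\varepsilon}\,]$ for a suitably chosen Kähler metric $\omega_\varepsilon$ on $X\setminus Z_\varepsilon$ --- for instance one produced from a Monge--Ampère equation with right-hand side comparable to $\langle\beta_\varepsilon^{\,m}\rangle\wedge\omega^{\,n-m}$, or a form of type $\omega+t_\varepsilon\beta_\varepsilon$ --- arranged so that the $m$-positivity of $\beta_\varepsilon$ forces $[\beta_\varepsilon,\Lambda_{\omega_\varepsilon}]$ to be strictly positive on $(n,p)$-forms with $p\ge n-m+1$, outside a set whose contribution is negligible. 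This would give a solution of $\bar\partial v_\varepsilon=u$ with $\int_X|v_\varepsilon|^2e^{-\varphi_\varepsilon}\le C$ uniformly in $\varepsilon$; a weak limit $v=\lim v_\varepsilon$, together with $\varphi_\varepsilon\downarrow\varphi$ and weak lower semicontinuity of the $L^2$ norms, satisfies $\bar\partial v=u$ with $v$ in the required $L^2$ space, whence $[u]=0$.

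The hard part will be the third step: converting the purely cohomological statement $\nd(L,\varphi)\ge m$ into positivity usable inside a curvature estimate. Pointwise, $\beta_\varepsilon$ may have far fewer than $m$ positive eigenvalues over a large part of $X$, so the naive Andreotti--Grauert/Girbau-type vanishing is unavailable --- the numerical-dimension hypothesis controls $\beta_\varepsilon$ only in integrated form. Bridging this gap, whether by producing a Kähler metric $\omega_\varepsilon$ with respect to which $\beta_\varepsilon$ genuinely has rank $\ge m$ off a set of small mass (via Monge--Ampère / Kolodziej-type estimates) and then upgrading the resulting integral inequality to the conclusion by exploiting the rigidity of harmonic representatives ($\nabla''$-parallelism, in the spirit of Enoki's theorem for semipositive line bundles), or by a direct weighted summation of the Bochner inequality, is where the real work lies --- together with the bookkeeping of uniform constants in $\varepsilon$ and the control of the estimates near the analytic sets $Z_\varepsilon$.
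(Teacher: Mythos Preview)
Your broad outline---equisingular regularisation, a Monge--Amp\`ere construction, and a Bochner--Kodaira estimate---matches the paper, but two of your key steps would not go through as stated, and the paper's actual devices for them are worth knowing. First, you cannot solve $\bar\partial v_\varepsilon=u$ \emph{exactly} with uniform bounds: even after the Monge--Amp\`ere correction the curvature operator on $(n,p)$-forms is positive only outside a bad set of small but nonzero volume, so the right-hand side of the H\"ormander estimate is not uniformly controlled. The paper instead (following \cite{DP}) solves approximately, writing $f=\bar\partial u_k+v_k$ with an error term satisfying $\int_X|v_k|^2e^{-2\widehat\varphi_k}\to 0$. The Monge--Amp\`ere equation is solved on a log resolution to modify the \emph{hermitian metric on $L$} (not the K\"ahler metric on $X$ as you suggest), producing weights $\widehat\varphi_k$ with $\mathcal I(\widehat\varphi_k)=\mathcal I_+(\varphi)$ and $\prod_i(\lambda_{i,k}+2\epsilon_k)\ge C\epsilon_k^{\,n-d}$ pointwise; then AM--GM gives $\lambda_{p,k}+2\epsilon_k\ge\epsilon_k^{\alpha}$ off a set of volume $\le\epsilon_k^{\beta}$ (for suitable $0<\alpha<1$, $\beta>0$), which is exactly enough to force $v_k\to 0$ in the estimate above.

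Second, the passage to the limit is not the weak-convergence argument you sketch: a weak limit of the $u_k$ would a priori land only in $\mathcal I(\varphi)$, not $\mathcal I_+(\varphi)$, and the map $H^p(\mathcal I_+(\varphi))\to H^p(\mathcal I(\varphi))$ need not be injective. Instead the paper proves a Fr\'echet-space lemma (Lemma~5.5): because $\mathcal I_+(\varphi)=\mathcal I(\psi)$ for some $\psi$ with \emph{analytic} singularities (Lemma~5.4), the \v{C}ech cocycle space $Z^p(\mathcal U,\mathcal I_+(\varphi))$ is Fr\'echet for local $L^2$ convergence against a \emph{smooth} metric, and the coboundary has closed range by finite-dimensionality of cohomology. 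Since $\widehat\varphi_k\le 0$, the weighted convergence $\int|v_k|^2e^{-2\widehat\varphi_k}\to 0$ yields smooth-metric $L^2$ convergence of the \v{C}ech representatives of $v_k$ to $0$, hence $[u]=0$ directly in $H^p(X,K_X\otimes L\otimes\mathcal I_+(\varphi))$. This closed-range step is the precise reason $\mathcal I_+$ is needed---not for integrability of a limit solution as you suggest, but so that the cochain spaces are closed under $L^2$ limits. The Enoki-type $\nabla''$-parallelism you float as an alternative is not used and would not obviously help, since one never produces a genuinely semipositive curvature form on $L$.
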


The organization of the article is as follows. In Section 2, we first recall some elementary results about the 
analytic multiplier ideal sheaves and 
define our cohomological product of positive currents by quasi-equisingular approximation.
In Section 3, using the product defined in Section 2, 
we give our definition of the numerical dimension $\nd (L,\varphi)$ of
pseudo-effective line bundles with singular metrics. 
The main goal of this section is to give an asymptotic estimate when $\nd(L,\varphi)= \dim X$. 
In section 4, we prove that our numerical dimension coincides with the
definition in \cite{Tsu} when $X$ is projective. 
We also give a numerical criterion of the numerical dimension and discuss a
relationship between the numerical dimension without multiplier ideal sheaves
and the numerical dimension defined here. 
In Section 5, we first give a quick proof of a Kawamata-Viehweg-Nadel type
vanishing theorem on projective varieties. 
We finally generalize the vanishing theorem on arbitrary compact Kähler
manifolds by the methods developed in \cite{DP} and \cite{Mou}.

\hspace{-12pt}\textbf{Acknowledgements:} I would like to thank Professor J-P.
Demailly for numerous ideas and suggestions for this article, 
and also for his patience and disponibility.

\section{Cohomological product of positive currents}

We first recall some basic definitions and results about quasi-psh
functions. 
Let $X$ be a complex manifold. 
We say that $\varphi$ is a psh function (resp. a quasi-psh function) on $X$, 
if 
$$i\partial\overline{\partial}\varphi\geq 0 , \hspace{30pt}( \text{resp. }
i\partial\overline{\partial}\varphi\geq -c\cdot\omega_{X} )$$ 
where $c$ is a positive constant and $\omega_{X}$ is a smooth hermitian metric on $X$. 
We say that a quasi-psh function $\varphi$ has analytic singularities, if
$\varphi$ is locally of the form
$$\varphi(z)=c\cdot \ln (\sum |g_{i}|^{2})+O(1)$$
with $c>0$ and $\{g_{i}\}$ are holomorphic functions. 
Let $\varphi, \psi$ be two quasi-psh functions. 
We say that $\varphi$ is less singular than $\psi$ if 
$$\psi\leq\varphi+C$$ 
for some constant $C$. 
We denote it $\varphi\preccurlyeq\psi$.

We now recall the analytic definition of multiplier ideal sheaves.
Let $\mathcal{I}(\varphi)$ be the multiplier ideal sheaves associated to the
quasi-psh function $\varphi$, i.e.
$$\mathcal{I}(\varphi)_{x}=\{f\in \mathcal{O}_{X}| \int_{U_{x}}
|f|^{2}e^{-2\varphi}< +\infty \} $$
where $U_{x}$ is some open neighborhood of $x$ in $X$
(cf. \cite{Dem} for a more detailed introduction to the concept of
multiplier ideal sheaf).
When $\varphi$ does not possess analytic singularities, 
we need to introduce the ``upper semicontinuous regularization'' of the multiplier ideal sheaf,
namely the ideal sheaf
$$\mathcal{I}_{+}(\varphi)=\lim_{\epsilon\rightarrow 0^{+}}\mathcal{I}((1+\epsilon)\varphi).$$
By the Notherian property of coherent ideal sheaves, there exists an $\epsilon > 0$ such that 
$$\mathcal{I}_{+}(\varphi)=\mathcal{I}((1+\epsilon')\varphi)
\qquad\text{for any }0< \epsilon'< \epsilon.$$
When $\varphi$ has analytic singularities, 
it is easy to see that $\mathcal{I}_{+}(\varphi)=\mathcal{I}(\varphi)$. 
Conjecturally we have the equality for all psh functions. 
\footnote{This equality is well known in dimension $1$ 
and is proved to be true in dimension 2 by Favre-Jonsson \cite{FJ}.
See \cite{DP} for more details about $\mathcal{I}_{+}(\varphi)$.}

\hspace{-12pt}\textbf{Important Convention:} 
When we talk about a line bundle $L$ on $X$, 
we always first implicitly fix a smooth metric $h_{0}$ on $L$. 
Given a singular metric $\varphi$ on $L$ or sometimes $\varphi$ for simplicity, 
we just means that the new metric on $L$ is given by $h_{0}e^{-\varphi}$.
Recall that the curvature of the metric $\varphi$ for $L$ is
$$\frac{i}{2\pi}\Theta_{\varphi}(L)=\frac{i}{2\pi}\Theta_{h_{0}}(L)+dd^{c}\varphi .$$
The pair $(L,\varphi)$ is said to be a pseudo-effective line bundle if
$\frac{i}{2\pi}\Theta_{\varphi}(L)\geq 0$ as a current.
\vspace{10 pt}

Let $\pi: \widetilde{X}\rightarrow X$ be a modification of a smooth variety $X$, 
and let $\varphi$, $\psi$ be two quasi-psh fuctions on $X$
such that
$\mathcal{I}(\varphi)\subset \mathcal{I}(\psi)$.
In general, this inclusion does not imply
$\mathcal{I}(\varphi\circ\pi)\subset \mathcal{I}(\psi\circ\pi).$
We thus need the following lemma.
\begin{lemme}
Let $E=\pi^{*}K_{X}-K_{\widetilde{X}}$. 
If $\mathcal{I}(\varphi)\subset \mathcal{I}(\psi)$, then 
$$\mathcal{I}(\varphi\circ\pi)\otimes\mathcal{O}(-E)\subset \mathcal{I}(\psi\circ\pi),$$
where the sheaf $\mathcal{O}(-E)$ is the germs of holomorphic functions $f$ such that $\DIV(f)\geq E$.  

\end{lemme}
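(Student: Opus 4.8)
The plan is to deduce this from the hypothesis $\mathcal{I}(\varphi)\subset\mathcal{I}(\psi)$ on $X$ by a change of variables under $\pi$, the twist by $\mathcal{O}(-E)$ serving precisely to absorb the Jacobian of $\pi$. The assertion is local on $\widetilde{X}$ and is obvious over $\widetilde{X}\setminus\mathrm{Exc}(\pi)$, where $E=0$ and $\pi$ is a biholomorphism; so I would fix $\tilde{x}\in\mathrm{Exc}(\pi)$, put $x=\pi(\tilde{x})$, choose a small Stein ball $U\ni x$ with coordinates $(w_i)$ together with coordinates $(z_j)$ near $\tilde{x}$, and let $J=\det(\partial w_i/\partial z_j)$ be the Jacobian determinant of $\pi$. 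Then $\DIV(J)$ is the relative canonical (ramification) divisor of $\pi$, and near $\tilde{x}$ the sheaf $\mathcal{O}(-E)$ is the ideal generated by $J$; hence a germ of $\mathcal{I}(\varphi\circ\pi)\otimes\mathcal{O}(-E)$ at $\tilde{x}$ has the form $g=Jg'$ with $g'\in\mathcal{I}(\varphi\circ\pi)$, i.e. $\int_{\widetilde{V}}|g'|^{2}e^{-2\varphi\circ\pi}\,dV_{z}<+\infty$ for some neighbourhood $\widetilde{V}$ of $\tilde{x}$, and the task is to show $\int|Jg'|^{2}e^{-2\psi\circ\pi}\,dV_{z}<+\infty$ near $\tilde{x}$.

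The key observation is that on $\widetilde{V}\setminus\mathrm{Exc}(\pi)$ the holomorphic $n$-form $g\,dz_{1}\wedge\cdots\wedge dz_{n}$ equals $\pi^{*}(h\,dw_{1}\wedge\cdots\wedge dw_{n})$, where $h:=g'\circ\pi^{-1}$ is holomorphic on $\Omega:=\pi(\widetilde{V}\setminus\mathrm{Exc}(\pi))\subset U\setminus Z$, with $Z:=\pi(\mathrm{Exc}(\pi))$ of codimension $\geq 2$ in the smooth variety $X$; since $\mathrm{Exc}(\pi)$ is of pure codimension $1$, the set $\Omega$ is Stein. As $\pi$ is a biholomorphism off $\mathrm{Exc}(\pi)$, the change of variables formula turns the hypothesis into $\int_{\Omega}|h|^{2}e^{-2\varphi}\,dV_{w}<+\infty$ and the desired conclusion into $\int_{\Omega'}|h|^{2}e^{-2\psi}\,dV_{w}<+\infty$ for a slightly smaller $\Omega'$; the cancellation of the factors $|J|^{\pm 2}$ arising here is exactly why the twist by $\mathcal{O}(-E)$ is needed — without it the inclusion $\mathcal{I}(\varphi\circ\pi)\subset\mathcal{I}(\psi\circ\pi)$ may genuinely fail. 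In the favourable case $\Omega=U\setminus Z$, i.e. when one works over $\pi^{-1}(U)$, the function $h$ is holomorphic on the complement of the codimension-$\geq 2$ set $Z$, hence extends holomorphically to $U$ by Riemann's extension theorem; the extension then lies in $\mathcal{I}(\varphi)_{x}\subset\mathcal{I}(\psi)_{x}$ by hypothesis, and pulling back and applying the change of variables once more yields $g\in\mathcal{I}(\psi\circ\pi)_{\tilde{x}}$.

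The step I expect to be the main obstacle is promoting this to an inclusion of sheaves: an arbitrary germ of $\mathcal{I}(\varphi\circ\pi)\otimes\mathcal{O}(-E)$ need not be represented by a section over $\pi^{-1}(U)$, so in general the region $\Omega$ above is only a ``wedge'', not a punctured neighbourhood of $x$, and then $h$ need not extend across $Z$. I would try to circumvent this either by a coherence argument — showing that the coherent quotient $\bigl(\mathcal{I}(\varphi\circ\pi)\otimes\mathcal{O}(-E)\bigr)\big/\bigl(\mathcal{I}(\varphi\circ\pi)\otimes\mathcal{O}(-E)\cap\mathcal{I}(\psi\circ\pi)\bigr)$ vanishes, reducing via a suitable twist and relative vanishing to germs that do extend over $\pi^{-1}(U)$ — or, more robustly, by a direct $L^{2}$ estimate on the Stein set $\Omega$: writing $\mathcal{I}(\varphi)|_{U}=(f_{1},\dots,f_{k})$ with each $\int_{U}|f_{j}|^{2}e^{-2\psi}\,dV_{w}<+\infty$, expressing $h=\sum_{j}a_{j}f_{j}$ by Cartan's Theorem B on $\Omega$, and controlling the growth of the coefficients $a_{j}$ towards $Z$ from $\int_{\Omega}|h|^{2}e^{-2\varphi}\,dV_{w}<+\infty$, if necessary via Skoda's $L^{2}$ division theorem. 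Finally, when $\varphi$ and $\psi$ have analytic singularities the whole argument collapses to a comparison of vanishing orders along prime divisors over $X$, with the discrepancy $K_{\widetilde{X}/X}$ exactly compensating the failure of $\pi^{-1}$ to preserve the inclusion of the associated ideals; this already covers the quasi-equisingular approximations used in the sequel.
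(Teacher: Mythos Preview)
Your overall strategy is right---the Jacobian twist is precisely there to make the change of variables work---but you correctly diagnose that your pushdown argument breaks down because a germ $g'\in\mathcal{I}(\varphi\circ\pi)_{\tilde x}$ only gives a holomorphic $h$ on a wedge $\Omega\subset U$, not on a full punctured neighbourhood of $x$, and the hypothesis $\mathcal{I}(\varphi)\subset\mathcal{I}(\psi)$ says nothing about such $h$. Your proposed Skoda/coherence fixes are plausible but you do not carry them out, so the proof as written remains incomplete.

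The paper bypasses the wedge issue entirely by invoking the known inclusion $\mathcal{I}(\varphi\circ\pi)\subset\pi^{*}\mathcal{I}(\varphi)$ (Proposition~14.3 in Demailly's \emph{Analytic methods in algebraic geometry}). This says that any $g'\in\mathcal{I}(\varphi\circ\pi)_{\tilde x}$ is already an $\mathcal{O}_{\widetilde X,\tilde x}$-linear combination $g'=\sum a_i\,(g_i\circ\pi)$ with $g_i\in\mathcal{I}(\varphi)_{x}$. Each $g_i$ is defined on a full neighbourhood of $x$, so there is no wedge; by hypothesis $g_i\in\mathcal{I}(\psi)_{x}$, and the straightforward change of variables $\int_{\widetilde V}|g_i\circ\pi|^{2}|J|^{2}e^{-2\psi\circ\pi}\,dV_{z}=\int_{\pi(\widetilde V)}|g_i|^{2}e^{-2\psi}\,dV_{w}<+\infty$ gives $J\,(g_i\circ\pi)\in\mathcal{I}(\psi\circ\pi)$, hence $Jg'\in\mathcal{I}(\psi\circ\pi)$. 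In effect, the inclusion $\mathcal{I}(\varphi\circ\pi)\subset\pi^{*}\mathcal{I}(\varphi)$ is exactly the ``extension across $Z$'' step you were trying to perform by hand, and your Skoda division idea is essentially one route to proving that inclusion; for the present lemma it is much simpler just to cite it.
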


\begin{proof}
It is known that $\mathcal{I}(\varphi\circ\pi)\subset \pi^{*}\mathcal{I}(\varphi)$
(cf. Proposition 14.3 of \cite{Dem}).
Then for any $f\in \mathcal{I}(\varphi\circ\pi)_{x}$, 
we have 
$$\int_{\pi(U_{x})}\pi_{*}(f)e^{-2\varphi}< +\infty ,$$
where $U_{x}$ is some open neighborhood of $x$ and $\pi(U_{x})$ is its image under $\pi$ 
which is not necessary open.
Combining with the condition $\mathcal{I}(\varphi)\subset \mathcal{I}(\psi)$, 
we get
$$\int_{\pi(U_{x})} \mid \pi_{*}f\mid^{2}e^{-2\psi}< +\infty . \leqno (1.1)$$
$(1.1)$ implies that
$$\int_{U_{x}}  \mid f \mid^{2} \mid J\mid^{2} e^{-2\psi\circ\pi}<
+\infty, \leqno (1.2)$$
where $J$ is Jacobian of $\pi$. 
Since $\mathcal{O}(-E)=J\cdot \mathcal{O}_{X}$, 
$(1.2)$ implies the lemma.
\end{proof}

Let $X$ be a compact Kähler manifold and let $T$ be a closed positive $(1,1)$-
current. 
It is well known that $T$ can be written as 
$$T=\theta+dd^{c}\varphi ,$$ 
where $\theta$ is a smooth $(1,1)$-closed form representing $[T]\in
H^{1,1}(X, \mathbb{R})$ and $\varphi$ is a quasi-psh function. 
Demailly's famous regularization theorem states that $\varphi$ can be
approximated by a sequence of quasi-psh functions with analytic singularities. 
We say that it is an analytic approximation of $\varphi$.
Among all these analytic approximations, we want to deal with those which keep
the information concerning the singularities of $T$. 
More precisely, we introduce the following definition.

\begin{defn}
Let $\theta+dd^{c}\varphi$ be a positive current, where $\theta$ is a
smooth form and $\varphi$ is a quasi-psh function on a compact Kähler manifold
$(X,\omega)$.
We say that $ \{\varphi_{k} \}_{k=1}^{\infty} $ is a quasi-equisingular
approximation of $\varphi$ for the current $\theta+dd^{c}\varphi$
if it satisfies the following conditions:

$(i)$ $ \{\varphi_{k} \}_{k=1}^{\infty}$ converge to
$ \varphi $ in $L^{1}$ topology and
$$\theta+dd^{c}\varphi_{k}\geq  - \tau_{k}\cdot \omega$$
for some constants $\tau_{k}\rightarrow 0$ as $k\rightarrow +\infty$.

$(ii)$ all $\varphi_{k}$ have analytic singularities and $\varphi_{k}\preccurlyeq\varphi_{k+1}$
for any $k$.

$(iii)$ For any $ \delta> 0$ and $m\in\mathbb{N}$, there exists $k_{0}
(\delta, m)\in \mathbb{N}$
such that
$$ \mathcal{I} (m (1 + \delta) \varphi_{k}) \subset \mathcal{I} (m \varphi)
\qquad\text{for every }k \geq k_{0} (\delta, m)$$
\end {defn}

\begin{remark}
By condition $(i)$, such type of approximation depends not only on $\varphi$ 
but also on the current $\theta+dd^{c}\varphi$.
\end{remark}

The existence of such quasi-equisingular approximations was proved in Theorem
2.2.1 of \cite{DPS 01} by a Bergman kernel method.
The choice of such approximations corresponds in some sense to the most singular
approximations asymptotically. The following proposition makes this assertion
more precise.

\begin {prop}
Let $\theta+dd^{c}\varphi_{1}, \theta+dd^{c}\varphi_{2}$ be two positive currents on a
compact Kähler
manifold $X$ as usual.
We assume that the quasi-psh function $\varphi_{2}$ is more singular than
$\varphi_{1}$.
Let $ \{\varphi_{i, 1} \}_{i=1}^{\infty} $ be an analytic approximation to
$\varphi_{1}$ and let $ \{\varphi_{i, 2} \}_{i=1}^{\infty} $ be a
quasi-equisingular
approximation to $\varphi_{2} $. 
For any closed smooth $(n-1,n-1)$-semi-positive form $u$,
we have
$$\lim\limits_{i\to \infty}\int_{X}(dd^{c}\varphi_{i,1})_{\ac}\wedge u\geq
\lim\limits_{i\to \infty} \int_{X}(dd^{c}\varphi_{i,2})_{\ac}\wedge u\leqno
(2.1)$$
where $(dd^{c}\varphi_{i,1})_{\ac}$ denotes the absolutely continuous part of the
current $dd^{c}\varphi_{i,1}$.
\end {prop}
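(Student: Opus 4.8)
The plan is to reduce $(2.1)$ to a statement about generic Lelong numbers along prime divisors, by evaluating each quantity $\int_{X}(dd^{c}\psi)_{\ac}\wedge u$ cohomologically.

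First I record the following. Let $\psi$ be any quasi-psh function with analytic singularities on $X$ with $\theta+dd^{c}\psi\geq-\tau\omega$. By Siu's decomposition of the almost positive current $\theta+dd^{c}\psi$ — and because the singularities of $\psi$ are analytic, so that a log-resolution turns $dd^{c}\psi$ into a divisor plus a smooth form and the exceptional components disappear under push-forward — one has
$$dd^{c}\psi=\sum_{Z}\nu(\psi,Z)\,[Z]+(dd^{c}\psi)_{\ac},$$
the sum running over the prime divisors $Z\subset X$, where $\nu(\psi,Z)$ is the generic Lelong number of $\psi$ along $Z$ and $(dd^{c}\psi)_{\ac}$ is a closed $L^{1}$ form. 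Pairing with the closed semi-positive form $u$ and using $\int_{X}dd^{c}\psi\wedge u=0$ (as $dd^{c}\psi$ is $dd^{c}$-exact), we obtain
$$\int_{X}(dd^{c}\psi)_{\ac}\wedge u=-\sum_{Z}\nu(\psi,Z)\int_{Z}u,$$
where each $\int_{Z}u=\{[Z]\}\cdot\{u\}\geq0$, and the series converges, being bounded by $(\{\theta\}+\tau\{\omega\})\cdot\{u\}$ (since $\sum_{Z}\nu(\psi,Z)[Z]\leq\theta+dd^{c}\psi+\tau\omega$). The analogous bound for $\varphi_{2}$, whose current is genuinely positive, gives $\sum_{Z}\nu(\varphi_{2},Z)\int_{Z}u\leq\{\theta\}\cdot\{u\}<\infty$.

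I then apply the identity to $\psi=\varphi_{i,1}$ and to $\psi=\varphi_{i,2}$. On the $\varphi_{1}$ side, Demailly's analytic approximation does not increase Lelong numbers, so $\nu(\varphi_{i,1},Z)\leq\nu(\varphi_{1},Z)$; and as $\varphi_{2}$ is more singular than $\varphi_{1}$, $\nu(\varphi_{1},Z)\leq\nu(\varphi_{2},Z)$. Hence for every $i$,
$$\int_{X}(dd^{c}\varphi_{i,1})_{\ac}\wedge u=-\sum_{Z}\nu(\varphi_{i,1},Z)\int_{Z}u\;\geq\;-\sum_{Z}\nu(\varphi_{2},Z)\int_{Z}u .$$
On the $\varphi_{2}$ side, condition $(ii)$ in the definition of a quasi-equisingular approximation makes $i\mapsto\nu(\varphi_{i,2},Z)$ non-decreasing and $\leq\nu(\varphi_{2},Z)$, while condition $(iii)$ supplies the matching lower bound: reading $\mathcal{I}(m(1+\delta)\varphi_{i,2})\subset\mathcal{I}(m\varphi_{2})$ at a generic point of $Z$ gives $\lfloor m(1+\delta)\nu(\varphi_{i,2},Z)\rfloor\geq\lfloor m\nu(\varphi_{2},Z)\rfloor$, so letting successively $i\to\infty$, $m\to\infty$, $\delta\to0$ yields $\lim_{i}\nu(\varphi_{i,2},Z)=\nu(\varphi_{2},Z)$ for each $Z$. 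By monotone convergence one interchanges $\lim_{i}$ and $\sum_{Z}$, whence
$$\lim_{i\to\infty}\int_{X}(dd^{c}\varphi_{i,2})_{\ac}\wedge u=-\sum_{Z}\nu(\varphi_{2},Z)\int_{Z}u .$$
Since for every $i$ the left-hand side of $(2.1)$ dominates $-\sum_{Z}\nu(\varphi_{2},Z)\int_{Z}u$, which is the right-hand limit just computed, inequality $(2.1)$ follows; and along the approximation the left-hand side is itself non-increasing in $i$ (the relevant Lelong numbers increasing to those of $\varphi_{1}$), so its limit exists and equals $-\sum_{Z}\nu(\varphi_{1},Z)\int_{Z}u$.

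The main obstacle is the equality $\lim_{i}\nu(\varphi_{i,2},Z)=\nu(\varphi_{2},Z)$: one must correctly turn the inclusion of multiplier ideals of condition $(iii)$ into the numerical inequality between generic Lelong numbers along a fixed $Z$, and then manage the three nested limits in $i$, $m$, $\delta$. The remaining ingredients are standard — the description of $(dd^{c}\psi)_{\ac}$ via Siu's decomposition for analytic singularities, the vanishing $\int_{X}dd^{c}\psi\wedge u=0$, the non-increase of Lelong numbers under Demailly's regularization, the monotone-convergence interchange, and $\int_{Z}u\geq0$ for semi-positive $u$. The conceptual content is that, once the divisorial part is split off, the residual closed $L^{1}$ form pairs with $u$ only through its cohomology class, so the codimension $\geq2$ singularities drop out and $(2.1)$ reduces to the transparent fact that less singular potentials carry less divisorial mass.
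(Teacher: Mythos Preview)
Your argument is correct and takes a genuinely different route from the paper's. The paper fixes $s$, passes to a log resolution $\pi:\widehat X\to X$ of $\varphi_{s,1}$ (so that $dd^c(\varphi_{s,1}\circ\pi)=[F]+C^\infty$ with $F$ SNC), and then uses Lemma~2.1 to transport the inclusion $\mathcal I(m(1+\delta)\varphi_{k,2})\subset\mathcal I(m\varphi_{s,1})$ upstairs; on $\widehat X$ this reads off directly as a positivity statement for $m(1+\delta)dd^c(\varphi_{k,2}\circ\pi)+[J]-\lfloor mF\rfloor$, which after integrating against $\pi^*u$ and letting $m\to\infty,\ \delta\to0$ gives $(2.2)$. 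You instead reduce everything to generic Lelong numbers along prime divisors via the identity $\int_X(dd^c\psi)_{\ac}\wedge u=-\sum_Z\nu(\psi,Z)\int_Z u$ for analytic $\psi$, then compare these numbers using the valuative content of condition~(iii). Your route is more conceptual --- it isolates the divisorial invariants and makes transparent \emph{why} codimension~$\ge 2$ phenomena are irrelevant --- while the paper's route is more self-contained, avoiding the valuative lemma by working directly with the SNC divisor $F$ on one fixed resolution.

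Two small remarks. First, your displayed inequality $\lfloor m(1+\delta)\nu(\varphi_{i,2},Z)\rfloor\ge\lfloor m\nu(\varphi_2,Z)\rfloor$ is not quite what condition~(iii) yields: writing $c_i=\nu(\varphi_{i,2},Z)$ and $c_2=\nu(\varphi_2,Z)$, the inclusion at a generic point of $Z$ gives $g^{\lfloor m(1+\delta)c_i\rfloor}\in\mathcal I(m\varphi_2)$, and since (by the Siu decomposition) $e^{-2m\varphi_2}\ge C|g|^{-2mc_2}$ near $Z$, one only gets $\lfloor m(1+\delta)c_i\rfloor>mc_2-1$. This is harmless for the triple limit $i\to\infty$, $m\to\infty$, $\delta\to 0$. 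Second, you invoke ``Demailly's analytic approximation does not increase Lelong numbers'' to get $\nu(\varphi_{i,1},Z)\le\nu(\varphi_1,Z)$; note that this is the same implicit hypothesis $\varphi_{s,1}\preccurlyeq\varphi_1$ the paper itself uses in its proof, so you are not assuming anything extra.
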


\begin{proof}

It is enough to show that
$$\int_{X} (dd^{c}\varphi_{s,1})_{\ac} \wedge u \geq \lim \limits_{i \to \infty
} \int_ {X} (dd^{c}\varphi_{i,2})_{\ac} \wedge u \leqno (2.2)$$
for any $s\in \mathbb{N}$ fixed.
Since $ \{\varphi_{i, 2} \}_{i=1}^{\infty} $ is a quasi-equisingular
approximation to $\varphi_{2}$, 
for any $\delta> 0$ and $m \in \mathbb{N}$, 
there exists a $k_{0} (\delta, m)\in\mathbb{N} $ such that 
$$ \mathcal{I} (m (1 + \delta) \varphi_{k,2}) \subset \mathcal{I} (m
\varphi_{2})\qquad\text{for every } k \geq k_{0} (\delta,
m).\leqno (2.3)$$
Since $\varphi_{1}\preccurlyeq \varphi_{2}$ and 
$\varphi_{s,1}\preccurlyeq \varphi_{1}$,
$(2.3)$ implies that
$$\mathcal{I} (m (1 + \delta) \varphi_{k,2}) \subset \mathcal{I} (m \varphi_{s,1})\leqno (2.4)$$
for any $s\in \mathbb{N}$ fixed and $k \geq k_{0} (\delta,
m)$.

Using $(2.4)$, we begin to prove $(2.2)$. 
Let $\pi: \widehat{X}\rightarrow X$ be a log resolution of $ \varphi_{s,1} $.
We can thus assume that
$$dd^{c}\varphi_{s,1}\circ\pi=[F] + C^{\infty} ,$$ 
where $ F $ is a $ \mathbb{R}$-normal crossing divisor.
Using Lemma 2.1, $(2.4)$ implies that
$$\mathcal{I}(m(1+\delta)\varphi_{k,2}\circ\pi)\otimes \mathcal{O}(-J)\subset
\mathcal{I}(m\varphi_{s,1}\circ\pi)=\mathcal{O}(-\lfloor mF\rfloor)\leqno
(2.5)$$
for $k\geq k_{0}(\delta, m)$, where $J$ is the Jacobian of the blow up $\pi$.
Since $F$ is a normal crossing divisor, 
$(2.5)$ implies that  $m(1+\delta)dd^{c}\varphi_{k,2}\circ\pi+[J]-\lfloor mF\rfloor$ is
a positive current.
Then
$$\int_{\widehat{X}}(dd^{c}m(1+\delta)\cdot\varphi_{k,2}\circ\pi)_{\ac}\wedge u\leq
C+\int_{\widehat{X}}(dd^{c}m\cdot\varphi_{s,1}\circ\pi)_{\ac}\wedge u $$
for $k\geq k_{0}(\delta, m)$, where the constant $C$ is independent of $m$ and $k$.
Therefore, if $m\rightarrow +\infty$, we get
$$\int_{\widehat{X}}(dd^{c}\varphi_{k,2}\circ\pi)_{\ac}\wedge u\leq
O(\frac{1}{m})+ C_{1}
\delta+\int_{\widehat{X}}(dd^{c}\varphi_{s,1}\circ\pi)_{\ac}\wedge u \leqno
(2.6)$$
for $k\geq k_{0}(\delta, m)$ and a constant $C_{1}$ independent of $m$ and $k$.
Then
$$\int_{X}(dd^{c}\varphi_{k,2})_{\ac}\wedge u\leq
O(\frac{1}{m})+ C_{1}
\delta+\int_{X}(dd^{c}\varphi_{s,1})_{\ac}\wedge u $$
for $k\geq k_{0}(\delta, m)$.
Letting $ m \rightarrow + \infty $ and $ \delta \rightarrow 0$, 
we get
$$\lim\limits_{k\to \infty} \int_{X}(dd^{c}\varphi_{k,2})_{\ac}\wedge
u\leq\int_{X}(dd^{c}\varphi_{s,1})_{\ac}\wedge u .$$
Thus $(2.2)$ is proved.

\end{proof}

\begin{remark}
Assume that $\{\varphi_{i,1}\},
\{\varphi_{i,2}\}$ are two quasi-equisingular approximations to the same
$\varphi$.
Proposition 2.2 implies in particular that 
$$\lim\limits_{i\to \infty}\int_{X}(dd^{c}\varphi_{i,1})_{\ac}\wedge u=
\lim\limits_{i\to \infty} \int_{X}(dd^{c}\varphi_{i,2})_{\ac}\wedge u.$$
\end{remark}

Thanks to Proposition 2.2 and the above remark, 
we can define a related cohomological product of closed positive $(1,1)$-currents.

\begin{defn}
Let $T_{1},\cdots,T_{k}$ be closed
positive $(1,1)$-currents on a compact Kähler manifold. 
We write them by the potential forms
$T_{i}=\theta_{i}+dd^{c}\varphi_{i}$ as usual.
Let $\{\varphi_{i,j}\}_{j=1}^{\infty}$ be a quasi-equisingular approximation to
$\varphi_{i}$. 
Then we can define a product
$$\langle T_{1}, T_{2},\cdots,T_{k} \rangle$$
as an element in $H^{k,k}_{\geq 0}(X)$ such that for all $u\in H^{n-k,n-k}(X)$,
$$\langle T_{1}, T_{2},\cdots,T_{k} \rangle\wedge u$$
$$=\lim\limits_{j\to
\infty}\int_{X}(\theta_{1}+dd^{c}\varphi_{1,j})_{\ac}\wedge\cdots\wedge
(\theta_{k}+dd^{c}\varphi_{k,j})_{\ac}\wedge u$$ 
where $\wedge$ is the usual wedge product in cohomology.
\end{defn}

\begin{remark}
Thanks to Proposition 2.2 and its remark, 
the product defined here does not depend on the choice of quasi-equisingular approximation. 
Moreover the product here is smaller than the product defined by any other analytic approximations.
\end{remark}

\section{Numerical dimension}

Using the product defined in the last section, we can give our
definition of the numerical dimension. 

\begin{defn} 
Let $(L,\varphi)$ be a pseudo-effective line
bundle on a compact Kähler manifold $X$ such that $\frac{i}{2\pi}\Theta_{\varphi}(L)\geq 0$. 
We define the numerical dimension $ \nd (L, \varphi) $ as the largest $ v \in
\mathbb {N} $, 
such that $\langle (i\Theta_{\varphi})^{v}\rangle\neq 0$,
where the cohomological product $\langle (i\Theta_{\varphi})^{v}\rangle$ is the
$v$-fold product of $i\Theta_{\varphi}(L)$ defined in Definition 3.
\end{defn}

Let $(L,\varphi)$ be a pseudo-effective line bundle on $X$ of dimension $n$ and
$ \nd (L, \varphi) = n $. 
If moreover $ \varphi $ is a quasi-psh function with analytic singularities, 
it is not difficult to see that
$$\frac{h^{0}(X,mL\otimes\mathcal{I}(m\varphi))}{m^{n}}$$ 
admits a strictly positive limit
by using the Riemann-Roch formula. 
When $ \varphi $ is just a quasi-psh function, H.Tsuji conjectured in \cite{Tsu} that
$$\frac{h^{0}(X,mL\otimes\mathcal{I}(m\varphi))}{m^{n}}$$
admits also a strictly positive limit. 
The main goal of this section is to prove that if $\nd(L,\varphi)=n$, then
$$\varliminf\limits_{m\to \infty}\frac{h^{0}(X,mL\otimes\mathcal{I}(m\varphi))}{m^{n}}>0.$$
\vspace{10pt}

For the more precise estimates, 
we first need to explain the construction of quasi-equisingular approximations by
a Bergman Kernel method. 
Before doing this, we first prove a useful estimate which is essentially
proved in \cite{DP} in a more general situation. 
For the sake of completeness, we give the proof here with a more precise
estimate.

\begin{lemme}
Let $A$ be a sufficiently large very ample line bundle on a projective manifold $X$ 
and let $(L,\varphi)$ be a pseudo-effective line bundle. 
Let $\varphi_{m}$ be the metric on $L$ constructed by the Bergman Kernel of
$H^{0}(X, A+mL\otimes\mathcal{O}(m\varphi))$ with respect to the metric $m\varphi$.
Then
$$\mathcal{I}(\frac{sm}{m-s}\varphi_{m})\subset \mathcal{I}(s\varphi)\qquad
\text{for any}\hspace{5 pt} m,s\in \mathbb{N} .$$
\end{lemme}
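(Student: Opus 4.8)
The plan is to deduce the inclusion from a single interpolation inequality (Hölder, equivalently the weighted arithmetic--geometric mean), once the Bergman kernel metric $\varphi_m$ has been written out through an orthonormal basis. Following the idea of \cite{DP}, I would first fix an orthonormal basis $\{\sigma_j\}$ of $H^0(X,A+mL\otimes\mathcal{O}(m\varphi))$ for the $L^2$-inner product attached to the weight $m\varphi$, normalized compatibly with the convention $\mathcal{I}(\psi)_x=\{f:\int|f|^2e^{-2\psi}<+\infty\}$ used above, so that the associated Bergman kernel metric on $L$ is $\varphi_m$ with $e^{2m\varphi_m}=\sum_j|\sigma_j|^2$ up to a bounded factor coming from the fixed smooth data on $A$ and from the local trivializations. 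Since $A$ is sufficiently ample, $N_m:=\dim H^0(X,A+mL\otimes\mathcal{O}(m\varphi))>0$, so $\varphi_m$ is a genuine, locally integrable metric; and integrating the identity $e^{2m\varphi_m}=\sum_j|\sigma_j|^2$ over $X$, using that each $\sigma_j$ is $L^2$ against $e^{-2m\varphi}$, gives the single global input I need:
\[
\int_X e^{2m(\varphi_m-\varphi)}\,dV\;\leq\;C_A\,N_m\;<\;+\infty ,
\]
where $C_A$ depends only on $X$, the metrics and $A$, and not on $m$.

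I would then reduce everything to a pointwise computation. Here $m>s$ is implicit (otherwise $\tfrac{sm}{m-s}$ is undefined), so set $\theta:=\tfrac{m-s}{m}\in(0,1)$. Fix $x\in X$ and a holomorphic germ $f\in\mathcal{I}(\tfrac{sm}{m-s}\varphi_m)_x$, represented on a sufficiently small relatively compact neighbourhood $U$ of $x$; then $\int_U|f|^2e^{-2\frac{sm}{m-s}\varphi_m}<+\infty$ and $f$ is bounded on $U$, and the goal is $\int_U|f|^2e^{-2s\varphi}<+\infty$. The crux is the elementary pointwise identity
\[
|f|^2e^{-2s\varphi}\;=\;\bigl(|f|^2e^{-2\frac{sm}{m-s}\varphi_m}\bigr)^{\theta}\cdot\bigl(|f|^2e^{2m(\varphi_m-\varphi)}\bigr)^{1-\theta},
\]
verified by matching the exponents of $|f|^2$ (they sum to $1$), of $\varphi_m$ (they cancel, since $\theta\cdot\tfrac{sm}{m-s}=(1-\theta)m$), and of $\varphi$ (since $(1-\theta)m=s$). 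Applying $a^{\theta}b^{1-\theta}\leq\theta a+(1-\theta)b$ and integrating over $U$ gives
\[
\int_U|f|^2e^{-2s\varphi}\;\leq\;\theta\int_U|f|^2e^{-2\frac{sm}{m-s}\varphi_m}\;+\;(1-\theta)\,(\sup_U|f|^2)\int_X e^{2m(\varphi_m-\varphi)}\,dV ,
\]
both terms on the right being finite, by the hypothesis on $f$ and by the global bound respectively; hence $f\in\mathcal{I}(s\varphi)_x$, and letting $x$ vary proves the inclusion.

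I expect the only genuinely delicate point to be the bookkeeping in the first step: pinning down the normalization of the Bergman kernel metric $\varphi_m$ relative to the paper's conventions for singular metrics and multiplier ideals, and checking that the auxiliary ample bundle $A$ contributes only the $m$-independent constant $C_A$ (and that its fixed metric makes the relevant space of sections nonzero). It is worth noting that this normalization is precisely what makes the constant $\tfrac{sm}{m-s}$ — equivalently the split $\theta=\tfrac{m-s}{m}$ — come out exactly; by contrast the interpolation itself is purely formal and uses neither the Ohsawa--Takegoshi extension theorem nor the fact that $\varphi_m$ is less singular than $\varphi$ up to an $O(1/m)$ error. Those facts are needed only later, when this estimate is applied to construct quasi-equisingular approximations, which is where the ampleness of $A$ is really exploited.
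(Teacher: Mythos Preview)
Your proof is correct and follows essentially the same approach as the paper: both arguments rest on the single global input $\int_X e^{2m(\varphi_m-\varphi)}\,dV = h^0(X,A+mL\otimes\mathcal{I}(m\varphi))<+\infty$, and then interpolate locally between the two weights $\frac{sm}{m-s}\varphi_m$ and $m(\varphi-\varphi_m)$ to control $e^{-2s\varphi}$. The only cosmetic difference is that the paper carries out the interpolation by splitting $U_x$ into the two regions $\{s\varphi\leq\frac{sm}{m-s}\varphi_m\}$ and its complement (on each of which one of the two weights dominates pointwise), whereas you use the equivalent AM--GM/H\"older form $a^{\theta}b^{1-\theta}\leq\theta a+(1-\theta)b$; the resulting bounds are identical.
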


\begin{proof}

First of all, we have the following estimate on $X$:
$$\int_{s\cdot\varphi(x)\leq\frac{sm}{m-s}\cdot\varphi_{m}(x)}
e^{-2s\cdot\varphi(x)}=\int_{s\cdot\varphi(x)\leq\frac{sm}{m-s}\cdot\varphi_{m}(
x)}
e^{2(m-s)\cdot\varphi(x)-2m\cdot\varphi(x)}$$
$$\leq \int_{X} e^{2m\cdot\varphi_{m}} e^{-2m\cdot\varphi} =h^{0}(X,
A+mL\otimes\mathcal{I}(m\varphi))< +\infty .$$
Using the above finiteness, for any
$f\in\mathcal{I}(\frac{sm}{m-s}\varphi_{m})_{x}$, we have
$$\int_{U_{x}} \mid f\mid^{2}e^{-2s\varphi}\leq
\int_{\varphi\leq\frac{sm}{m-s}\varphi_{m}}\mid
f\mid^{2}e^{-2s\varphi}+\int_{U_{x}}\mid
f\mid^{2}e^{-\frac{2sm}{m-s}\varphi_{m}}$$
$$\leq \sup \mid f\mid^{2}\cdot\int_{\varphi\leq\frac{sm}{m-s}\varphi_{m}}
e^{-2s\varphi}+\int_{U_{x}}\mid f\mid^{2}e^{-2\frac{sm}{m-s}\varphi_{m}}<+\infty
.$$ 
Then $f\in\mathcal{I}(s\varphi)$. The lemma is proved.
\end{proof}

We are going to construct a quasi-equisingular approximation to $\varphi$.
Such type of approximations were implicitly constructed
in\cite{DPS 01} for the local case.
Philosophically, 
we can generalize the local properties to the global case after tensoring by $ K_ {X} $. 
The Ohsawa-Takegoshi extension theorem and the Skoda theorem are two typical examples.
In fact, at least in the projective case,  when $ S $ is a smooth zero divisor of
a section of an ample line bundle then $ X \setminus S $ is Stein. 
So we can reduce the proof to the Stein case.
The reason for tensoring with $ K_{X} $ is that the $ L^{2} $ integral is
globally well defined after tensoring by $ K_{X} $.
\vspace{5 pt}

\begin{prop}
Let $ X $ be a projective variety of dimension $ n $ and let $\omega$ be
a Kähler metric in $H^{1,1}(X,\mathbb{Q})$.
Let $ (L, \varphi) $ 
(i.e. the metric on $L$ is $h_{0}e^{-\varphi}$ for some smooth metric $h_{0}$ and quasi-psh function $\varphi$) 
be a pseudo-effective line bundle on $X$ such that $ \nd
(L, \varphi) = n $.

Let $ (G , h_{G} )$ be an ample line bundle on $ X $ with smooth metric $h_{G}$, 
such that the curvature $i\Theta_{h_{G}}(G)$ is positive and sufficiently large (e.g. $G$ very ample and $G-K_{X}$ ample).  
Let $\{\tau_{p, q, i}\}_{i}$ be an orthonormal basis of 
$$H^{0}(X, 2^{p}G+2^{q}L\otimes\mathcal{I}(2^{q}\varphi))$$
with respect to the singular metric $h_{G}^{2^{p}}\cdot h_{0}^{2^{q}}\cdot e^{-2^{q}\varphi}$.
We define
$$\varphi_{p, q}
=\frac{1}{2^{q}}\ln \sum_{i} |\tau_{p, q , i}|^{2}_{h_{G}^{2^{p}}\cdot h_{0}^{2^{q}}}.$$
Then there exist two increasing integral sequences $p_{m}\rightarrow +\infty$ and $q_{m}\rightarrow +\infty$ with
$$\lim (q_{m} - p_{m})=+\infty \text{ and } q_{m}-q_{m-1}\geq p_{m}-p_{m-1} \text{ for all } m\in \mathbb{N}, $$
such that $\{ \varphi_{p_{m}, q_{m}}\}_{m=1}^{+\infty}$ is a quasi-equisingular approximation to $\varphi$ 
for the current 
$\frac{i}{2\pi}\Theta_{h_{0}}(L)+dd^{c}\varphi$.
We denote $\varphi_{p_{m}, q_{m}}$ by $\varphi_{m}$ for simplicity.

Moreover, $\{\varphi_{m}\}$ satisfies the following two properties:
$$H^{0}(X, 2^{p_{m}}G+2^{q_{m}}L\otimes\mathcal{I}(2^{q_{m}}\varphi_{m}))\leqno (i)$$
$$=H^{0}(X, 2^{p_{m}}G+2^{q_{m}}L\otimes\mathcal{I}(2^{q_{m}}\varphi))$$
for every $m\in \mathbb{N}^{+}$.

\hspace{-12pt}$(ii)$ There exists a constant $C>0$ independent of $G$, $ s_{0}$, such that 
for all $ \epsilon> 0$, we have
$$\int_{X}(\frac{i}{2\pi}\Theta_{\varphi_{m}}(L)+\epsilon\omega)_{\ac} ^{n}> C $$
for $ m\geq m_{0}(\epsilon)$.
\end{prop}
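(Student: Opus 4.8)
The plan is to run the Bergman-kernel regularization of \cite{DPS 01} with the extra twist $2^{p}G$, and to extract from the double family $\{\varphi_{p,q}\}$ a dyadic subsequence $\varphi_{m}:=\varphi_{p_{m},q_{m}}$ satisfying the three conditions of Definition 2; property $(ii)$ is then read off from the hypothesis $\nd(L,\varphi)=n$ via Definition 3. First I would record the elementary facts: writing $\theta=\frac{i}{2\pi}\Theta_{h_{0}}(L)$ and $\gamma=\frac{i}{2\pi}\Theta_{h_{G}}(G)$, each $\varphi_{p,q}$ has analytic singularities, and since $\sum_{i}|\tau_{p,q,i}|^{2}_{h_{G}^{2^{p}}h_{0}^{2^{q}}}$ is a squared norm of sections of $2^{p}G+2^{q}L$ for the smooth metric $h_{G}^{2^{p}}h_{0}^{2^{q}}$, one has $\theta+dd^{c}\varphi_{p,q}\geq -\tfrac{2^{p}}{2^{q}}\gamma\geq -C_{G}\tfrac{2^{p}}{2^{q}}\omega$. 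The Ohsawa--Takegoshi theorem, applied to extend a germ from a point where $\varphi>-\infty$, yields $\varphi_{p,q}\geq\varphi-\tfrac{C_{1}}{2^{q}}$ almost everywhere: the metric $h_{G}^{2^{p}}h_{0}^{2^{q}}e^{-2^{q}\varphi}$ on $2^{p}G+2^{q}L$ has curvature $\geq\gamma>0$ and $G-K_{X}$ is ample, so the extension constant is uniform in $p\geq 1$ and $q$. The sub-mean value inequality for the $\tau_{p,q,i}$, together with $\|\tau_{p,q,i}\|=1$, gives the matching bound $\varphi_{p,q}(x)\leq\sup_{B(x,r)}\varphi+\tfrac{1}{2^{q}}\log\tfrac{C_{2}}{r^{2n}}+C_{3}\tfrac{2^{p}}{2^{q}}$ for every small $r$.

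Next I would fix increasing integers $p_{m}\to\infty$, $q_{m}\to\infty$ with $q_{m}-p_{m}\to\infty$ and $q_{m}-q_{m-1}\geq p_{m}-p_{m-1}$. Condition $(i)$ of Definition 2 is immediate from the curvature bound, with $\tau_{m}=C_{G}2^{p_{m}-q_{m}}\to 0$; the $L^{1}$-convergence $\varphi_{m}\to\varphi$ follows by letting $m\to\infty$ and then $r\to 0$ in the two estimates above, using upper semicontinuity of $\varphi$ and the Hartogs lemma for the almost-psh family $\{\varphi_{m}\}$. Condition $(iii)$ follows from the Skoda-type estimate of Lemma 3.1, adapted to the twisted spaces $H^{0}(2^{p}G+2^{q}L\otimes\mathcal{I}(2^{q}\varphi))$: it gives $\mathcal{I}\bigl(\tfrac{\mu\,2^{q_{m}}}{2^{q_{m}}-\mu}\varphi_{m}\bigr)\subset\mathcal{I}(\mu\varphi)$, and since $\tfrac{\mu\,2^{q_{m}}}{2^{q_{m}}-\mu}\downarrow\mu$ as $m\to\infty$, for $m$ large one has $\mu(1+\delta)\geq\tfrac{\mu\,2^{q_{m}}}{2^{q_{m}}-\mu}$, so $\mathcal{I}(\mu(1+\delta)\varphi_{m})\subset\mathcal{I}(\mu\varphi)$, which is $(iii)$. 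The monotonicity $\varphi_{m}\preccurlyeq\varphi_{m+1}$ of $(ii)$ is where the dyadic structure is essential: using the extremal (Bergman) characterization of $\sum_{i}|\tau_{p,q,i}|^{2}$ and the submultiplicativity of norms under tensor products, one raises a section to the power $2^{q_{m+1}-q_{m}}$ and pads it with sections of $G$ — the inequality $q_{m+1}-q_{m}\geq p_{m+1}-p_{m}$ being exactly what makes the $G$- and $L$-degrees fit — and the lower bound $\varphi_{m}\geq\varphi-C_{1}2^{-q_{m}}$ converts the $L^{2}$-normalisations into the required pointwise inequality.

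Property $(i)$ of the proposition then splits as follows: $\supseteq$ is immediate from $\varphi_{m}\preccurlyeq\varphi$ (so $\mathcal{I}(2^{q_{m}}\varphi)\subset\mathcal{I}(2^{q_{m}}\varphi_{m})$), while for $\subseteq$ one takes $G$ ample enough that $\mathcal{O}(2^{p_{m}}G+2^{q_{m}}L)\otimes\mathcal{I}(2^{q_{m}}\varphi)$ is globally generated (Castelnuovo--Mumford regularity), so that the $\tau_{m,i}$ generate $\mathcal{I}(2^{q_{m}}\varphi)$ locally and a division argument (as in \cite{DPS 01}) shows that any global section lying in $H^{0}(2^{p_{m}}G+2^{q_{m}}L\otimes\mathcal{I}(2^{q_{m}}\varphi_{m}))$ already lies in $H^{0}(2^{p_{m}}G+2^{q_{m}}L\otimes\mathcal{I}(2^{q_{m}}\varphi))$. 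For property $(ii)$, since $\varphi_{m}$ has analytic singularities one has $\bigl(\tfrac{i}{2\pi}\Theta_{\varphi_{m}}(L)+\epsilon\omega\bigr)_{\ac}=\beta_{m}+\epsilon\omega$ with $\beta_{m}:=(\theta+dd^{c}\varphi_{m})_{\ac}$ smooth, $\beta_{m}\geq -\tau_{m}\omega$, and $[\beta_{m}]=[\theta]-[D_{m}]$ for an effective divisor $D_{m}$; intersecting with $[\omega]^{n-1}$ shows $[\beta_{m}],[D_{m}]$ stay bounded. By Definition 3 and the remark after Proposition 2.2 (the product is independent of the chosen quasi-equisingular approximation, and $\{\varphi_{m}\}$ is one by the above), $\int_{X}\beta_{m}^{n}\to\langle(i\Theta_{\varphi})^{n}\rangle$, which is strictly positive precisely because $\nd(L,\varphi)=n$. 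Expanding $([\beta_{m}]+\epsilon[\omega])^{n}$ and using that $[\beta_{m}]+\tau_{m}[\omega]$ is nef, the cross terms are $\geq -C\tau_{m}(1+\epsilon)^{n}$, so $\int_{X}\bigl(\tfrac{i}{2\pi}\Theta_{\varphi_{m}}(L)+\epsilon\omega\bigr)_{\ac}^{n}\geq\int_{X}\beta_{m}^{n}-C\tau_{m}(1+\epsilon)^{n}\to\langle(i\Theta_{\varphi})^{n}\rangle$, so the bound holds with $C=\tfrac12\langle(i\Theta_{\varphi})^{n}\rangle$, which depends only on $(L,\varphi)$ and not on $G$.

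The hardest step is the simultaneous verification of the monotonicity in $(ii)$ and of property $(i)$ along one and the same sequence. The dyadic scaling and the constraint $q_{m}-q_{m-1}\geq p_{m}-p_{m-1}$ are forced by the need to match $G$- and $L$-degrees in the tensor-product comparison of Bergman kernels, and the delicate point is upgrading the $L^{2}$-extremal characterizations to the pointwise, respectively sheaf-theoretic, statements $\varphi_{m}\preccurlyeq\varphi_{m+1}$ and $H^{0}(2^{p_{m}}G+2^{q_{m}}L\otimes\mathcal{I}(2^{q_{m}}\varphi_{m}))=H^{0}(2^{p_{m}}G+2^{q_{m}}L\otimes\mathcal{I}(2^{q_{m}}\varphi))$; this is exactly where the uniformity in $p,q$ of the Ohsawa--Takegoshi constant and Lemma 3.1 are indispensable. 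Once this is in place, the $G$-independence of the constant in $(ii)$ is simply the statement that $\langle\,\cdot\,\rangle$ does not see the chosen approximation.
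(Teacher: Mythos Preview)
Your overall architecture matches the paper's: Bergman-kernel regularization twisted by $2^{p}G$, Lemma~3.1 for condition~(iii) of Definition~2, and property~(ii) read off from $\nd(L,\varphi)=n$ via Definition~3. Those parts are fine, and your treatment of property~(ii) is actually more explicit than the paper's one-line appeal to the definition.

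The gap is in the monotonicity $\varphi_{m}\preccurlyeq\varphi_{m+1}$. Your proposal --- raise a level-$m$ section to the power $2^{q_{m+1}-q_{m}}$ and pad with sections of~$G$ --- goes in the wrong direction. Producing a specific element of the level-$(m+1)$ space gives only a \emph{lower} bound on the level-$(m+1)$ Bergman kernel, whereas $\varphi_{m}\preccurlyeq\varphi_{m+1}$ means $\varphi_{m+1}\leq\varphi_{m}+C$, which is an \emph{upper} bound on that kernel. Moreover, the $L^{2}$ norm $\|f^{k}\|^{2}=\int\bigl(|f|^{2}e^{-2\cdot 2^{q_{m}}\varphi}\bigr)^{k}$ is not controlled by $\|f\|=1$ (an $L^{1}$ function to a power need not be $L^{1}$), and the bound $\varphi_{m}\geq\varphi-C_{1}2^{-q_{m}}$ only bounds $e^{-\varphi}$ from below, so it does not rescue this. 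Finally, with $k=2^{q_{m+1}-q_{m}}$ the $G$-degree of $f^{k}$ is $2^{p_{m}+q_{m+1}-q_{m}}\geq 2^{p_{m+1}}$, so there is no room to ``pad with~$G$''; you would need to remove~$G$.

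The paper's device is Ohsawa--Takegoshi extension from the diagonal $\Delta\subset X\times X$. Given $g\in H^{0}(X,2^{p}G+2^{q}L\otimes\mathcal{I}(2^{q}\varphi))$ with $\|g\|=1$, one extends $g$ (viewed on~$\Delta$) to a section of $\pi_{1}^{*}(2^{p-1}G+2^{q-1}L)\otimes\pi_{2}^{*}(2^{p-1}G+2^{q-1}L)$ on $X\times X$; restricting back to~$\Delta$ yields $g(z)=\sum_{i,j}c_{ij}f_{i}(z)f_{j}(z)$ with $\sum|c_{ij}|^{2}\leq C_{1}$, where $\{f_{i}\}$ is an orthonormal basis at level $(p-1,q-1)$. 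Cauchy--Schwarz gives $|g(z)|^{2}\leq C_{1}\bigl(\sum_{i}|f_{i}(z)|^{2}\bigr)^{2}$, hence $\varphi_{p,q}\leq\varphi_{p-1,q-1}+O(2^{-q})$. Together with the trivial $\varphi_{p,q-1}\preccurlyeq\varphi_{p-1,q-1}$ (multiply by a nonvanishing section of~$G$), one gets $\varphi_{p,q-1}\preccurlyeq\varphi_{p,q}$; iterating both moves under the constraint $q_{m}-q_{m-1}\geq p_{m}-p_{m-1}$ yields $\varphi_{m-1}\preccurlyeq\varphi_{m}$. This diagonal trick is the missing idea; ``submultiplicativity of norms under tensor products'' does not substitute for it, because restriction from $X\times X$ to~$\Delta$ is not $L^{2}$-bounded without the extension theorem.
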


\begin{proof}

By Theorem 13.21, 13.23 of \cite{Dem}, 
there exists two squences $p_{m}\rightarrow +\infty$ and $q_{m}\rightarrow +\infty$ with
$$\lim q_{m}/ p_{m}=+\infty \text{ and } q_{m}-q_{m-1}\geq p_{m}-p_{m-1} \text{ for all } m\in \mathbb{N}, $$
such that $\{ \varphi_{m}\}$ is an analytic approximation to $\varphi$
for the current $\frac{i}{2\pi}\Theta_{h_{0}}(L)+dd^{c}\varphi$, i.e. 
it satisfies property $(i)$ in Definition 2.
Lemma $3.1$ implies that $\{ \varphi_{m}\}$ satisfies property $(iii)$ in Definition 2. 
To prove that $\{ \varphi_{m}\}$ is a quasi-equisingular approximation, 
it is enough to prove property $(ii)$ in Definition 2.

We first prove that
$$\varphi_{p-1, q-1}\preccurlyeq \varphi_{p, q}\qquad \text{and}
\qquad \varphi_{p, q-1}\preccurlyeq \varphi_{p-1, q-1} .\leqno (2.1)$$
Let $ \Delta $ be the diagonal of $X\times X$ and $\pi_{1},\pi_{2}$ the two
direction projections from $X\times X$ to $X$.
We define a new bundle on $X\times X$:
$$F=2^{p-1}\pi_{1}^{*}G+2^{p-1}\pi_{2}^{*}G+2^{q-1}\pi_{1}^{*}
L+2^{q-1}\pi_{2}^{*}L$$ 
with the singular metric
$$2^{q-1}\pi_{1}^{*}(\varphi)+2^{q-1}\pi_{2}^{*}(\varphi).$$
Since $2^{p-1}G-K_{X}$ is enough ample, we can apply the Ohsawa-Takegoshi extension
theorem from $\Delta$ to $X\times X$ for the line bundle $F$.
Thus the following map is surjective:
$$(H^{0}(X, (2^{p-1}G+2^{q-1}L)\otimes\mathcal{I}(2^{q-1}\varphi)))^{2} \leqno (2.2)$$
$$\rightarrow H^{0}(X, 2^{p}G+2^{q}L\otimes\mathcal{I}(2^{q}\varphi)).$$
Let $ \{f_{p-1, q-1 , i} \}_{i=1}^{N} $ be an orthonormal basis of
$$H^{0}(X, 2^{p-1}G+2^{q-1}L\otimes\mathcal{I}(2^{q-1}\varphi))$$
with respect to the singular metric $h_{G}^{2^{p-1}}\cdot h_{0}^{2^{q-1}}\cdot e^{- 2^{q-1}\varphi}$.
For any 
$$g\in H^{0}(X, 2^{p}G+2^{q}L\otimes\mathcal{I}(2^{q}\varphi)) ,$$
applying the effective version of Ohsawa-Takegoshi extension theorem to the morphism $(2.2)$,
we obtain the 
existence of constants $ \{c_{i, j} \} $ such that
$$g(z)=(\sum\limits_{i,j} c_{i,j} f_{p-1, q-1 ,i}(z) f_{p-1, q-1 ,j} (w) )\arrowvert _{z=w}$$
with
$$\sum\limits_{i,j} \lvert c_{i,j}\rvert^{2}\leq C_{1} \|g\|^{2}$$
where $ C_{1} $ depends only on $ X $ and $\|g\| $ is the $L^{2}$-norm with
respect to the singular metric $h_{G}^{2^{p}}\cdot h_{0}^{2^{q}} e^{- 2^{q}\varphi}$.
By the Cauchy inequality, we have
$$\lvert g(z)\rvert^{2}_{h_{G}^{2^{p}}\cdot h_{0}^{2^{q}}}\leq (\sum\limits_{i,j} \lvert
c_{i,j}\rvert^{2})(\sum\limits_{i,j} | f_{p-1, q-1 , i}(z) f_{p-1, q-1, j} (z)|^{2}_{h_{G}^{2^{p}}\cdot h_{0}^{2^{q}}}
)$$
$$\leq C_{1}\|g\|^{2} (\sum\limits_{i} | f_{p-1, q-1 , i}(z) |^{2}_{h_{G}^{2^{p-1}}\cdot h_{0}^{2^{q-1}}})^{2}.$$
Thus for $g$ with norm $\| g \|=1$, we have
$$\frac{1}{2^{q}}\ln \lvert g(z)\rvert^{2}_{h_{G}^{2^{p}}\cdot h_{0}^{2^{q}}}\leq \frac{\ln
C_{1}}{2^{q}}+\frac{1}{2^{q-1}} \ln(\sum\limits_{i} | f_{p-1, q-1 , i}(z) |^{2}_{h_{G}^{2^{p-1}}\cdot h_{0}^{2^{q-1}}})$$
$$=\frac{\ln C_{1}}{2^{q}}+\varphi_{p-1, q-1}(z).$$
By the extremal property of Bergman kernel, we obtain finally
$$\varphi_{p-1, q-1}\preccurlyeq \varphi_{p, q} .$$
The second inequality in $(2.1)$ is obvious by observing that $G$ is very ample.  
Thanks to the construction of $p_{m}, q_{m}$, 
$(2.1)$ implies that $\varphi_{m-1}\preccurlyeq \varphi_{m}$.
Therefore $\varphi_{m}$ is a quasi-equisingular approximation to $\varphi$ 
for the current $\frac{i}{2\pi}\Theta_{h_{0}}(L)+dd^{c}\varphi$.

We check the two properties listed in the proposition.
Property $(i)$ comes directly from the construction of $\varphi_{m}$. 
Since $\nd (L, \varphi)= n$ and  $\varphi_{m}$ is an quasi-equisingular approximation, 
the definition of the numerical dimension implies property $(ii)$.

\end{proof}

The rest of this section is devoted to prove Proposition 1.1.
The strategy is as follows.                                            
Thanks to property $(ii)$ of the approximating sequence $\{\varphi_{m}\}$, 
we can construct a new metric on $L$ with strictly positive curvatures
and the new metric is much singular than $\varphi$ in an asymptotic way.
Proposition 1.1 follows by the standard estimate for this new metric.
Before giving the construction of this new metric, 
we first prove two preparatory propositions.

\begin{prop}
Let $\varphi_{m}$ be the quasi-psh function constructed in Proposition 3.2. 
Then there exists another quasi-psh function $\widetilde{ \varphi}_{m} $
such that 
$$\sup\limits_{x\in X} \widetilde{ \varphi}_{m}(x)=0, \text{  } 
\frac{i}{2\pi}\Theta_{\widetilde{ \varphi}_{m}}(L)\geq \frac{\delta}{2}\cdot\omega
\text{  and  } \varphi_{m}\preccurlyeq\widetilde{
\varphi}_{m} ,$$ 
where $ \delta $ is a strictly positive number independent of $m$. 
\end{prop}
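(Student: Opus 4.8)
The plan is to realise $\widetilde{\varphi}_{m}$ as a (renormalised) potential, relative to $h_{0}$, of a Kähler current lying in $c_{1}(L)$ and more singular than $\varphi_{m}$; the whole point is to make the lower bound $\frac{\delta}{2}\omega$ uniform in $m$. Fix once and for all a small rational number $\epsilon_{0}>0$. Since $\{\varphi_{m}\}$ is a quasi-equisingular approximation of $\varphi$ (Proposition 3.2), I would first extract two numerical inputs, both independent of $m$ and of $\epsilon_{0}$: by property $(ii)$ of Proposition 3.2, $\int_{X}(\frac{i}{2\pi}\Theta_{\varphi_{m}}(L)+\epsilon_{0}\omega)_{\ac}^{n}\geq C$ for $m\geq m_{0}(\epsilon_{0})$, with $C>0$; and, since $\int_{X}(\frac{i}{2\pi}\Theta_{\varphi_{m}}(L))_{\ac}^{j}\wedge\omega^{n-j}$ converges as $m\to\infty$ to a finite number for each $j$ (this limit being, up to normalisation, the cohomological intersection number $\langle(i\Theta_{\varphi})^{j}\rangle\cdot\{\omega\}^{n-j}$ of Section 2), one has $\int_{X}(\frac{i}{2\pi}\Theta_{\varphi_{m}}(L)+\epsilon_{0}\omega)_{\ac}^{n-1}\wedge\omega\leq A$ once $\epsilon_{0}$ is small and $m$ large, for a constant $A$ independent of $m$ and $\epsilon_{0}$.

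Next I would get rid of the singularities of $T_{m}:=\frac{i}{2\pi}\Theta_{\varphi_{m}}(L)+\epsilon_{0}\omega$. Since $\frac{i}{2\pi}\Theta_{\varphi_{m}}(L)\geq-\tau_{m}\omega$ with $\tau_{m}\to0$, for $m$ large $T_{m}$ is a genuine closed positive current, with the same analytic singularities as $\varphi_{m}$, representing the $\mathbb{Q}$-class $\alpha_{\epsilon_{0}}:=c_{1}(L)+\epsilon_{0}\{\omega\}$. Let $\pi_{m}\colon\widehat{X}_{m}\to X$ be a log resolution of $\varphi_{m}$; then $\pi_{m}^{*}T_{m}=[D_{m}]+\gamma_{m}$, where $D_{m}$ is an effective $\mathbb{Q}$-divisor with normal crossing support and $\gamma_{m}$ is a smooth form, which moreover satisfies $\gamma_{m}\geq0$ since it coincides with $\pi_{m}^{*}T_{m}\geq0$ off a proper analytic subset. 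Hence $\{\gamma_{m}\}=\pi_{m}^{*}\alpha_{\epsilon_{0}}-\{D_{m}\}$ is a nef $\mathbb{Q}$-class on the projective manifold $\widehat{X}_{m}$, and, by the standard behaviour of absolutely continuous parts under a modification, $\{\gamma_{m}\}^{n}=\int_{X}(T_{m})_{\ac}^{n}\geq C$ and $\{\gamma_{m}\}^{n-1}\cdot\pi_{m}^{*}\{\omega\}=\int_{X}(T_{m})_{\ac}^{n-1}\wedge\omega\leq A$.

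Now I would apply Siu's numerical criterion for bigness on $\widehat{X}_{m}$ to the two nef $\mathbb{Q}$-classes $\{\gamma_{m}\}$ and $t_{0}\pi_{m}^{*}\{\omega\}$: for $t_{0}:=C/(4nA)$ one has $n\{\gamma_{m}\}^{n-1}\cdot(t_{0}\pi_{m}^{*}\{\omega\})\leq nt_{0}A=C/4<C\leq\{\gamma_{m}\}^{n}$, so $\{\gamma_{m}\}-t_{0}\pi_{m}^{*}\{\omega\}$ is big and carries a closed positive current $\Theta_{m}$. Writing $\Theta_{m}=\gamma_{m}-t_{0}\pi_{m}^{*}\omega+dd^{c}w_{m}$ with $w_{m}$ quasi-psh, normalised by $\sup_{\widehat{X}_{m}}w_{m}=0$, gives $\pi_{m}^{*}T_{m}+dd^{c}w_{m}=[D_{m}]+(\gamma_{m}+dd^{c}w_{m})\geq t_{0}\pi_{m}^{*}\omega$. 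Pushing forward by the bimeromorphic map $\pi_{m}$ (so $\pi_{m*}\pi_{m}^{*}=\mathrm{id}$ on $(1,1)$-currents) yields $T_{m}+dd^{c}\widehat{w}_{m}\geq t_{0}\omega$ on $X$, with $\widehat{w}_{m}:=\pi_{m*}w_{m}$ quasi-psh and $\sup_{X}\widehat{w}_{m}=0$, i.e. $\widehat{w}_{m}\leq0$; equivalently $\frac{i}{2\pi}\Theta_{h_{0}}(L)+dd^{c}(\varphi_{m}+\widehat{w}_{m})\geq(t_{0}-\epsilon_{0})\omega$. Since $t_{0}=C/(4nA)$ is a fixed positive constant, I go back and choose $\epsilon_{0}<t_{0}/2$, and put $\delta:=t_{0}$. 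Then $\widetilde{\varphi}_{m}:=\varphi_{m}+\widehat{w}_{m}-\sup_{X}(\varphi_{m}+\widehat{w}_{m})$ is quasi-psh, has $\sup_{X}\widetilde{\varphi}_{m}=0$ and $\frac{i}{2\pi}\Theta_{\widetilde{\varphi}_{m}}(L)\geq(t_{0}-\epsilon_{0})\omega\geq\frac{\delta}{2}\omega$, and, as $\widehat{w}_{m}\leq0$, one has $\widetilde{\varphi}_{m}\leq\varphi_{m}+C'$, i.e. $\varphi_{m}\preccurlyeq\widetilde{\varphi}_{m}$; and $\delta$ does not depend on $m$.

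The hard part will be precisely this uniformity. A plain use of ``nef with positive top self-intersection $\Rightarrow$ big'' would only give a Kähler current $\geq\epsilon_{m}\omega_{\widehat{X}_{m}}$ with $\epsilon_{m}$ possibly tending to $0$, hence a $\delta$ degenerating with $m$; it is the $m$-independent upper bound $\{\gamma_{m}\}^{n-1}\cdot\pi_{m}^{*}\{\omega\}\leq A$, available because the intersection numbers $\langle(i\Theta_{\varphi})^{j}\rangle\cdot\{\omega\}^{n-j}$ are finite and stable along the quasi-equisingular approximation, that lets Siu's criterion run with fixed constants even though $\widehat{X}_{m}$ varies with $m$. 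The only other point needing (routine) care is the identity $\int_{\widehat{X}_{m}}\gamma_{m}^{k}\wedge(\pi_{m}^{*}\omega)^{n-k}=\int_{X}(T_{m})_{\ac}^{k}\wedge\omega^{n-k}$ for a current with analytic singularities, which is standard (cf. \cite{Dem}).
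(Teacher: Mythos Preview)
Your proposal is correct and follows essentially the same route as the paper: both pass to a log resolution of $\varphi_{m}$, observe that the absolutely continuous part gives a nef $\mathbb{Q}$-class with uniformly bounded $(\cdot)^{n}\geq C$ and $(\cdot)^{n-1}\cdot\pi^{*}\omega\leq A$, apply the holomorphic Morse inequality / Siu bigness criterion to produce a closed positive current in the class minus a fixed multiple of $\pi^{*}\omega$, and push forward after normalisation. The paper phrases the key step as ``holomorphic Morse inequality'' (citing \cite{Dem}, \cite{Tra}) rather than ``Siu's criterion'', and is terser about the uniform upper bound $A$ (calling it a ``cohomological fact''), but the arguments are the same.
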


\begin{proof}

Let $\pi : X_{m}\rightarrow X$ be a log resolution of $\varphi_{m}$.
We can hence assume that
$$\frac{i}{2\pi}\Theta_{\varphi_{m}\circ\pi}(\pi^{*}(L))=[E]+C^{\infty} ,$$
where $ [E] $ is a normal crossing divisor.
Keeping the notations in Proposition 3.2,
we can suppose $A$ to be a $\mathbb{Q}$-ample line bundle such that
$c_{1}(A)=\omega$, and
$\epsilon$ to be a positive rational number in property $(ii)$ of Proposition
3.2, i.e.
$$\int_{X}(\frac{i}{2\pi}\Theta_{\varphi_{m}}(L)+\epsilon\omega)_{\ac} ^{n}> C .$$
Thanks to Proposition 3.2,
$$(\frac{i}{2\pi}\Theta_{\varphi_{m}\circ\pi}(\pi^{*}(L))+\epsilon\pi^{*}\omega)_{\ac}$$
is a $\mathbb{Q}$-nef class for $m$ large enough. 
We can thus choose a $\mathbb{Q}$-nef line bundle $F_{m}$ on $X_{m}$
such that
$$c_{1}
(F_{m})=(\frac{i}{2\pi}\Theta_{\varphi_{m}\circ\pi}(\pi^{*}(L))+\epsilon\pi^{*}\omega)_{\ac}.$$

Thanks to property $(ii)$ of Proposition $3.2$ 
and the cohomological fact that $F_{m}^{n-1}\cdot \pi^{*}A$ are uniformly
bounded for all $m$, 
we can thus choose a constant $\delta>0$ independent of $\epsilon$ and $m$, 
such that 
$$F_{m}^{n}> n \delta F_{m}^{n-1}\cdot \pi^{*}A . \leqno (3.1)$$
Using the holomorphic Morse inequality (cf. \cite{Dem} or \cite{Tra})
for the $\mathbb{Q}$-bundle $F_{m}-\delta\cdot\pi^{*}(A)$ on $X_{m}$, we have
$$h^{0}(X_{m}, kF_{m}-k\delta \cdot\pi^{*}A)\geq C \frac{k^{n}}{n !}
(F_{m}^{n}-n \delta F_{m}^{n-1}\cdot \pi^{*}A)+O(k^{n-1}).\leqno (3.2)$$
Combining $(3.1)$ and $(3.2)$, 
we get that
$F_{m}-\delta\pi^{*}\omega$ is pseudo-effective. 
In particular, if we take $\epsilon\leq\frac{\delta}{2}$,
the pseudo-effectiveness of $F_{m}-\delta\pi^{*}\omega$
implies the existence of a quasi-psh function $ \psi_{m} $ on $X_{m}$ such that
$$\frac{i}{2\pi}\Theta_{\varphi_{m}\circ\pi}(\pi^{*}(L))+dd^{c}\psi_{m}\geq
\frac{\delta}{2} \pi^{*}\omega . \leqno (3.3)$$
Choosing a constant $C_{1}$ such that
$$\sup\limits_{x\in X_{m}}(\varphi_{m}\circ\pi+\psi_{m}+C_{1})(x)=0 ,$$
$(3.3)$ implies that $\varphi_{m}\circ\pi(x)+\psi_{m}(x)+C_{1}$
induces a quasi-psh function on $X$ satisfying all the requirements
in the proposition.
We denote it by $\widetilde{\varphi}_{m}$.
\end{proof}

\begin{remark}
The essential point here is the holomorphic Morse inequality. 
Therefore $\frac{i}{2\pi}\Theta_{\varphi_{m}\circ\pi}$ is required to be rational. 
By using other techniques, 
Demailly and Paun proved in \cite{DP 04} the same results for the case of a real class.
\end{remark}

Thanks to Proposition $3.3$, 
we are going to construct a singular metric on $L$ which is a kind of limit of $\widetilde{ \varphi}_{m}$.
We first recall the notion of upper semicontinuous regularization.
Let $\Omega\subset \mathbb{R}^{n}$ and 
let $(u_{\alpha})_{\alpha\in I}$ be a family of upper semicontinuous fuctions 
$\Omega\rightarrow [-\infty, +\infty[$. 
Assume that $(u_{\alpha})$ is locally uniformly bounded from above.
Since the upper envelope
$$u=\sup_{\alpha\in I} u_{\alpha}$$
need not be upper semicontinuous, 
we consider its upper semicontinuous regularization:
$$u^{*}(z)=\lim_{\epsilon\rightarrow 0}\sup_{B(z,\epsilon)} u.$$
We denote this upper semicontinuous regularization $\widetilde{\sup\limits_{\alpha}}(u_{\alpha})$.
It is easy to proof that if $\{ u_{\alpha}\}_{\alpha\in I}$ 
are psh fonctions with locally uniformly bounded from above, 
then $\widetilde{\sup\limits_{\alpha}}(u_{\alpha})$ is also a psh function
(cf. \cite{Dem} for details).

We need the following lemma.
\begin{lemme}
Let $\varphi$ be a quasi-psh fonction with normal crossing singularities
and let $ \{\psi_{i} \} $ be quasi-psh functions
such that 
$$\sup\limits_{z\in X}
\psi_{i}(z)\leq 0\qquad\text{and}\qquad  dd^{c}\psi_{i}\geq -C\omega$$
for some uniform constant $C$ independent of $i$.
If $\varphi\preccurlyeq \psi_{i} $ for all $ i $, 
then 
$$ \varphi\preccurlyeq\widetilde{\sup\limits_{i}} (\psi_{i}) .$$
\end{lemme}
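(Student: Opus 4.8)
The plan is to localize near the divisor $D$ carrying the singularities of $\varphi$ and to upgrade the per-index hypothesis $\varphi\preccurlyeq\psi_i$ — i.e. $\psi_i\le\varphi+C_i$ with $C_i$ a priori depending on $i$ — to an inequality $\psi_i\le\varphi+C$ with $C$ \emph{independent of $i$}. Once this is done the lemma is immediate: then $\sup_i\psi_i\le\varphi+C$, and since $\varphi+C$ is upper semicontinuous, the upper semicontinuous regularization of a family lying below it still lies below it, so $\widetilde{\sup\limits_i}(\psi_i)\le\varphi+C$, which is exactly $\varphi\preccurlyeq\widetilde{\sup\limits_i}(\psi_i)$.

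First I would cover $D$ by finitely many coordinate polydisks on which $D=\{z_1\cdots z_k=0\}$ and $\varphi=\sum_{j=1}^k a_j\log|z_j|^2+u$ with $a_j>0$ and $u$ smooth; after shrinking I may take the polydisk to be $\Delta_1^n$, with $|u|\le M$ on $\tfrac12\Delta_1^n$. On the complement of the union of these (inner) polydisks, $\varphi$ is bounded below, say $\varphi\ge -M_0$, and there the estimate is trivial since $\psi_i\le 0$. On $\tfrac12\Delta_1^n$ I set $t_i:=\psi_i-\sum_{j=1}^k a_j\log|z_j|^2+A\sum_{j=1}^n|z_j|^2$, with $A$ chosen large enough — uniformly in $i$, since $dd^c\psi_i\ge -C\omega$ with one and the same $C$ — that $dd^c t_i\ge 0$ on $\tfrac12\Delta_1^n\setminus D$. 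There $t_i$ is plurisubharmonic, and from $\psi_i\le\varphi+C_i$ one reads off $t_i\le u+C_i+A\sum|z_j|^2$, so $t_i$ is bounded above near $D$; by the standard extension theorem for plurisubharmonic functions across the analytic set $D$, $t_i$ extends to a plurisubharmonic function on all of $\tfrac12\Delta_1^n$.

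The crux is then the maximum principle for plurisubharmonic functions on a polydisk: for every $r<\tfrac12$ one has $\sup_{\Delta_r^n}t_i=\sup\{\,t_i(z):|z_j|=r,\ j=1,\dots,n\,\}$, and on that distinguished torus $t_i=\psi_i+(\log\tfrac{1}{r^2})\sum_{j=1}^k a_j+nAr^2\le(\log\tfrac{1}{r^2})\sum_{j=1}^k a_j+nAr^2$ because $\sup_X\psi_i\le 0$. Letting $r\uparrow\tfrac12$ yields $t_i\le(\log 4)\sum_{j=1}^k a_j+\tfrac{n}{4}A=:C^\ast$ on $\tfrac12\Delta_1^n$, a bound free of $i$. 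Hence $\psi_i\le\sum_{j=1}^k a_j\log|z_j|^2+C^\ast\le\varphi+(C^\ast+M)$ on $\tfrac12\Delta_1^n$ (at points of $D$ both sides are $-\infty$, so nothing is needed there). Passing to the supremum over $i$ and then to the upper semicontinuous regularization preserves this inequality, and combining the finitely many polydisks with the region away from $D$ gives a single constant $C$ with $\widetilde{\sup\limits_i}(\psi_i)\le\varphi+C$ on $X$.

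The main obstacle is exactly this passage to a uniform constant: in the intended application the $C_i$ genuinely blow up, and a priori a supremum of functions each $\equiv-\infty$ on $D$ can become finite there after regularization, which would destroy the comparison with $\varphi$. The normalization $\sup_X\psi_i\le 0$ is what forbids this — through the maximum principle applied to the torus transverse to $D$, it prevents the logarithmic poles of $\psi_i$ along $D$ from being inflated by an $i$-dependent additive constant. The only other point requiring care is that the $\psi_i$ are merely quasi-plurisubharmonic, but the defect constant in $dd^c\psi_i\ge -C\omega$ is uniform in $i$, so the single correction term $A\sum|z_j|^2$ absorbs it once and for all.
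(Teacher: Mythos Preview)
Your proof is correct and takes a different route from the paper's. You localize near the normal-crossing divisor and apply the maximum principle for psh functions on polydisks directly, reading off the uniform bound from the values of $\psi_i$ on the distinguished torus (where the normalization $\sup_X\psi_i\le 0$ does the work). The paper instead argues globally: since $\varphi$ has normal crossing singularities and $\varphi\preccurlyeq\psi_i$, the difference $\psi_i-\varphi$ extends as a quasi-psh function on $X$ with $dd^c(\psi_i-\varphi)\ge -C_1\omega$ for a uniform $C_1$; from $\psi_i\le 0$ one then has a uniform upper bound $\int_X(\psi_i-\varphi)\le M'$, and the standard $L^1$--to--sup estimate for quasi-psh functions with fixed curvature lower bound (essentially the sub-mean-value inequality, or a Green-kernel representation) gives $\sup_X(\psi_i-\varphi)\le C_2$. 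Your argument is more elementary and self-contained; the paper's is shorter but invokes the compactness machinery for quasi-psh functions as a black box. Both hinge on the same two hypotheses in the same way: the normal-crossing form of $\varphi$ is what makes $\psi_i-\varphi$ (or your $t_i$) extend plurisubharmonically across $D$, and the normalization $\sup_X\psi_i\le 0$ together with the uniform curvature bound is what yields the $i$-independent estimate.
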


\begin{proof}

Since $\varphi$ has normal crossing singularities and $\varphi$ is less singular than $\varphi_{i}$, 
$ \psi_{i} - \varphi $ are quasi-psh functions 
and
$$dd^{c}(\psi_{i}-\varphi) \geq -C_{1}\omega \leqno (4.1)$$
for some uniform constant $C_{1}$ independent of $i$.

On the other hand, 
since $\sup\limits_{z\in X} \psi_{i}(z)\leq 0$ and $dd^{c}\varphi_{i}\geq
-C\omega$ for a uniform $C$, 
by the standard potential theory, there exists a constant $ M $ such that
$$\int_{X}\psi_{i}\leq M \qquad\text{for all}\hspace{5 pt} i.$$
Therefore 
$$\int_{X}(\psi_{i}-\varphi)\leq M'\leqno (4.2)$$
for a uniform constant $M'$.

$(4.1)$ and $(4.2)$ imply the existence of a uniform constant $ C_{2}$ such
that 
$$\sup\limits_{z\in X} (\psi_{i}(z)-\varphi(z))\leq C_{2} \qquad\text{for all}\hspace{5 pt} i.$$
Then $ \varphi\preccurlyeq\widetilde{\sup\limits_{i}} (\psi_{i}).$
The lemma is proved.
\end{proof}

The following metric will be useful in our context.
Using the same termology as above, we have

\begin{prop}
Let $\widetilde{\varphi}(z)=\lim\limits_{m\rightarrow
\infty}\widetilde{\sup\limits_{s\geq 0}} ((\widetilde{\varphi}_{m+s}(z)))$.
Then the new metric $\widetilde{\varphi}$ satisfies:
$$\frac{i}{2\pi}\Theta_{\widetilde{\varphi}}(L)\geq \frac{\delta}{2}\omega\qquad\text{and}\qquad
\varphi_{m}\preccurlyeq \widetilde{\varphi}\leqno (5.0)$$
for any $m\geq 1$.
\end{prop}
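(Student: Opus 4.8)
The plan is to realize $\widetilde{\varphi}$ as a decreasing limit of the regularized suprema $u_{m}:=\widetilde{\sup}_{s\geq 0}(\widetilde{\varphi}_{m+s})$ and to push both required properties through this limit; write $\gamma:=\frac{i}{2\pi}\Theta_{h_{0}}(L)$ for the fixed smooth curvature form. First I would fix $m$ and analyse $u_{m}$: by Proposition 3.3 each $\widetilde{\varphi}_{m+s}$ is $(\gamma-\frac{\delta}{2}\omega)$-psh with $\sup_{X}\widetilde{\varphi}_{m+s}=0$, so the family $\{\widetilde{\varphi}_{m+s}\}_{s\geq 0}$ is locally uniformly bounded above and its upper envelope has an upper semicontinuous regularization $u_{m}$ which is again $(\gamma-\frac{\delta}{2}\omega)$-psh with $\sup_{X}u_{m}=0$, that is $\frac{i}{2\pi}\Theta_{u_{m}}(L)\geq\frac{\delta}{2}\omega$. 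Since the index set $\{m+1,m+2,\dots\}$ defining $u_{m+1}$ is contained in the one defining $u_{m}$, the sequence $(u_{m})_{m}$ is pointwise decreasing, so $\widetilde{\varphi}:=\lim_{m}u_{m}$ is well defined with values in $[-\infty,0]$.

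Next I would control the limit. A decreasing limit of $(\gamma-\frac{\delta}{2}\omega)$-psh functions is either $(\gamma-\frac{\delta}{2}\omega)$-psh or identically $-\infty$, and to exclude the latter I would use the standard fact that the family of $(\gamma-\frac{\delta}{2}\omega)$-psh functions with supremum $0$ is bounded in $L^{1}(X,\omega^{n})$; applying this uniformly to the $u_{m}$ and using monotone convergence gives $\int_{X}|\widetilde{\varphi}|\,\omega^{n}<+\infty$, hence $\widetilde{\varphi}\not\equiv-\infty$ and $\widetilde{\varphi}$ is quasi-psh. Passing to the weak limit of currents in $\gamma+dd^{c}u_{m}\geq\frac{\delta}{2}\omega$ then gives $\frac{i}{2\pi}\Theta_{\widetilde{\varphi}}(L)\geq\frac{\delta}{2}\omega$, the first half of $(5.0)$.

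It remains to prove $\varphi_{m}\preccurlyeq\widetilde{\varphi}$ for every fixed $m$; since $(u_{m})$ is decreasing we have $\widetilde{\varphi}\leq u_{m}$, so it suffices to show $\varphi_{m}\preccurlyeq u_{m}$, and for this I would invoke Lemma 3.5. Because $\varphi_{m}$ only has analytic singularities I would first pull everything back by the log resolution $\pi:X_{m}\to X$ of $\varphi_{m}$ already used in the proof of Proposition 3.3, so that $\varphi_{m}\circ\pi$ has normal crossing singularities. The functions $\psi_{i}:=\widetilde{\varphi}_{m+i}\circ\pi$ then satisfy $\sup_{X_{m}}\psi_{i}=0$ and $dd^{c}\psi_{i}\geq-C\,\omega_{X_{m}}$ uniformly in $i$ (from the curvature bound of Proposition 3.3 and $\gamma$ being smooth), and $\varphi_{m}\circ\pi\preccurlyeq\psi_{i}$ for every $i$, since $\varphi_{m}\preccurlyeq\varphi_{m+i}$ (the approximation $\{\varphi_{k}\}$ being increasingly singular, $\varphi_{k}\preccurlyeq\varphi_{k+1}$) and $\varphi_{m+i}\preccurlyeq\widetilde{\varphi}_{m+i}$ by Proposition 3.3, so transitivity of $\preccurlyeq$ applies. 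Lemma 3.5 then yields $\varphi_{m}\circ\pi\preccurlyeq\widetilde{\sup}_{i\geq 0}(\widetilde{\varphi}_{m+i}\circ\pi)=u_{m}\circ\pi$ (the two regularized suprema agreeing off the $\pi$-exceptional, hence pluripolar, set and both being quasi-psh), i.e.\ $u_{m}\circ\pi\leq\varphi_{m}\circ\pi+C$ on $X_{m}$; pushing this down along the surjection $\pi$ gives $u_{m}\leq\varphi_{m}+C$ on $X$, hence $\varphi_{m}\preccurlyeq u_{m}$ and therefore $\varphi_{m}\preccurlyeq\widetilde{\varphi}$, which is the second half of $(5.0)$.

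I expect the genuinely delicate points to be two: ruling out $\widetilde{\varphi}\equiv-\infty$, which rests on the uniform $L^{1}$-compactness of normalized quasi-psh functions rather than on any pointwise lower bound (none is available, since Proposition 3.3 only compares $\varphi_{m}$ with $\widetilde{\varphi}_{m}$ from one side); and the passage to a log resolution needed to bring Lemma 3.5, stated for normal crossing singularities, into play — including the routine but not purely cosmetic check that the regularized supremum commutes with composition by $\pi$. The monotonicity of $(u_{m})$ and the curvature bookkeeping are straightforward.
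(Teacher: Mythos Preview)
Your argument is correct and follows essentially the same route as the paper: reduce $\varphi_{m}\preccurlyeq\widetilde{\varphi}$ to $\varphi_{m}\preccurlyeq u_{m}$ via $\widetilde{\varphi}\leq u_{m}$, pull back by a log resolution of $\varphi_{m}$, apply the lemma on regularized suprema (this is Lemma~3.4 in the paper, not 3.5), and push down. The paper's proof is terser and simply asserts the curvature inequality for the limit without discussing whether $\widetilde{\varphi}\not\equiv-\infty$; your use of the uniform $L^{1}$-bound for normalized quasi-psh functions, and your remark on why $\widetilde{\sup}_{s}(\widetilde{\varphi}_{m+s}\circ\pi)=u_{m}\circ\pi$, are genuine details the paper leaves implicit.
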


\begin{proof}

By Proposition $3.3$, we have
$$\frac{i}{2\pi}\Theta_{\widetilde{\varphi}_{m}}(L)\geq \frac{\delta}{2}\omega \qquad\text{for } m\geq 1.$$
Then $\frac{i}{2\pi}\Theta_{\widetilde{\varphi}}(L)\geq \frac{\delta}{2}\omega .$
To check $\varphi_{m}\preccurlyeq \widetilde{\varphi}$,
it is enough to show that
$$\varphi_{m}\preccurlyeq \widetilde{\sup\limits_{s\geq 0}}
(\widetilde{\varphi}_{m+s})\leqno
(5.1)$$
by observing $\widetilde{\varphi} \leq \widetilde{\sup\limits_{s\geq 0}}
(\widetilde{\varphi}_{m+s})$.
Combining Proposition $3.3$ and Proposition $3.2$, we have 
$$\varphi_{m} \preccurlyeq \varphi_{m+s}  
\preccurlyeq \widetilde{\varphi}_{m+s} \qquad\text{for any}\hspace{5 pt} m , s. \leqno (5.2)$$
Let $ \pi $ be a log resolution of $ \varphi_{m}$. 
Then
$$\varphi_{m}\circ\pi \preccurlyeq \varphi_{m+s}\circ\pi  \preccurlyeq
\widetilde{\varphi}_{m+s}\circ\pi.\leqno(5.3)$$
Thanks to Lemma $3.4$, $(5.3)$ implies that 
$$\varphi_{m}\circ\pi\preccurlyeq \widetilde{\sup\limits_{s\geq 0}}
(\widetilde{\varphi}_{m+s}\circ\pi).$$
By passing to $ \pi_{*} $, $(5.1)$ is proved.
\end{proof}

Using the new metric $\widetilde{\varphi}$, we can give an asymptotic estimate.

\begin{prop}
Let $(L,\varphi)$ be a pseudo-effective line bundle on a projective variety $X$
of dimension $n$ with $\nd(L,\varphi)=n$.
Then
$$\varliminf\limits_{m\to \infty}\frac{h^{0}(X,mL\otimes\mathcal{I}(m\varphi))}{m^{n}}> 0 .$$
\end{prop}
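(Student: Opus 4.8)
The plan is to use the metric $\widetilde\varphi$ produced in Proposition 3.5 — which has strictly positive curvature and is, in an asymptotic sense, at least as singular as $\varphi$ — as an auxiliary metric on $L$: first I would convert this asymptotic comparison into an honest inclusion of multiplier ideal sheaves, exploiting the quasi-equisingularity of the sequence $\{\varphi_m\}$, and then apply a standard asymptotic estimate to the positive metric. For the bookkeeping, since $\frac{i}{2\pi}\Theta_{h_0}(L)$ is a fixed smooth form and $\frac{i}{2\pi}\Theta_{\widetilde\varphi}(L)\ge 0$, there is a constant $C_0>0$ with $dd^c\widetilde\varphi\ge -C_0\omega$. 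Let $\delta>0$ be the constant of Proposition 3.5, set $\delta_0:=\delta/(4C_0)$ and $\phi:=(1+\delta_0)\widetilde\varphi$. Then $\phi$ is quasi-psh and
$$\frac{i}{2\pi}\Theta_{\phi}(L)=\frac{i}{2\pi}\Theta_{\widetilde\varphi}(L)+\delta_0\,dd^c\widetilde\varphi\ \ge\ \frac{\delta}{2}\,\omega-\delta_0C_0\,\omega\ =\ \frac{\delta}{4}\,\omega ,$$
so $(L,\phi)$ is a pseudo-effective pair whose curvature is a Kähler current.

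The key point is the inclusion $\mathcal{I}(m\phi)\subseteq\mathcal{I}(m\varphi)$ for every $m\in\mathbb{N}$. Fix $m$, and apply condition $(iii)$ of Definition 2 to the quasi-equisingular approximation $\{\varphi_k\}$ of Proposition 3.2, with parameters $\delta_0$ and $m$: there exists $k$ with $\mathcal{I}\big(m(1+\delta_0)\varphi_k\big)\subseteq\mathcal{I}(m\varphi)$. By Proposition 3.5 one has $\varphi_k\preccurlyeq\widetilde\varphi$, i.e.\ $\widetilde\varphi\le\varphi_k+C_k$ for some constant $C_k$; multiplying by $m(1+\delta_0)>0$ gives $m\phi\le m(1+\delta_0)\varphi_k+m(1+\delta_0)C_k$, whence
$$\mathcal{I}(m\phi)\ \subseteq\ \mathcal{I}\big(m(1+\delta_0)\varphi_k\big)\ \subseteq\ \mathcal{I}(m\varphi).$$
In particular $h^0\big(X,mL\otimes\mathcal{I}(m\varphi)\big)\ge h^0\big(X,mL\otimes\mathcal{I}(m\phi)\big)$ for all $m$.

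It remains to bound $h^0\big(X,mL\otimes\mathcal{I}(m\phi)\big)$ from below, which is the ``standard estimate'' for the positive metric $\phi$. Since $\frac{i}{2\pi}\Theta_\phi(L)\ge\frac{\delta}{4}\omega$, its absolutely continuous part satisfies $\big(\frac{i}{2\pi}\Theta_\phi(L)\big)_{\ac}\ge\frac{\delta}{4}\omega$ almost everywhere, hence $\int_X\big(\frac{i}{2\pi}\Theta_\phi(L)\big)_{\ac}^{\,n}\ge(\delta/4)^n\int_X\omega^n>0$. I would then invoke the singular version of the holomorphic Morse inequalities (cf.\ \cite{Dem}, \cite{Tra}) for $(L,\phi)$: as $\big(\frac{i}{2\pi}\Theta_\phi(L)\big)_{\ac}$ is positive definite almost everywhere, the locus of points where it has at most one negative eigenvalue is all of $X$ up to a null set, and the strong Morse inequality applied with $q=1$ gives
$$h^0\big(X,mL\otimes\mathcal{I}(m\phi)\big)\ \ge\ h^1\big(X,mL\otimes\mathcal{I}(m\phi)\big)+\frac{m^n}{n!}\int_X\Big(\frac{i}{2\pi}\Theta_\phi(L)\Big)_{\ac}^{\,n}+o(m^n)\ \ge\ \frac{m^n}{n!}\Big(\frac{\delta}{4}\Big)^n\!\int_X\omega^n+o(m^n).$$
(Alternatively, Nadel's vanishing theorem gives $H^q\big(X,mL\otimes\mathcal{I}(m\phi)\big)=0$ for $q\ge 1$ and $m\gg0$, since the metric on $mL$ then has curvature $\ge\frac{m\delta}{4}\omega$ dominating that of $-K_X$, reducing the lower bound to an estimate for the Euler characteristic; or, to stay in the analytic-singularity setting, one may run the whole argument with the functions $\widetilde\varphi_m$ of Proposition 3.3, which — since the argument there in fact shows $F_m-\delta\pi^*A$ to be \emph{big} — may be chosen with analytic singularities.) Combining the two paragraphs,
$$\varliminf_{m\to\infty}\frac{h^0\big(X,mL\otimes\mathcal{I}(m\varphi)\big)}{m^n}\ \ge\ \varliminf_{m\to\infty}\frac{h^0\big(X,mL\otimes\mathcal{I}(m\phi)\big)}{m^n}\ \ge\ \frac{1}{n!}\Big(\frac{\delta}{4}\Big)^n\!\int_X\omega^n\ >\ 0 ,$$
which is the assertion.

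I expect the main obstacle to be the inclusion $\mathcal{I}(m\phi)\subseteq\mathcal{I}(m\varphi)$: it is precisely there that the asymptotic ``extra singularity'' of $\widetilde\varphi$ over $\varphi$ is turned — via condition $(iii)$ of the quasi-equisingular approximation together with $\widetilde\varphi\le\varphi_k+C_k$ — into a genuine inclusion of ideal sheaves, so that all of the work invested in Propositions 3.2--3.5 comes into play; a mere analytic approximation of $\varphi$ would not suffice here. A smaller but essential point is the curvature computation in the first paragraph, showing that passing from $\widetilde\varphi$ to $(1+\delta_0)\widetilde\varphi$ does not destroy strict positivity; and the concluding input — the singular holomorphic Morse inequalities (equivalently, Nadel vanishing together with an asymptotic Riemann--Roch estimate) for a metric with Kähler-current curvature — is the standard fact on which everything rests once the ideal comparison is in hand.
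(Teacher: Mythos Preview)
Your proof is correct and follows essentially the same route as the paper's: both invoke the metric $\widetilde\varphi$ of Proposition~3.5, convert the relation $\varphi_k\preccurlyeq\widetilde\varphi$ together with the quasi-equisingularity of $\{\varphi_k\}$ into an inclusion of multiplier ideals $\mathcal{I}\big(m(1+\delta_0)\widetilde\varphi\big)\subset\mathcal{I}(m\varphi)$, and then use the strict positivity $\frac{i}{2\pi}\Theta_{\widetilde\varphi}(L)\geq\frac{\delta}{2}\omega$ for the asymptotic lower bound on $h^0$. The only difference is bookkeeping---you fix $\delta_0$ once and work with the single metric $\phi=(1+\delta_0)\widetilde\varphi$ (checking explicitly that its curvature stays $\geq\frac{\delta}{4}\omega$), whereas the paper uses Lemma~3.1 with the varying factor $\tfrac{2^{q_k}}{2^{q_k}-m}\to 1^{+}$; both arguments are equally brief at the concluding ``standard estimate'', which in either case amounts to the well-known fact that a metric whose curvature dominates a K\"ahler form produces $h^0\big(X,mL\otimes\mathcal{I}(m\,\cdot\,)\big)\ge Cm^n$ (via $L^2$ jet generation or, after Nadel vanishing, a Riemann--Roch count).
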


\begin{proof}

Lemma $3.1$ implies that for any $m\in \mathbb{N}$,
$$h^{0}(X,mL\otimes\mathcal{I}(m\varphi))\geq
h^{0}(X,mL\otimes\mathcal{I}(\frac{m\cdot
2^{q_{k}}}{2^{q_{k}}-m}\varphi_{k})).\leqno(6.1)$$
Let $\widetilde{\varphi}$ be the metric constructed in Proposition $3.5$. 
Then Proposition 3.5 implies that
$$h^{0}(X,mL\otimes\mathcal{I}(\frac{m\cdot
2^{q_{k}}}{2^{q_{k}}-m}\varphi_{k}))\geq
h^{0}(X,mL\otimes\mathcal{I}(\frac{m\cdot
2^{q_{k}}}{2^{q_{k}}-m}\widetilde{\varphi})).\leqno(6.2)$$
for any $ k,m$.
Combining $(6.1)$ and $(6.2)$, we get
$$h^{0}(X,mL\otimes\mathcal{I}(m\varphi))\geq
h^{0}(X,mL\otimes\mathcal{I}(\frac{m\cdot
2^{q_{k}}}{2^{q_{k}}-m}\widetilde{\varphi})).\leqno (6.3)$$
Since $(6.3)$ is true for all $m$ and $k$, if we take $k$ such that $2^{q_{k}}\gg m$, 
by applying $\frac{i}{2\pi}\Theta_{\widetilde{\varphi}}(L)> 0$ proved in Proposition $3.5$,
$(6.3)$ implies that
$$\varliminf\limits_{m\to \infty}\frac{h^{0}(X,mL\otimes\mathcal{I}(m\varphi))}{m^{n}}> 0 .$$
\end{proof}

\section{A numerical criterion}

Until now, we have two kind of numerical dimension of pseudo-effective line bundles:
$\nu_{\num}(L,\varphi)$ in Defintion 1 and more analytically 
$\nd (L,\varphi)$ in Definition 4.
We prove in this section that $\nu_{\num}(L,\varphi)=\nd (L,\varphi)$
when $X$ is projective.
Before giving the proof, 
we first list some properties of multiplier ideal sheaves which will be useful in our context.
The essential tool here is the Ohsawa-Takegoshi extension theorem.

\begin{lemme}
Let $ (L, \varphi) $ be a pseudo-effective line bundle on a projective variety $X$ of dimension $ n $ 
and let $\{\varphi_{k}\}$ be a quasi-equisingular approximation to $\varphi$.
Let $s_{1}$ be a positive number such that 
$$\mathcal{I}((1+\epsilon')\varphi)=\mathcal{I}_{+}(\varphi)
\qquad\text{for any}\hspace{5 pt} 0 <\epsilon'\leq s_{1} \leqno (*)$$
Assume that $ A $ is a very ample line bundle 
and $ S $ is the zero divisor of a general global section of $H^{0}(X, A) $. 
We have the following properties:

$(i)$ The restrictions
$$\mathcal{I}(m(1+\delta)\varphi_{k})\rightarrow \mathcal{I}(S, m(1+\delta)\varphi_{k}\mid_{S}) \leqno (1.1)$$
$$\mathcal{I}(m(1+\delta)\varphi)\rightarrow \mathcal{I}(S, m(1+\delta)\varphi\mid_{S}) \leqno (1.2)$$
are well defined for all $ m \in \mathbb{N} $, $ \delta \geq 0$,
where $\varphi\mid_{S}$ denotes the restriction of $\varphi$ on $S$
and $\mathcal{I}(S, \varphi\mid_{S})$ is the multiplier ideal sheaf associated to $\varphi\mid_{S}$ on $S$.
\footnote{$\varphi\mid_{S}$ is also psh if it is well defined.}
Moreover we have
$$\mathcal{I}(S,(1+\epsilon')\varphi\mid_{S})=\mathcal{I}(S, (1+s_{1})\varphi\mid_{S})
\qquad\text{for any}\hspace{5 pt} 0 <\epsilon'\leq s_{1} .$$

$(ii)$ $\{\varphi_{k}\mid_{S}\}$ is also a quasi-equisingular approximation to $\varphi\mid_{S}$.

$(iii)$ If the restrictions are well defined, we have an exact sequence:
$$0\rightarrow \mathcal{I}_{+}(\varphi)\otimes \mathcal{O}(-S)\rightarrow
\adj_{S}^{\epsilon}(\varphi)\rightarrow \mathcal{I}_{+}(S,
\varphi\mid_{S})\rightarrow 0$$
for any $ 0 <\epsilon\leq s_{1} $, where
$$\adj_{S}^{\epsilon}(\varphi)_{x}=\{ f\in \mathcal{O}_{x}, \int_{U_{x}} \frac{\mid
f\mid^{2}}{\mid s\mid^{2(1-\frac{\epsilon}{2})}}e^{-2(1+\epsilon)\varphi}<
+\infty\} $$
and 
$\mathcal{I}_{+}(S, \varphi\mid_{S})$ is multiplier ideal sheaf $\mathcal{I}_{+}(\varphi|_{S})$ on $S$.

$(iv)$ $\adj_{S}^{\epsilon}(\varphi)=\mathcal{I}_{+}(\varphi)$ for any $ 0 <\epsilon\leq s_{1} $.

\end{lemme}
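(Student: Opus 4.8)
The plan is to prove the four assertions in the order (i)[main part], (ii), (iv), (iii), with the ``moreover'' in (i) coming out as a byproduct of (iv); the common engine is the restriction theorem for multiplier ideals together with the Ohsawa--Takegoshi extension theorem, the ``$\epsilon$-room'' built into $\adj_S^\epsilon$ being exactly what keeps the Ohsawa--Takegoshi constant finite.

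For the well-definedness in (i) I would invoke the restriction theorem for multiplier ideals (Demailly--Ein--Lazarsfeld; cf. \cite{Dem}): for a \emph{general} member $S$ of a base-point-free (here very ample) linear system and any quasi-psh $\psi$, one has $\psi|_S\not\equiv-\infty$ and $\mathcal{I}(X,\psi)|_S\subseteq\mathcal{I}(S,\psi|_S)$; the excluded $S$ lie in a countable union of proper subvarieties, coming from a Fubini argument along pencils applied to a countable generating set of $\mathcal{I}(\psi)$. Applying this simultaneously to $\psi=m(1+\delta)\varphi$ and $\psi=m(1+\delta)\varphi_k$ for $m\in\mathbb{N}$, $\delta\in\mathbb{Q}_{\geq0}$, $k\in\mathbb{N}$ (countably many data), one general $S$ works for all of them, and arbitrary real $\delta\geq0$ follows by monotonicity of $\delta\mapsto\mathcal{I}(m(1+\delta)\varphi_k)$. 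I postpone the equality $\mathcal{I}(S,(1+\epsilon')\varphi|_S)=\mathcal{I}(S,(1+s_1)\varphi|_S)$ for $0<\epsilon'\le s_1$: it drops out of the identification $\mathcal{I}(S,(1+\epsilon')\varphi|_S)=\mathcal{I}_+(\varphi)|_S$ proved below, which does not depend on $\epsilon'\in(0,s_1]$.

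For (ii): condition (i) of Definition 2 for $\{\varphi_k|_S\}$ follows by restricting the curvature lower bounds (immediate) and by the fact that $L^1_{\mathrm{loc}}$-convergence of quasi-psh functions restricts to a general slice (Fubini plus Hartogs-type compactness); condition (ii) restricts trivially, since analytic singularities pull back to analytic singularities along a general hyperplane and $\varphi_{k+1}\le\varphi_k+C$ restricts to $\varphi_{k+1}|_S\le\varphi_k|_S+C$; for condition (iii), since each $\varphi_k$ has analytic singularities and $S$ is general, a log-resolution argument gives $\mathcal{I}(S,m(1+\delta)\varphi_k|_S)=\mathcal{I}(m(1+\delta)\varphi_k)|_S$, whence $\mathcal{I}(S,m(1+\delta)\varphi_k|_S)\subseteq\mathcal{I}(m\varphi)|_S\subseteq\mathcal{I}(S,m\varphi|_S)$ for $k\ge k_0(\delta,m)$, using condition (iii) of Definition 2 for $\{\varphi_k\}$ and the restriction theorem once more.

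For (iv): the two easy inclusions $\mathcal{I}_+(\varphi)\otimes\mathcal{O}(-S)\subseteq\adj_S^\epsilon(\varphi)\subseteq\mathcal{I}_+(\varphi)$ are direct, using $\mathcal{I}((1+\epsilon)\varphi)=\mathcal{I}_+(\varphi)$ (since $\epsilon\le s_1$): a germ $sg$ with $g\in\mathcal{I}_+(\varphi)$ has $\int|s|^{\epsilon}|g|^2e^{-2(1+\epsilon)\varphi}\le C\int|g|^2e^{-2(1+\epsilon)\varphi}<\infty$, and any $f\in\adj_S^\epsilon(\varphi)$ has $\int|f|^2e^{-2(1+\epsilon)\varphi}\le C\int\frac{|f|^2}{|s|^{2-\epsilon}}e^{-2(1+\epsilon)\varphi}<\infty$ since $|s|^{-(2-\epsilon)}$ is bounded below. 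Next I would restrict to $S$: the elementary half of Ohsawa--Takegoshi (Fubini along transverse disks, using $\int_{|z_1|<r}|z_1|^{-(2-\epsilon)}dV<\infty$ because $\epsilon>0$) shows restriction maps $\adj_S^\epsilon(\varphi)$ into $\mathcal{I}(S,(1+\epsilon)\varphi|_S)$, and the Ohsawa--Takegoshi extension theorem applied locally over Stein pieces (the coefficient $1-\frac{\epsilon}{2}<1$ of $[S]$ in the relevant weight gives a finite extension constant) shows this restriction is \emph{onto} $\mathcal{I}(S,(1+\epsilon)\varphi|_S)$. Composing with $\adj_S^\epsilon(\varphi)\subseteq\mathcal{I}_+(\varphi)$, the map $\mathcal{I}_+(\varphi)\to\mathcal{I}(S,(1+\epsilon)\varphi|_S)$ is also onto, while its image is $\mathcal{I}_+(\varphi)|_S\subseteq\mathcal{I}(S,(1+\epsilon)\varphi|_S)$ by (i); hence $\mathcal{I}_+(\varphi)|_S=\mathcal{I}(S,(1+\epsilon)\varphi|_S)$, independent of $\epsilon\in(0,s_1]$, which settles the ``moreover''. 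Finally both $\adj_S^\epsilon(\varphi)$ and $\mathcal{I}_+(\varphi)$ contain $\mathcal{I}_+(\varphi)\otimes\mathcal{O}(-S)$ and surject onto $\mathcal{I}_+(\varphi)|_S$, so they coincide once the two restriction kernels are shown to equal $\mathcal{I}_+(\varphi)\otimes\mathcal{O}(-S)$. This is the one delicate point, and I expect it to be the main obstacle: one must show that for a general $S$ and $0<\epsilon\le s_1$,
$$\int_{U_x}|s|^{\epsilon}|g|^2e^{-2(1+\epsilon)\varphi}<\infty\ \Longleftrightarrow\ g\in\mathcal{I}_+(\varphi)_x\ \Longleftrightarrow\ \int_{U_x}|s|^2|g|^2e^{-2(1+\epsilon)\varphi}<\infty,$$
i.e. that the weight $|s|^{\epsilon}$ (resp. $|s|^2$) can be removed under the integral. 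The ``$\Leftarrow$'' directions are boundedness; for ``$\Rightarrow$'' one uses that a general member of a very ample system meets no divisorial component of the Siu decomposition of $dd^c\varphi$, so $\varphi$ has vanishing generic Lelong number along $S$, and combines this with a H\"older estimate and the openness/Noetherian property $\mathcal{I}((1+\epsilon)\varphi)=\mathcal{I}_+(\varphi)$ to recover the missing integrability away from the lower-dimensional locus where $\varphi|_S$ is genuinely singular. Granting this, (iii) is then immediate: restriction gives $0\to\mathcal{I}_+(\varphi)\otimes\mathcal{O}(-S)\to\adj_S^\epsilon(\varphi)\to\mathcal{I}(S,(1+\epsilon)\varphi|_S)\to0$, and by the ``moreover'' the last term equals $\mathcal{I}(S,(1+s_1)\varphi|_S)=\mathcal{I}_+(S,\varphi|_S)$. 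So the whole lemma rests on the Ohsawa--Takegoshi theorem for the surjectivity and on genericity of $S$ for the restriction theorem and for the ``dividing by $|s|$ under the integral'' step, the latter being where the real work lies.
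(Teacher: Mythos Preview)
Your treatment of (i) and (ii) is essentially the paper's (countability of jumping coefficients plus the restriction theorem and Ohsawa--Takegoshi). For (iii) and (iv), however, your route diverges from the paper's, and while your five-lemma comparison can be made to work, it is more circuitous and your sketch of the ``delicate point'' is heavier than necessary.

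For (iv), the paper proves the inclusion $\mathcal{I}_+(\varphi)\subseteq\adj_S^\epsilon(\varphi)$ \emph{directly} by a short Fubini argument over the linear system: fixing a basis $s_0,\dots,s_N$ of $H^0(X,A)$ and $f\in\mathcal{I}((1+\epsilon)\varphi)$, one integrates
\[
\int_{\sum|\tau_i|^2=1}\!d\tau\int_{U_x}\frac{|f|^2}{|\sum_i\tau_i s_i|^{2-\epsilon}}\,e^{-2(1+\epsilon)\varphi}\,dV,
\]
swaps the order, and observes that after a unitary rotation the $\tau$-integral reduces to $\int|\tau_0|^{-(2-\epsilon)}d\tau<\infty$ since $2-\epsilon<2$; hence the $x$-integral is finite for almost every $\tau$, and one generic section works for all $f$ by finite generation. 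This bypasses your kernel/image comparison entirely.

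For the middle exactness in (iii) --- exactly your ``delicate point'' $\int|s|^\epsilon|g|^2e^{-2(1+\epsilon)\varphi}<\infty\Rightarrow g\in\mathcal{I}_+(\varphi)$ --- the paper uses a one-line H\"older interpolation: with $\epsilon'=\epsilon/4$,
\[
\int|g|^2e^{-2(1+\epsilon')\varphi}\le\Bigl(\int|s|^\epsilon|g|^2e^{-2(1+\epsilon)\varphi}\Bigr)^{\frac{1+\epsilon/4}{1+\epsilon}}\Bigl(\int|g|^2|s|^{-\alpha}\Bigr)^{\frac{3\epsilon/4}{1+\epsilon}},
\]
where the exponent $\alpha$ works out to be $<2$, so the second factor is finite simply because $S$ is a smooth divisor. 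No Siu decomposition or generic-Lelong-number argument is needed. Your second implication (removing $|s|^2$ rather than $|s|^\epsilon$) does \emph{not} follow from this H\"older trick, but it is the purely algebraic statement that a general $s$ is a non-zero-divisor on $\mathcal{O}_X/\mathcal{I}_+(\varphi)$, i.e.\ that $S$ avoids the finitely many associated primes of this fixed coherent sheaf; again the Siu decomposition is beside the point. So your approach is salvageable, but the paper's two direct arguments (Fubini over $|A|$ for (iv), pure H\"older for (iii)) are shorter and cleaner, and (iii)--(iv) are proved independently rather than in a loop.
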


\begin{proof}

$(i)$ By the Notherian property, except for countably many $s\in \mathbb{R}^{+}$, 
we have
$$\mathcal{I}(s\varphi)=\mathcal{I}((s+\delta)\varphi)$$
for $\delta>0$ small enough. Therefore there exists a countable set $I\subset \mathbb{R}^{+}$ 
such that for any $ t\in\mathbb{R}^{+}$, 
we can find an $\alpha\in I$ such that
$$\mathcal{I}(\alpha\varphi)=\mathcal{I}(t\varphi).\leqno (1.3)$$
Since $I$ is a countable set, 
we can take $S$ to be very general in such a way that the restrictions
$$\mathcal{I}(\alpha\varphi)\rightarrow \mathcal{I}(S, \alpha\varphi\mid_{S}) $$
are well defined for all $\alpha\in I$. 
Then $(1.3)$ implies that $(1.1)$ and $(1.2)$ are well defined for all $ m, \delta\geq 0$.

For the second part of $(i)$, if the restrictions
$$\mathcal{I}(\varphi)\rightarrow \mathcal{I}(S, \varphi\mid_{S})$$
$$\mathcal{I}((1+\delta)\varphi)\rightarrow \mathcal{I}(S, (1+\delta)\varphi\mid_{S})$$
are well defined, then the Ohsawa-Takegoshi extension theorem and $(*)$ imply immediately that
$$\mathcal{I}(S,(1+\epsilon')\varphi\mid_{S})=\mathcal{I}(S, (1+s_{1})\varphi\mid_{S}).$$

$(ii)$ Since $\{\varphi_{k}\}$ is a quasi-equisingular approximation to $\varphi$, 
we have
$$\mathcal{I}(m(1+\delta)\varphi_{k})\subset \mathcal{I}(m\varphi)
\qquad\text{for any}\hspace{5 pt} k\geq k_{0}(\delta, m) .$$
Using $(1.1),(1.2)$ and the Ohsawa-Takegoshi extension theorem, we have
$$\mathcal{I}(S, m(1+\delta)\varphi_{k}\mid_{S})\subset \mathcal{I}(S, m\varphi\mid_{S})
\qquad\text{for any}\hspace{5 pt} k\geq k_{0}(\delta, m) .$$
Therefore $\varphi_{k} \mid_{S} $ is a quasi-equisingular approximation to $\varphi\mid_{S}$.
\vspace{5 pt}

$(iii)$ First of all, the Ohsawa-Takegoshi extension theorem implies the surjectivity of the sequence.
We need only to prove the exactness of the middle term,
i.e., for any $f\in \mathcal{O}_{x}$ satisfying the conditions
$$\frac{f}{s}\in \mathcal{O}_{x} 
\qquad\text{and}\qquad
\int_{U_{x}} \frac{\mid f\mid^{2}}{\mid
s\mid^{2(1-\frac{\epsilon}{2})}}e^{-2(1+\epsilon)\varphi}< +\infty ,\leqno
(1.4)$$
we should prove the existence of some $ \epsilon'>0$ such that
$$\int\frac{\mid f\mid^{2}}{\mid s\mid^{2}}e^{-2(1+\epsilon')\varphi}< +\infty ,\leqno (1.5)$$
where $s$ is a local function defining $S$.
In fact, if $\frac{f}{s}\in \mathcal{O}_{x}$, then
$$\int_{U_{x}}\frac{\mid f\mid^{2}}{\mid s\mid^{4-\delta}}< +\infty
\qquad\text{for any } \delta>0 .\leqno (1.6)$$
If we take $\epsilon'=\frac{\epsilon}{4}$ in the left side of $(1.5)$, then
$$\int_{U_{x}}\frac{\mid f\mid^{2}}{\mid s\mid^{2}}e^{-2(1+\frac{\epsilon}{4})\varphi}\leqno (1.7)$$
$$\leq (\int_{U_{x}} \frac{\mid f\mid^{2}}{\mid
s\mid^{2(1-\frac{\epsilon}{2})}}e^{-2(1+\epsilon)\varphi})^{\frac{1+\frac{
\epsilon}{4}}{1+\epsilon}}(\int_{U_{x}} \frac{\mid f\mid^{2}}{\mid
s\mid^{\alpha}})^{\frac{\frac{3\epsilon}{4}}{1+\epsilon}}$$
by H\"{o}lder's inequality, where
$$\alpha=(2-2(1-\frac{\epsilon}{2})\frac{1+\frac{\epsilon}{4}}{1+\epsilon})\cdot
(1+\epsilon)\cdot\frac{4}{3\epsilon}=\frac{10\epsilon+\epsilon^{2}}{3\epsilon}<
4 .$$
Thanks to $(1.4)$ and $(1.6)$, the second line of $(1.7)$ is finite. 
Thus $(1.5)$ is proved.
\vspace{5 pt}

$(iv)$ By the definition of $\mathcal{I}_{+}(\varphi)$, we have an obvious inclusion
$$\adj_{S}^{\epsilon}(\varphi)\subset\mathcal{I}_{+}(\varphi) .$$
In order to prove the equality, it is enough to show that
for any $ f\in \mathcal{I}((1+\epsilon)\varphi)_{x}$, we have 
$$\int_{U_{x}} \frac{\mid f\mid^{2}}{\mid
s\mid^{2(1-\frac{\epsilon}{2})}}e^{-2(1+\epsilon)\varphi}dV < +\infty, \leqno
(1.8)$$
where $s$ is a general global section of $H^{0}(X,A)$ independent of the choice of $f$ and $x$.

$(1.8)$ comes from the Fubini theorem. 
In fact, let $\{s_{0},\cdots,s_{N}\}$ be a basis of $H^{0}(X,A)$. 
Then 
$$\sum_{i=0}^{N} |s_{i}(x)|^{2}\neq 0\qquad\text{for any}\hspace{5 pt} x\in X.$$
Taking $\{\tau_{0},\cdots ,\tau_{N}\}\in \mathbb{C}^{N+1}$, we have
$$\int_{\sum\limits_{i=0}^{N} |\tau_{i}|^{2}=1} d\tau 
\int_{U_{x}}  \frac{\mid f\mid^{2}}{\mid \sum\limits_{i=0}^{N}
\tau_{i}s_{i}\mid^{2(1-\frac{\epsilon}{2})}}e^{-2(1+\epsilon)\varphi} d V\leqno
(1.9)$$
$$=\int_{U_{x}}  \frac{\mid f\mid^{2}}{\mid \sum\limits_{i=0}^{N}
|s_{i}(x)|^{2}\mid^{(1-\frac{\epsilon}{2})}}e^{-2(1+\epsilon)\varphi} d V 
\int_{\sum\limits_{i=0}^{N} |\tau_{i}|^{2}=1}\frac{1}{(\sum\limits_{i=0}^{N}
\tau_{i}\frac{s_{i}}{\sum\limits_{i=0}^{N}
|s_{i}(x)|^{2}})^{2(1-\frac{\epsilon}{2})}} d \tau$$
$$=\int_{U_{x}}  \frac{\mid f\mid^{2}}{\mid \sum\limits_{i=0}^{N}
|s_{i}(x)|^{2}\mid^{(1-\frac{\epsilon}{2})}}e^{-2(1+\epsilon)\varphi} d V
\int_{\sum\limits_{i=0}^{N}
|\tau_{i}|^{2}=1}\frac{1}{|\tau_{0}|^{2(1-\frac{\epsilon}{2})}}d \tau <+\infty$$
For any $f\in \mathcal{I}((1+\epsilon)\varphi)_{x}$ fixed,
applying the Fubini theorem to $(1.9)$, we obtain 
$$\int_{U_{x}}  \frac{\mid f\mid^{2}}{\mid
s\mid^{2(1-\frac{\epsilon}{2})}}e^{-2(1+\epsilon)\varphi}< +\infty \leqno (1.10)$$
for a general element $s\in H^{0}(X,A)$.
Observing that $\mathcal{I}((1+\epsilon)\varphi)$ is finitely generated on $X$, 
we can thus choose a general section $s$ such that
$(1.10)$ is true for any $f\in \mathcal{I}((1+\epsilon)\varphi)$.
The equality $\adj_{S}^{\epsilon}(\varphi)=\mathcal{I}_{+}(\varphi)$ is proved.
\end{proof}

The next proposition confirms that our definition of numerical dimension coincides with Tsuji's definition.

\begin{prop}
If $(L,\varphi)$ is a pseudo-effective on a projective variety $X$ of dimension $n$, then 
$$ \nu_{\num} (L, \varphi) =\nd (L,\varphi) .$$
\end{prop}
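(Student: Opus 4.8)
The plan is to prove the two inequalities $\nu_{\num}(L,\varphi)\leq \nd(L,\varphi)$ and $\nd(L,\varphi)\leq \nu_{\num}(L,\varphi)$ separately.

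For $\nu_{\num}(L,\varphi)\leq \nd(L,\varphi)$: suppose $V$ is a subvariety of dimension $v$ on which $\varphi$ is well defined and such that $(V,L,\varphi)$ is big. Let $\pi:\widetilde V\to V$ be a desingularization; by bigness $\varlimsup_m h^0(\widetilde V, m\pi^*L\otimes\mathcal I(m\varphi\circ\pi))/m^v>0$. The goal is to translate this algebro-geometric bigness into non-vanishing of the cohomological product $\langle (i\Theta_\varphi)^v\rangle$. The idea is to take the (global-section) Bergman-type quasi-equisingular approximation $\{\varphi_k\}$ of $\varphi$ on $X$ (as constructed in Proposition 3.2, after first embedding the situation into a projective setting with an auxiliary ample $G$). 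One restricts to $V$: by Lemma 4.1$(ii)$ restrictions of quasi-equisingular approximations are quasi-equisingular, so $\{\varphi_k|_V\}$ (or rather its pullback to $\widetilde V$) is a quasi-equisingular approximation of $\varphi|_V$. Since $(V,L,\varphi)$ is big, a Riemann–Roch / holomorphic-Morse-inequality estimate on $\widetilde V$ forces $\int_{\widetilde V}(i\Theta_{\varphi_k\circ\pi}(L))_{\ac}^v\wedge(\text{positive})$ to stay bounded away from $0$; pushing this forward and using that the product $\langle(i\Theta_\varphi)^v\rangle$ dominates (is computed from) these approximating integrals shows $\langle(i\Theta_\varphi)^v\rangle\neq 0$, hence $\nd(L,\varphi)\geq v$. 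Taking the sup over all such $V$ gives the inequality.

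For the reverse inequality $\nd(L,\varphi)\leq \nu_{\num}(L,\varphi)$: let $v=\nd(L,\varphi)$, so $\langle(i\Theta_\varphi)^v\rangle\neq 0$. I want to produce a $v$-dimensional subvariety $V$ with $\varphi$ well defined on $V$ and $(V,L,\varphi)$ big. The natural device is cutting down by general hyperplane sections of a very ample $A$: let $S$ be the zero divisor of a general section of $H^0(X,A)$, iterate $n-v$ times to get $V=S_1\cap\cdots\cap S_{n-v}$ of dimension $v$; by Lemma 4.1$(i)$ (the genericity in the countable set $I$) $\varphi$ restricts well to $V$. On $V$ one has $\dim V=v=\nd$, and one checks $\nd(L|_V,\varphi|_V)=v=\dim V$: the point is that intersecting with a general ample divisor drops the numerical dimension by exactly one, since $\langle(i\Theta_\varphi)^v\rangle\cdot [A]^{n-v}\neq 0$ translates via the adjunction exact sequences of Lemma 4.1$(iii)$ into $\langle(i\Theta_{\varphi|_V})^v\rangle\neq 0$ on $V$. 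Then Proposition 1.1 (= Proposition 3.6), applied on the $v$-dimensional projective variety $V$ with $\nd(L|_V,\varphi|_V)=\dim V$, gives $\varliminf_m h^0(V, mL|_V\otimes\mathcal I(m\varphi|_V))/m^v>0$, which after desingularizing $V$ is exactly the bigness of $(V,L,\varphi)$. Hence $\nu_{\num}(L,\varphi)\geq v=\nd(L,\varphi)$.

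The main obstacle is the bookkeeping around restriction to general hyperplane sections: one must show simultaneously that (a) $\varphi$ and all the relevant multiplier ideals restrict faithfully to $V$ (handled by Lemma 4.1$(i)$ and the countability of the relevant threshold set $I$), (b) the cohomological intersection product behaves well under restriction — i.e. $\langle(i\Theta_\varphi)^v\rangle\cdot[A]^{n-v}\neq 0$ on $X$ really does give $\langle(i\Theta_{\varphi|_V})^v\rangle\neq 0$ on $V$ — which needs the Ohsawa–Takegoshi-based comparison of ideals on $X$ and on $S$ through the adjunction sequence of Lemma 4.1$(iii)$–$(iv)$, applied to a quasi-equisingular approximation and then passing to the limit, and (c) in the other direction, that the pullback to a desingularization of the restricted approximation remains quasi-equisingular so that bigness of $(V,L,\varphi)$ can be read off the approximating currents. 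Once these compatibilities are in place, both inequalities follow by combining Propositions 3.2 and 3.6 with the definitions.
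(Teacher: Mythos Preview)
Your argument for $\nd(L,\varphi)\leq\nu_{\num}(L,\varphi)$ is the paper's: cut by $n-v$ general hyperplane sections, use Lemma~4.1 to see that the restricted approximation stays quasi-equisingular so that $\nd$ of the restriction equals $v$, and apply Proposition~3.6. The paper phrases this as an induction on $\dim X$, but it is the same argument.

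For $\nu_{\num}(L,\varphi)\leq\nd(L,\varphi)$ there is a genuine gap. You invoke Lemma~4.1(ii) to say that a quasi-equisingular approximation of $\varphi$ on $X$ restricts to one on the subvariety $V$ realizing Tsuji's bigness. But Lemma~4.1 is stated and proved only for $S$ a \emph{general} hypersurface section of a very ample bundle; its proof relies on Ohsawa--Takegoshi together with genericity of $S$ relative to the countable set $I$ of jumping thresholds. The $V$ in Tsuji's definition is arbitrary, possibly singular, and may sit badly with respect to the singular loci of the $\varphi_k$. Even setting that aside, your ``push forward'' step hides the real difficulty: one must compare $\int_X(i\Theta_{\varphi_k})_{\ac}^v\wedge\omega^{n-v}$ with $\int_{\widetilde V}(i\Theta_{\varphi_k|_V})_{\ac}^v$, and for an arbitrary $V$ the class of $((i\Theta_{\varphi_k})_{\ac})|_V$ need not coincide with that of $(i\Theta_{\varphi_k|_V})_{\ac}$. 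The paper does not restrict a given approximation to $V$. Instead it blows up $X$ along $V$ to get $\pi:\widetilde X\to X$ with smooth strict transform $\widetilde V$, and builds a \emph{new} quasi-equisingular approximation $\varphi_m$ of $\varphi$ from the Bergman kernel of $H^0(\widetilde X,A+m\pi^*L\otimes\mathcal I(m\varphi\circ\pi))$. The point of this choice is the Ohsawa--Takegoshi surjection
\[
H^0(\widetilde X,A+m\pi^*L\otimes\mathcal I(m\varphi\circ\pi))\twoheadrightarrow H^0(\widetilde V,A+m\pi^*L\otimes\mathcal I(m\varphi\circ\pi)),
\]
which, via the extremal property of Bergman kernels, forces $(i\Theta_{\varphi_m\circ\pi})_{\ac}|_{\widetilde V}$ and the intrinsic Bergman approximation on $\widetilde V$ to lie in the same cohomology class (Lemma~4.3, Step~1). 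Only then does the Riemann--Roch lower bound on $\widetilde V$ (your step~3, the paper's Lemma~4.3, Step~2) together with $[V]\leq C_1\omega^{n-k}$ yield $\langle(i\Theta_\varphi)^v\rangle\neq 0$.
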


\begin{proof}

We first prove $\nu_{\num}(L, \varphi)\geq \nd (L,\varphi)$ by induction on dimension.
If $ \nd (L, \varphi) = n $, the inequality comes from Proposition 3.6.
Assume now $ \nd (L, \varphi) <n $. Let $ A $ be a general hypersurface given by a very ample line bundle
and let $\{\varphi_{k}\}$ be a quasi-equisingular approximation to $\varphi$.
Lemma 4.1 implies that
$\varphi_{k}\mid_{A}$ is also a quasi-equisingular approximation to $\varphi\mid_{A}$.
Since $ A $ is a general section and $ \nd (L, \varphi) <n $, we have
$$\lim\limits_{k\to \infty} \int_{A}((\frac{i}{2\pi}\Theta_{\varphi_{k}})_{\ac})^{s}\wedge \omega^{n-s-1}>0$$
where $s=\nd(L,\varphi)$.
Thus Definition 2 implies that
$$\nd(A,L,\varphi\mid_{A})\geq s=\nd(L,\varphi) . \leqno (2.1)$$
Notice moreover that the definition of $ \nu_{\num} $ implies
$$\nu_{\num}( L, \varphi)\geq \nu_{\num}(A, L, \varphi\mid_{A}). \leqno (2.2)$$
Combining $(2.1)$ and $(2.2)$,
the equality $\nu_{\num}(L, \varphi)\geq \nd (L,\varphi)$ is concluded 
by induction on dimension.
\vspace{5 pt}

We now prove $\nu_{\num}(L,\varphi)\leq \nd(L,\varphi) .$
Assume that $\nu_{\num} (L, \varphi)=k$. 
By definition, there exists a subvariety $ V $ of dimension $ k $ such that 
$$\varlimsup\limits_{m\to \infty}\frac{h^{0}(V, mL\otimes\mathcal{I}(m\varphi))}{m^{k}}> 0 .$$
Let $\pi:\widetilde{X}\rightarrow X$ be the desingularization of the ideal sheaf of $V$ in $X$,
and let $\widetilde{V}$ be the strict transform of $V$. 

By the Ohsawa-Takegoshi extension theorem, 
there exists a very ample line bundle $A$ on $\widetilde{X}$ 
such that the following restrictions are surjective
$$H^{0}(\widetilde{X}, A+m\pi^{*}(L)\otimes \mathcal{I}(m\varphi\circ\pi))
\rightarrow H^{0}(\widetilde{V}, A+m\pi^{*}(L)\otimes \mathcal{I}(m\varphi\circ\pi))\leqno (2.3)$$
for all $m>0$.
Fix a smooth metric $ h_{0} $ on $ L $ and 
let $ \{e_{i, m} \} $ be an orthonormal basis of 
$H^{0}(\widetilde{X}, A+m\pi^{*}(L)\otimes \mathcal{I}(m\varphi\circ\pi))$ 
with respect to $h_{A}\cdot h_{0}^{m} e^{-m\varphi\circ\pi}$.
It is well known that we can take a smooth function $\psi$
\footnote{we can take a smooth metric on the exceptional divisor $\mathcal{O}(-E)$, 
and the canonical section gives this function. See \cite{Bou} for details.}
independent of $m$ on $\widetilde{X}$ such that
$$\frac{1}{m} i\partial\overline{\partial} (\psi+ \ln\sum\limits_{i}
|e_{i,m}|_{h_{A}, h_{0}^{m}} ^{2} )\geq -C\pi^{*}\omega_{X}.$$
Then $\frac{1}{m} (\psi+ \ln\sum\limits_{i} |e_{i,m}|_{h_{A}, h_{0}^{m}} ^{2} )$
induces a quasi-psh fucntion on $X$
and we denote it by $\varphi_{m}$.
We claim that
$$\lim\limits_{m\to
\infty}\int_{X}(\frac{i}{2\pi}\Theta_{\varphi_{m}}(L)+\frac{1}{m}\omega)_{\ac} ^{k}\wedge
\omega^{n-k}>0 .\leqno (2.4)$$

We postpone the proof of $(2.4)$ in Lemma 4.3 and conclude first the proof of Proposition 4.2.
By Lemma 3.1, $\{\varphi_{m}\circ\pi\}$ is a quasi-equisingular approximation to $\varphi\circ\pi$.
Thanks to the formula 
$$\pi_{*}(K_{\widetilde{X}/ X}\otimes\pi^{*}\mathcal{I}(\varphi))=\mathcal{I}(\varphi),$$
$\{\varphi_{m}\}$ is a quasi-equisingular approximation to $\varphi$.
Therefore $(2.3)$ implies $\nd(L,\varphi)\geq k$. 
Since $\nu_{\num}(L,\varphi)=k$ by assumption, 
we conclude that $\nd(L,\varphi)\geq\nu_{\num}(L,\varphi)$.
The proposition is proved.
\end{proof}

\begin{remark}
From the proof, it is easy to conclude that 
if $ S_{1}, S_{2 },..., S_{k} $ are divisors of general global sections of a very ample line bundle, then
$$\nd(S_{1}\cap S_{2}\cap\cdots \cap S_{k}, L,\varphi)=\max (\nd(L, \varphi), n-k) .\leqno (*)$$
In fact, if $\nd(L, \varphi)\leq n-k$, by the same argument as above, 
$\varphi_{m}\mid_{S_{1}\cap S_{2}... \cap S_{k}}$ 
is also a quasi-equisingular approximation to $\varphi\mid_{S_{1}\cap S_{2}\cap\cdots
\cap S_{k}}$. 
Then $(*)$ is proved by simple calculation.
\end{remark}

We now prove Lemma 4.3 promised in the above proposition.
\begin{lemme}
We have 
$$\lim\limits_{m\to
\infty}\int_{X}(\frac{i}{2\pi}\Theta_{\varphi_{m}}(L)+\frac{1}{m}\omega)_{\ac} ^{k}\wedge
\omega^{n-k}>0.\leqno (3.1)$$
\end{lemme}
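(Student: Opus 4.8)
The plan is to reduce $(3.1)$ to an intersection estimate for the Kodaira map attached to the Bergman sections, and to feed in the bigness of $(V,L,\varphi)$ through the surjectivity $(2.3)$.

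First I would pull everything back by $\pi$. Since $\pi$ is a modification and $\varphi_{m}$ is, by construction, the push-forward to $X$ of the datum living on $\widetilde X$, the absolutely continuous parts are unchanged outside a measure-zero set, so the integral in $(3.1)$ equals $\int_{\widetilde X}(\tfrac{i}{2\pi}\Theta_{\varphi_{m}\circ\pi}(\pi^{*}L)+\tfrac1m\pi^{*}\omega)_{\ac}^{k}\wedge(\pi^{*}\omega)^{n-k}$. Writing $\Theta^{B}_{m}\ge 0$ for the curvature of the Bergman kernel metric of $A+m\pi^{*}L$ determined by $\{e_{i,m}\}$, one has $\tfrac{i}{2\pi}\Theta_{\varphi_{m}\circ\pi}(\pi^{*}L)=\tfrac1m\Theta^{B}_{m}+\tfrac1m\gamma$ with $\gamma$ a fixed smooth form, so off the base locus of the $e_{i,m}$ the semipositive form $\beta_{m}:=(\tfrac{i}{2\pi}\Theta_{\varphi_{m}\circ\pi}(\pi^{*}L)+\tfrac1m\pi^{*}\omega)_{\ac}$ equals $\tfrac1m\Phi_{m}^{*}\omega_{\mathrm{FS}}+O(1/m)$, where $\Phi_{m}\colon\widetilde X\dashrightarrow\mathbb{P}^{N_{m}}$ is the map given by $H^{0}(\widetilde X,A+m\pi^{*}L\otimes\mathcal I(m\varphi\circ\pi))$ and $\omega_{\mathrm{FS}}$ is the Fubini--Study form. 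Putting $M_{m}:=\Phi_{m}^{*}\mathcal O(1)=c_{1}(A)+m\pi^{*}c_{1}(L)-(\text{base divisor})=O(m)$ and expanding the $k$-th power, the integral in $(3.1)$ becomes $\tfrac1{m^{k}}M_{m}^{k}\!\cdot\!(\pi^{*}\omega)^{n-k}+O(1/m)$ (moving self-intersection), and in the same way $\int_{\widetilde V}\beta_{m}^{k}|_{\widetilde V}=\tfrac1{m^{k}}M_{m}^{k}\!\cdot\![\widetilde V]+O(1/m)$.

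Next I would replace $\omega^{n-k}$ by the cycle $[V]$. Since $\{\varphi_{m}\}$ is a quasi-equisingular approximation of $\varphi$ and $\{\varphi_{m}\circ\pi\}$ one of $\varphi\circ\pi$ (verified in the proof of Proposition 4.2 via Lemma 3.1 and the push-forward formula, and likewise the restrictions to $\widetilde V$ are quasi-equisingular by Lemma 4.1(ii)), Definition 3 gives
\[
\big\langle(\tfrac{i}{2\pi}\Theta_{\varphi}(L))^{k}\big\rangle\cdot[V]
=\lim_{m\to\infty}\int_{\widetilde V}\beta_{m}^{k}|_{\widetilde V}
=\lim_{m\to\infty}\tfrac1{m^{k}}M_{m}^{k}\!\cdot\![\widetilde V].
\]
If this quantity is positive, then $\langle(\tfrac{i}{2\pi}\Theta_{\varphi}(L))^{k}\rangle$ is a nonzero element of $H^{k,k}_{\ge 0}(X)$, hence has strictly positive mass against $\omega^{n-k}$; applying the same identity with $\omega^{n-k}$ in place of $[V]$ and using $\tfrac1m\omega\to 0$ then yields exactly $(3.1)$ (and re-proves $\nd(L,\varphi)\ge k$). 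So it is enough to find an infinite set $\mathcal M\subset\mathbb N$ and $c>0$ with $M_{m}^{k}\!\cdot\![\widetilde V]\ge c\,m^{k}$ for $m\in\mathcal M$.

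For this I would use $(2.3)$ essentially. By surjectivity of $(2.3)$, $\Phi_{m}|_{\widetilde V}$ is attached to the full space $H^{0}(\widetilde V,A|_{\widetilde V}+m\pi^{*}L\otimes\mathcal I(m\varphi\circ\pi))$, whose dimension is at least $h^{0}(\widetilde V,m\pi^{*}L\otimes\mathcal I(m\varphi\circ\pi))$ (twist down by a section of the very ample $A|_{\widetilde V}$), and the latter is $\ge c_{0}m^{k}$ along some infinite $\mathcal M$ because $V$, of dimension $k$, witnesses $\nu_{\num}(L,\varphi)=k$. Moreover, for $m\in\mathcal M$ pick $0\ne s_{0}\in H^{0}(\widetilde V,m\pi^{*}L\otimes\mathcal I(m\varphi\circ\pi))$; the subsystem $H^{0}(\widetilde V,A|_{\widetilde V})\cdot s_{0}$ already embeds $\widetilde V$ away from $\{s_{0}=0\}$, so $\Phi_{m}|_{\widetilde V}$ is generically injective onto an irreducible $k$-dimensional subvariety of $\mathbb{P}^{N_{m}}$, nondegenerate in the $\mathbb{P}^{r}$ it spans, with $r\ge c_{0}m^{k}-1$. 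Since such a variety has degree $\ge r-k+1$, we obtain $M_{m}^{k}\!\cdot\![\widetilde V]=\deg\Phi_{m}(\widetilde V)\ge c_{0}m^{k}-k\ge\tfrac{c_{0}}{2}m^{k}$ for large $m\in\mathcal M$, which completes the argument.

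I expect the main obstacle to be not this last estimate but the bookkeeping that makes the reductions legitimate: checking that the absolutely continuous parts commute with the modification $\pi$ and with restriction to $\widetilde V$ (valid because $\varphi_{m}$ has analytic singularities and $\widetilde V$ lies outside the base divisor of the $e_{i,m}$), that the restricted sequence is genuinely a quasi-equisingular approximation of $(\varphi\circ\pi)|_{\widetilde V}$ so that the displayed limit is the product of Definition 3, and that the $O(1/m)$ errors in the first step are uniform in $m$, which rests on $M_{m}=O(m)$ together with the boundedness of the mixed intersection numbers in degree $j<k$ that appear when expanding the $k$-th power.
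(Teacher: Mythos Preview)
Your argument is sound in outline and reaches the goal, but by a genuinely different route than the paper. The paper's proof is two short steps. First, since $(\frac{i}{2\pi}\Theta_{\varphi_{m}}(L)+\frac{1}{m}\omega)_{\ac}$ is the push-forward of a nef class from a log resolution and $[V]\le C_{1}\omega^{n-k}$ in cohomology, one gets directly the inequality
\[
\int_{X}\Bigl(\tfrac{i}{2\pi}\Theta_{\varphi_{m}}(L)+\tfrac{1}{m}\omega\Bigr)_{\ac}^{k}\wedge\omega^{n-k}\;\ge\;C_{1}\int_{\widetilde V}\Bigl(\tfrac{i}{2\pi}\Theta_{\varphi_{m}\circ\pi}(L)+\tfrac{1}{m}\omega\Bigr)_{\ac}^{k},
\]
and the surjectivity $(2.3)$ together with the extremal property of Bergman kernels shows that $\varphi_{m}\circ\pi|_{\widetilde V}$ and the intrinsic Bergman potential $\varphi_{m}'$ on $\widetilde V$ differ by a smooth nowhere-vanishing factor, so the right-hand integral equals $\int_{\widetilde V}(\frac{i}{2\pi}\Theta_{\varphi_{m}'}(L)+\frac{1}{m}\omega)_{\ac}^{k}$. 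Second, because $\varphi_{m}'$ has analytic singularities, asymptotic Riemann--Roch gives $h^{0}(\widetilde V,A+sL\otimes\mathcal I(s\varphi_{m}'))=C_{1}s^{k}\int_{\widetilde V}(\frac{i}{2\pi}\Theta_{\varphi_{m}'})_{\ac}^{k}+O(s^{k-1})$; since $\varphi_{m}'\preccurlyeq\varphi$ and $(V,L,\varphi)$ is big, the left side is $\ge C_{0}s^{k}$ along a sequence, and the integral is uniformly bounded below. Your replacement of Riemann--Roch by the classical degree bound $\deg\Phi_{m}(\widetilde V)\ge r-k+1$ for a nondegenerate $k$-fold in $\mathbb P^{r}$ is a legitimate and attractive alternative; what it costs is exactly the bookkeeping you flag. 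In particular your displayed identity $\langle(\frac{i}{2\pi}\Theta_{\varphi}(L))^{k}\rangle\cdot[V]=\lim_{m}\int_{\widetilde V}\beta_{m}^{k}$ is not literally Definition~3 (which pairs against a smooth representative of $[V]$ on $X$, while the ac-parts are closed only after a further log resolution, and $\pi^{*}[V]$ need not coincide with $[\widetilde V]$), and the uniform control of the cross-terms $M_{m}^{j}\cdot(\text{bounded})^{k-j}\cdot\omega^{n-k}$ for $j<k$ requires an argument for nef classes of growing size. The paper's single inequality $[V]\le C_{1}\omega^{n-k}$, combined with nef-ness, bypasses all of this in one line.
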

\begin{proof}

Let $ \{e_{i, m }' \} $ be an orthonormal basis of 
$H^{0}(\widetilde{V}, A+m\pi^{*}(L)\otimes \mathcal{I}(m\varphi\circ\pi))$  
with respect to $ h_{A}\cdot h_{0}^{m} e^{-m\varphi\circ\pi} $ and
let $\varphi_{m}'=\frac{1}{m} (\psi+ \ln\sum\limits_{i} |e_{i,m}'|_{h_{A}, h_{0}^{m}} ^{2} )$ 
which is a psh function on $\widetilde{V}$.
We prove $(3.1)$ in two steps.
\vspace{5 pt}

\textbf{Step 1:}
We first prove that 
$$\int_{X}(\frac{i}{2\pi}\Theta_{\varphi_{m}}(L)+\frac{1}{m}\omega)_{\ac} ^{k}\wedge
\omega^{n-k}\geq
C_{1}\int_{\widetilde{V}}(\frac{i}{2\pi}\Theta_{\varphi_{m}\circ\pi}(L)+\frac{1}{m}\omega)_{\ac}
^{k}=C_{1}\int_{\widetilde{V}}(\frac{i}{2\pi}\Theta_{\varphi_{m}'}(L)+\frac{1}{m}\omega)_{\ac}
^{k}$$
for some constant $ C_{1}> 0 $.
Noting that we abuse a little bit the notation here. 
We denote $L$ also for the pull back of $L$ on $\widetilde{X}$.

In fact, there exists a $ C_{1} $ such that $[V]\leqslant C_{1}\omega^{n-k}$ in the sense of cohomology class,
and $(\frac{i}{2\pi}\Theta_{\varphi_{m}}(L)+\frac{1}{m}\omega)_{\ac}$ is the push forward of a nef class.
Therefore
$$\int_{X}(\frac{i}{2\pi}\Theta_{\varphi_{m}}(L)+\frac{1}{m}\omega)_{\ac} ^{k}\wedge \omega^{n-k}
\geq C_{1}\int_{V \setminus V_{\sing}}(\frac{i}{2\pi}\Theta_{\varphi_{m}}(L)+\frac{1}{m}\omega)_{\ac} ^{k}.$$
Since $\widetilde{V}$ is the strict transformation of $V$, we have
$$\int_{V}(\frac{i}{2\pi}\Theta_{\varphi_{m}}(L)+\frac{1}{m}\omega)_{\ac}
^{k}=\int_{\widetilde{V}}(\frac{i}{2\pi}\Theta_{\varphi_{m}\circ\pi}(L)+\frac{1}{m}\omega)_{\ac}
^{k} .$$
We obtain thus the first inequality.

We now prove the second equality. 
Since $\psi$ is a smooth function on $\widetilde{X}$, 
we have $(dd^{c}\psi)_{\ac}=dd^{c}\psi$.
Therefore $(dd^{c}\psi)_{\ac}$ has no contribution for the integral on $\widetilde{V}$
by a cohomological reason.
We can thus suppose for simplicity that
$$\frac{i}{2\pi}\Theta_{\varphi_{m}}(L)=\frac{i}{2\pi}\Theta_{h_{0}}(L)+\frac{1}{m}dd^{c}\ln \sum_{i}
|e_{i,m}|_{h_{A}, h_{0}^{m}} ^{2}$$
and
$$\frac{i}{2\pi}\Theta_{\varphi_{m}'}(L)=\frac{i}{2\pi}\Theta_{h_{0}}(L)+\frac{1}{m}dd^{c}\ln \sum_{i}
|e_{i,m}'|_{h_{A}, h_{0}^{m}} ^{2}.$$
Thanks to the surjectivity of $(2.3)$ in Proposition 4.2 and the extremal property of Bergman kernels,
$$\frac {\sum\limits_{i} |e_{i,m}|_{h_{A}, h_{0}^{m}} ^{2}} {\sum\limits_{i}
|e_{i,m}'|_{h_{A}, h_{0}^{m}} ^{2}}$$
is thus a smooth function and does not vanish on $\widetilde{V}$. 
Therefore $(\frac{i}{2\pi}\Theta_{\varphi_{m}\circ\pi})_{\ac}(L)\mid_{\widetilde{V}}$ 
and $(\frac{i}{2\pi}\Theta_{\varphi_{m}'})_{\ac}(L)\mid_{\widetilde{V}}$ are in the same cohomology class. 
The equality is proved.
\vspace{5 pt}

\textbf{Step 2:}
We prove in this step the existence of a uniform constant $C>0$ such that
$$\int_{\widetilde{V}}(\frac{i}{2\pi}\Theta_{\varphi_{m}'}(L))_{\ac} ^{k}\geq C \qquad\text{for } m\gg 1.$$

For any $ m $ fixed, since $\varphi_{m}'$ is less singular than $\varphi$, 
we have
$$h^{0}(\widetilde{V}, A+sL\otimes \mathcal{I}(s\varphi))\leq
h^{0}(\widetilde{V}, A+sL\otimes\mathcal{I}(s \varphi_{m}')) . \leqno (3.2)$$
Notice that by the bigness of $(V, L, \varphi) $ we have
$$\varlimsup\limits_{s\to \infty}\frac{h^{0}(\widetilde{V},
A+sL\otimes\mathcal{I}(s\varphi))}{s^{k}}\geq C_{0}$$
for some constant $C_{0}>0$.
Then $(3.2)$ implies
$$h^{0}(\widetilde{V}, A+sL\otimes\mathcal{I}(s\varphi_{m}'))
\geq h^{0}(\widetilde{V}, A+sL\otimes\mathcal{I}(s \varphi))\geq C_{0}
s^{k}\leqno (3.3)$$
for a sequence $s\rightarrow\infty$.

On the other hand, since $ \varphi_{m}'$ has analytic singularities, we have
$$h^{0}(\widetilde{V}, A+sL\otimes\mathcal{I}(s\varphi_{m}'))= 
C_{1} s^{k}\int_{\widetilde{V}}(\frac{i}{2\pi}\Theta_{\varphi_{m}'})_{\ac} ^{k} +O (s^{k-1})
\leqno (3.4)$$
for $m\gg 1$.
Combining $(3.3)$ and $(3.4)$, we get
$$\int_{\widetilde{V}}(\frac{i}{2\pi}\Theta_{\varphi_{m}'}(L))_{\ac} ^{k}\geq \frac{C_{0}}{C_{1}}
> 0 \qquad\text{for } m\gg 1 .$$
Step 2 is proved. 

Combining Steps 1 and 2, the lemma is proved.
\end{proof}

We now give a numerical criterion to calculate the numerical dimension.
\begin{prop}
Let $(L,\varphi)$ be a pseudo-effective line bundle on a projectvie variety $X$, 
and let $ A $ be a very ample line bundle. 
Then $ \nd (L, \varphi) = d $ if and only if
$$\lim\limits_{\epsilon\to 0}\frac{\ln(\varlimsup\limits_{m\rightarrow \infty}
\frac{h^{0}(X, m\epsilon A+mL\otimes \mathcal{I}(m\varphi))}{m^{n}})}{\ln
\epsilon}=n-d .$$
\end{prop}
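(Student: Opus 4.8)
The plan is to show that the left‑hand side always exists as a limit and equals $n-\nd(L,\varphi)$; this makes both directions of the ``if and only if'' immediate. Write $v(\epsilon)=\varlimsup_{m\to\infty}m^{-n}h^{0}(X,m\epsilon A+mL\otimes\mathcal{I}(m\varphi))$ for $\epsilon\in\mathbb{Q}_{>0}$ (with $m$ ranging over multiples of the denominator) and set $d=\nd(L,\varphi)$. One may assume $d<n$: when $d=n$, Proposition 1.1 gives $v(\epsilon)\ge v(0)>0$ while trivially $v(\epsilon)\le v(1)<\infty$, so $\ln v(\epsilon)/\ln\epsilon\to 0=n-d$. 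The goal is then the two‑sided estimate $c\,\epsilon^{\,n-d}\le v(\epsilon)\le C\,\epsilon^{\,n-d}$ for all small $\epsilon>0$, which forces $\lim_{\epsilon\to 0}\ln v(\epsilon)/\ln\epsilon=n-d$.

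For the lower bound I would argue by extending sections along a flag. Pick general members of $|A|$ cutting out a flag $V_{d}\subset V_{d+1}\subset\cdots\subset V_{n}=X$ of smooth complete intersections with $\dim V_{j}=j$; by the computation in the proof of Proposition 4.2 (a general hyperplane section preserves $\nd$ as long as it is $<\dim$) one gets $\nd(V_{d},L,\varphi|_{V_{d}})=d=\dim V_{d}$, hence $h^{0}(V_{d},mL|_{V_{d}}\otimes\mathcal{I}(m\varphi|_{V_{d}}))\ge c\,m^{d}$ for $m\gg 1$ by Proposition 1.1. I would then prove by induction on $j$ that $h^{0}(V_{j},rA|_{V_{j}}+mL|_{V_{j}}\otimes\mathcal{I}(m\varphi|_{V_{j}}))\ge c_{j}(r/m)^{\,j-d}m^{\,j}$ whenever $0\le r/m\le\epsilon_{0}$: filter $H^{0}(V_{j},rA|_{V_{j}}+mL|_{V_{j}}\otimes\mathcal{I}(m\varphi|_{V_{j}}))$ by the order of vanishing along $V_{j-1}$; via the Ohsawa--Takegoshi extension theorem through the smooth divisor $V_{j-1}\in|A|_{V_{j}}|$ (whose positivity hypotheses are met because $A$ is very ample and $mL$ carries the metric $m\varphi$) the $t$‑th graded piece contains every section of $(r-t)A|_{V_{j-1}}+mL|_{V_{j-1}}\otimes\mathcal{I}(m\varphi|_{V_{j-1}})$ multiplied by the $t$‑th power of the defining section of $V_{j-1}$, so its dimension is $\ge c_{j-1}\big((r-t)/m\big)^{\,j-1-d}m^{\,j-1}$ by induction; summing over $0\le t\le r/2$ produces the extra factor $\gtrsim r/m$. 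Taking $j=n$ and $r=m\epsilon$ yields $v(\epsilon)\ge c\,\epsilon^{\,n-d}$.

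For the upper bound I would compare with a quasi‑equisingular approximation. Let $\omega$ be a Kähler form with $[\omega]=c_{1}(A)$ and let $\{\varphi_{k}\}$ be a quasi‑equisingular approximation of $\varphi$ for $\tfrac{i}{2\pi}\Theta_{\varphi}(L)$ relative to $\omega$. Since each $\varphi_{k}$ is less singular than $\varphi$ we have $\mathcal{I}(m\varphi)\subset\mathcal{I}(m\varphi_{k})$, hence $v(\epsilon)\le v_{k}(\epsilon):=\varlimsup_{m}m^{-n}h^{0}(X,m\epsilon A+mL\otimes\mathcal{I}(m\varphi_{k}))$ for every $k$. As $\varphi_{k}$ has analytic singularities, a log resolution $\pi_{k}\colon X_{k}\to X$, the identity $\pi_{k*}(K_{X_{k}/X}\otimes\mathcal{I}(\varphi_{k}\circ\pi_{k}))=\mathcal{I}(\varphi_{k})$ and the projection formula identify $v_{k}(\epsilon)$ with $\vol_{X_{k}}\big(\epsilon\,\pi_{k}^{*}[\omega]+[\beta_{k}]\big)$, where $\beta_{k}=\big(\pi_{k}^{*}\tfrac{i}{2\pi}\Theta_{h_{0}}(L)+dd^{c}(\varphi_{k}\circ\pi_{k})\big)_{\ac}$ is a smooth form with $\beta_{k}\ge-\tau_{k}\pi_{k}^{*}\omega$ (the relative canonical twist and the fractional parts of the rounded divisor affect only lower order terms in $m$). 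The class $\epsilon\pi_{k}^{*}[\omega]+[\beta_{k}]$ is dominated by the nef class of the semipositive form $(\epsilon+\tau_{k})\pi_{k}^{*}\omega+\beta_{k}$, so $v_{k}(\epsilon)\le\int_{X_{k}}\big((\epsilon+\tau_{k})\pi_{k}^{*}\omega+\beta_{k}\big)^{n}$; expanding this polynomial, discarding the terms containing a factor $\tau_{k}$ (bounded and tending to $0$), and using $\int_{X_{k}}\beta_{k}^{\,j}\wedge(\pi_{k}^{*}\omega)^{\,n-j}\to\langle T^{j}\rangle\cdot[\omega]^{\,n-j}$ with $T=\tfrac{i}{2\pi}\Theta_{\varphi}(L)$ (Definition 3), one obtains $v(\epsilon)\le\inf_{k}v_{k}(\epsilon)\le\sum_{j=0}^{n}\binom{n}{j}\epsilon^{\,n-j}\langle T^{j}\rangle\cdot[\omega]^{\,n-j}$. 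By the definition of $\nd$, $\langle T^{j}\rangle=0$ for $j>d$ and $\langle T^{d}\rangle\neq 0$ (so $\langle T^{d}\rangle\cdot[\omega]^{\,n-d}>0$, a nonzero class in $H^{d,d}_{\ge 0}(X)$ paired with a power of a Kähler class), whence the right‑hand side equals $\binom{n}{d}\langle T^{d}\rangle\cdot[\omega]^{\,n-d}\,\epsilon^{\,n-d}+O(\epsilon^{\,n-d+1})\le C\,\epsilon^{\,n-d}$.

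The hard part is the upper bound, and inside it the passage $k\to\infty$: the volume is not continuous, so one must bound $v_{k}(\epsilon)$ by the top self‑intersection of a genuinely nef class on $X_{k}$ and only then let $k\to\infty$ through Definition 3, being careful that the $\tau_{k}$‑terms and the rounding/relative‑canonical corrections are uniformly harmless. A secondary technical point, needed in the lower bound, is that for general hyperplane sections the multiplier ideals — and their higher ``jets'' along the divisor — restrict correctly, a Fubini argument in the spirit of Lemma 4.1.
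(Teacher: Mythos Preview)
Your argument is correct and takes a genuinely different route from the paper's. The paper first replaces $\mathcal{I}(m\varphi)$ by $\mathcal{I}_{+}(m\varphi)$ and then proves, by induction on the codimension $s$ of a general complete intersection $Y_{1}\cap\cdots\cap Y_{s}$, the precise expansion
\[
\frac{1}{m^{\,n-s}}\,h^{0}\bigl(Y_{1}\cap\cdots\cap Y_{s},\,m\epsilon A+mL\otimes\mathcal{I}_{+}(m\varphi)\bigr)
=\frac{C(\epsilon,m)}{(n-d-s)!}\,\epsilon^{\,n-s-d}+O(\epsilon^{\,n-s-d+1})+O(1/m);
\]
the induction step (Lemma~4.5) resolves $mL\otimes\mathcal{I}_{+}(m\varphi)$ on $Y$ by a Koszul complex of its restrictions to intersections of $m\epsilon$ general members of $|A|$, uses the adjoint-ideal exact sequence of Lemma~4.1(iii)(iv) to make these restrictions compatible with $\mathcal{I}_{+}$, and invokes Nadel vanishing so that the hypercohomology computation collapses to an alternating sum of $h^{0}$'s. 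You instead establish the two bounds $c\,\epsilon^{\,n-d}\le v(\epsilon)\le C\,\epsilon^{\,n-d}$ separately: the lower one by iterated Ohsawa--Takegoshi extension along the same flag (used constructively rather than cohomologically), and the upper one by comparing with a quasi-equisingular $\varphi_{k}$, passing to a log resolution where $v_{k}(\epsilon)$ becomes a volume, dominating by the top self-intersection of the nef class $(\epsilon+\tau_{k})\pi_{k}^{*}[\omega]+[\beta_{k}]$, and letting $k\to\infty$ through Definition~3 to obtain $v(\epsilon)\le\sum_{j\le d}\binom{n}{j}\epsilon^{\,n-j}\langle T^{j}\rangle\cdot[\omega]^{\,n-j}$. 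The paper's approach yields the sharper leading coefficient at the cost of the $\mathcal{I}_{+}$/adjoint-ideal machinery and a hypercohomology argument; yours is lighter, bypasses $\mathcal{I}_{+}$, and makes the link to the movable product transparent, but only recovers the order of growth.
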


\begin{proof}

First of all, the inclusion
$$H^{0}(X, m\epsilon A+mL\otimes \mathcal{I}(m\varphi))\supset H^{0}(X,
m\epsilon A+mL\otimes \mathcal{I}_{+}(m\varphi)) $$
$$\supset H^{0}(X, m\epsilon A+mL\otimes \mathcal{I}((m+1)\varphi)),$$
implies that $h^{0}(X, m\epsilon A+mL\otimes \mathcal{I}_{+}(m\varphi))$ has the same
asymptotic comportment as $h^{0}(X, m\epsilon A+mL\otimes
\mathcal{I}(m\varphi))$.
Since we have constructed the exact sequence for $\mathcal{I}_{+}$ in Lemma 4.1, 
we prefer to calculate $h^{0}(X, m\epsilon A+mL\otimes \mathcal{I}_{+}(m\varphi))$ in the following argument.

If $ \nd(L, \varphi) = n $, the proposition comes directly from Proposition 4.2.
Assume now that $ \nd(L, \varphi) = d < n$. Let $ \{Y_{i} \}_{i=1}^{n} $ be $n$ general
sections of $ H^{0} (X, A) $. 
By the remark of Proposition 4.2, 
there exists a uniform constant $C>0$ such that for all $m, \epsilon$,
$$h^{0}(Y_{1}\cap\cdots\cap Y_{n-d}, m\epsilon A+mL\otimes
\mathcal{I}_{+}(m\varphi))=C(\epsilon,m)m^{d}. \leqno (4.1)$$
and $ C (\epsilon, m) \geq C$.
We prove by induction on $s$ that
$$\frac{1}{m^{n-s}}h^{0}(Y_{1}\cap\cdots\cap Y_{s}, m\epsilon A+mL\otimes \mathcal{I}_{+}(m\varphi))\leqno (4.2)$$
$$=C(\epsilon, m)\epsilon^{n-s-d}\frac{1}{(n-d-s)!}+O(\epsilon^{n-s-d+1})+O(\frac{1}{m})$$
for $ 0 \leq s \leq n-d $, 
where the constant $ C (\epsilon, m) $ is the same constant in $(4.1)$, in
particular it does not depend on the codimension $ s $.

If $ s = n-d$, $(4.2)$ comes from the definition of $ C (\epsilon, m) $.
We suppose that $(4.2)$ is true for $s_{0}\leq s\leq n-d $. 
We now prove that $(4.2)$ is also true for $s=s_{0}-1$.

Let $ Y $ be the intersection of zero divisors of $ s_ {0}-1 $
general elements of $H^{0}(X, A)$, and let
$$e_{1}^{0,q}(\epsilon, m)=\binom{m\epsilon}{q}h^{0}(Y\cap Y_{1}\cap\cdots \cap
Y_{q}, m\epsilon A\otimes mL\otimes\mathcal{I}_{+}(m\varphi)).\leqno (4.3)$$
We claim the formula:
$$\frac{1}{m^{n-s_{0}+1}}h^{0}(Y, m\epsilon A+mL\otimes
\mathcal{I}_{+}(m\varphi))\leqno
(4.4)$$
$$=-\frac{1}{m^{n-s_{0}+1}}(\sum\limits_{q\geq
1}(-1)^{q}e_{1}^{0,q}(\epsilon,m))+O(\frac{1}{m}).$$

We postpone the proof of $(4.4)$ in Lemma 4.5 and conclude first the proof of
the formula $(4.2)$.
First of all, if $q> n-d-s_{0}+1$, we have by definition 
$$\lim\limits_{m\rightarrow
\infty}\frac{1}{m^{n-s_{0}+1}}e_{1}^{0,q}(\epsilon)=O(\epsilon^{q})\leq
O(\epsilon^{n-d-s_{0}+2}).\leqno (4.5)$$
Thus $(4.4)$ and the induction hypothesis of $(4.2)$ implies that
$$\frac{1}{m^{n-s_{0}+1}}h^{0}(Y, m\epsilon A+mL\otimes
\mathcal{I}_{+}(m\varphi))$$
$$=-(\sum\limits_{q=1}^{n-d-s_{0}+1}
(-1)^{q}\frac{\epsilon^{n-d-s_{0}+1}C(\epsilon,m)}{q!(n-q-s_{0}+1-d)!}
)+O(\epsilon^{n-d-s_{0}+2})+O(\frac{1}{m})$$
$$=-(\sum\limits_{q=1}^{n-d-s_{0}+1}
(-1)^{q}\frac{\epsilon^{n-d-s_{0}+1}C(\epsilon,m)}{(n-s_{0}+1-d)!}\binom{n-s_{0}
+1-d}{q})+O(\epsilon^{n-d-s_{0}+2})+O(\frac{1}{m})$$
$$=-\frac{\epsilon^{n-d-s_{0}+1}C(\epsilon,m)}{(n-s_{0}+1-d)!}(\sum\limits_{q=1}^{n-d-s_{0}+1}
(-1)^{q}\binom{n-s_{0}+1-d}{q})+O(\epsilon^{n-d-s_{0}+2})+O(\frac{1}{m})$$
$$=C(\epsilon,
m)\epsilon^{n-d-s_{0}+1}\frac{1}{(n-d-s_{0}+1)!}+O(\epsilon^{n-d-s_{0}+2}
)+O(\frac{1}{m}).$$
Therefore $(4.2)$ is proved for $s=s_{0}-1$. 

In particular, since $(4.1)$ implies that 
$$\lim\limits_{\epsilon\to 0}\varlimsup\limits_{m\rightarrow \infty}\frac{1}{m
^{d}}h^{0}(Y_{1}\cap\cdots\cap Y_{n-d}, m\epsilon A+m L\otimes
\mathcal{I}_{+}(m\varphi))>0 ,$$
by taking $s=0$ in $(4.2)$, we obtain
$$\lim\limits_{\epsilon\to 0}\varlimsup\limits_{m\rightarrow
\infty}\frac{1}{m^{n}\epsilon^{n-d}}h^{0}(X, m\epsilon A+m L\otimes
\mathcal{I}_{+}(m\varphi))>0 .$$
The proposition is proved.
\end{proof}

We now prove the formula $(4.4)$ promised in the above proposition.
\begin{lemme}
We have
$$\frac{1}{m^{n-s_{0}+1}}h^{0}(Y, m\epsilon A+mL\otimes
\mathcal{I}_{+}(m\varphi))=\frac{1}{m^{n-s_{0}+1}}e^{0,0}_{1}(\epsilon,m)
$$
$$=-\frac{1}{m^{n-s_{0}+1}}(\sum\limits_{q\geq
1}(-1)^{q}e_{1}^{0,q}(\epsilon,m))+O(\frac{1}{m}).$$
\end{lemme}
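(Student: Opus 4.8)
The plan is to derive the identity from an exact Koszul--Mayer--Vietoris complex built out of $m\epsilon$ general members of $|A|$, to pass from Euler characteristics to $h^{0}$'s by Nadel vanishing, and to kill the error term by using that a cohomological intersection product vanishes on $Y$.

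First I would set $N=m\epsilon$ and choose general divisors $D_{1},\dots,D_{N}\in|A|$; write $\mathcal{L}_{m}=(m\epsilon A+mL)\otimes\mathcal{I}_{+}(m\varphi)$ and, for $I\subset\{1,\dots,N\}$, put $Y_{I}=Y\cap\bigcap_{i\in I}D_{i}$. For $D_{i}$ general the $Y_{I}$ are smooth of codimension $|I|$ in $Y$, empty once $|I|>\dim Y=n-s_{0}+1$, the higher sheaves $\operatorname{Tor}_{j}(\mathcal{O}_{Y_{I}},\mathcal{L}_{m})$ vanish for $j\geq1$, and (Lemma~4.1 together with the Remark after Proposition~4.2) the restriction identifies $\mathcal{I}_{+}(m\varphi)\otimes\mathcal{O}_{Y_{I}}$ with $\mathcal{I}_{+}(m\varphi|_{Y_{I}})$. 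The reduced divisor $D=D_{1}+\dots+D_{N}$ satisfies $\mathcal{O}_{Y}(D)\cong\mathcal{O}_{Y}(m\epsilon A)$, so tensoring the Mayer--Vietoris resolution of $\mathcal{O}_{D}$ by its components with $\mathcal{L}_{m}$ and splicing with $0\to\mathcal{L}_{m}(-D)\to\mathcal{L}_{m}\to\mathcal{L}_{m}|_{D}\to0$ gives an exact complex on $Y$
\[
0\to mL\otimes\mathcal{I}_{+}(m\varphi)\to\mathcal{L}_{m}\to\bigoplus_{|I|=1}\mathcal{L}_{m}|_{Y_{I}}\to\bigoplus_{|I|=2}\mathcal{L}_{m}|_{Y_{I}}\to\cdots\to\bigoplus_{|I|=\dim Y}\mathcal{L}_{m}|_{Y_{I}}\to0,
\]
since $\mathcal{L}_{m}(-D)\cong mL\otimes\mathcal{I}_{+}(m\varphi)$.

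Taking Euler characteristics of this complex, on each $Y_{I}$ (smooth projective of dimension $\dim Y-|I|$) the line bundle $m\epsilon A+mL-K_{Y_{I}}$ carries, for $m$ large and $\epsilon$ fixed, a metric with strictly positive curvature and singular part $(1+\epsilon')m\varphi$ for $\epsilon'$ small (the very ample term $m\epsilon A$ dominates both $K_{Y_{I}}$ and $\epsilon'm\,i\Theta_{h_{0}}(L)$, while $(1+\epsilon')m\,i\Theta_{\varphi}(L)\geq0$); hence Nadel vanishing gives $H^{i}(Y_{I},\mathcal{L}_{m}|_{Y_{I}})=0$ for $i\geq1$, so $\chi(Y_{I},\mathcal{L}_{m}|_{Y_{I}})=h^{0}(Y_{I},\mathcal{L}_{m}|_{Y_{I}})$, and likewise $\chi(Y,\mathcal{L}_{m})=h^{0}(Y,\mathcal{L}_{m})$. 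As the $\binom{N}{q}$ sets $Y_{I}$ with $|I|=q$ are isomorphic, collecting terms yields
\[
\sum_{q\geq0}(-1)^{q}e_{1}^{0,q}(\epsilon,m)=\chi\bigl(Y,mL\otimes\mathcal{I}_{+}(m\varphi)\bigr),
\]
which, since $e_{1}^{0,0}(\epsilon,m)=h^{0}(Y,m\epsilon A+mL\otimes\mathcal{I}_{+}(m\varphi))$, is exactly the asserted formula provided $\chi(Y,mL\otimes\mathcal{I}_{+}(m\varphi))=O(m^{n-s_{0}})$.

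This last estimate is the heart of the matter, and the step I expect to need the most care. Because $Y$ has codimension $s_{0}-1\leq n-d-1$ in $X$, one has $\dim Y=n-s_{0}+1\geq d+1>d=\nd(L,\varphi)$, so by the Remark after Proposition~4.2 the restricted pair has $\nd(Y,L,\varphi|_{Y})=\min(\nd(L,\varphi),\dim Y)=d<\dim Y$; equivalently $\langle(i\Theta_{\varphi|_{Y}})^{\dim Y}\rangle=0$ in $H^{\dim Y,\dim Y}(Y)$. The plan is then to show that the coefficient of $m^{\dim Y}$ in $\chi(Y,mL\otimes\mathcal{I}_{+}(m\varphi))$ is $\tfrac{1}{(\dim Y)!}\langle(i\Theta_{\varphi|_{Y}})^{\dim Y}\rangle$, hence $0$: passing to a quasi-equisingular approximation $\{\varphi_{k}|_{Y}\}$ with analytic singularities along divisors $E_{k}$, the Euler characteristic $\chi(Y,mL\otimes\mathcal{I}((1+\delta)m\varphi_{k}))=\chi(Y,mL-\lfloor(1+\delta)mE_{k}\rfloor)$ is a genuine polynomial in $m$ whose leading coefficient is the self-intersection number of the class $c_{1}(L|_{Y})-(1+\delta)[E_{k}]$, and letting $\delta\to0$ and $k\to\infty$ this tends to $\int_{Y}\bigl((i\Theta_{\varphi_{k}|_{Y}})_{\ac}\bigr)^{\dim Y}\to\langle(i\Theta_{\varphi|_{Y}})^{\dim Y}\rangle$; sandwiching $\mathcal{I}_{+}(m\varphi)$ between $\mathcal{I}((1+\delta)m\varphi_{k})$ and $\mathcal{I}(m\varphi_{k})$, whose Euler characteristics share this leading coefficient, pins the degree-$\dim Y$ coefficient of $\chi(Y,mL\otimes\mathcal{I}_{+}(m\varphi))$ to that common value $0$. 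Consequently $\chi(Y,mL\otimes\mathcal{I}_{+}(m\varphi))=O(m^{\dim Y-1})=O(m^{n-s_{0}})$, and the lemma, and with it the formula $(4.4)$, follows. The delicate part is making the sandwich argument quantitative enough to control the entire degree-$\dim Y$ part of the Euler characteristic — not merely the limiting coefficient — uniformly in $m$.
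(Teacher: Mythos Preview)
Your setup of the Mayer--Vietoris/Koszul resolution out of $m\epsilon$ general members of $|A|$, together with Nadel vanishing on each restricted piece $Y_I$ so that $\chi=h^0$ there, is exactly what the paper does (citing \cite{K}, Section~4, for the resolution). The divergence is entirely in how the remainder $\chi(Y,mL\otimes\mathcal{I}_+(m\varphi))$ is shown to be $o(m^{\dim Y})$.

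The paper's route here is much shorter and avoids the delicate point you flag at the end. It does not attempt to identify the top coefficient of $\chi$; it bounds each $h^{i}(Y,mL\otimes\mathcal{I}_+(m\varphi))$ separately. For $i=0$ one uses that $\nd(Y,L,\varphi|_Y)=d<\dim Y$, so the restricted pair is not big and $h^{0}/m^{\dim Y}\to0$; for $i\geq1$ the paper invokes a Nadel-type vanishing (after one ample twist, via the restriction sequence, this reduces to genuine Nadel on lower-dimensional strata). Summing gives $\sum_i(-1)^i h^i/m^{\dim Y}\to0$ directly.

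Your sandwich argument, by contrast, has a real gap that is not merely a matter of being ``quantitative enough''. Euler characteristics are not monotone under inclusions of coherent sheaves, so the chain $\mathcal{I}((1+\delta)m\varphi_k)\subset\mathcal{I}_+(m\varphi)\subset\mathcal{I}(m\varphi_k)$ does not squeeze $\chi(Y,mL\otimes\mathcal{I}_+(m\varphi))$ between the two outer Euler characteristics. Moreover, the outer sheaves depend on $k$ while the middle one does not: for each fixed $k$ the top coefficient of the outer $\chi$'s is $\int_Y\bigl((\tfrac{i}{2\pi}\Theta_{\varphi_k})_{\mathrm{ac}}\bigr)^{\dim Y}$, which is in general strictly positive and only tends to $0$ as $k\to\infty$, so no finite $k$ pins the middle coefficient. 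The only piece of your argument that survives is the $h^0$ part, where monotonicity is available --- and that reproduces exactly the paper's $(5.3)$. For the higher $h^i$'s one is forced back to a Nadel-type input, which is precisely the paper's $(5.2)$.
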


\begin{proof}
Thanks to $(iii),(iv)$ of Lemma 4.1 and the section 4 of \cite{K}, 
$\mathcal{O}_{Y}( mL\otimes \mathcal{I}_{+}(m\varphi))$ is resolved by
$$\mathcal{O}_{Y}(m\epsilon A+mL\otimes \mathcal{I}_{+}(m\varphi))\rightarrow
\oplus_{1\leq i\leq m\epsilon} \mathcal{O}_{Y\cap Y_{i}}(m\epsilon A+mL\otimes
\mathcal{I}_{+}(m\varphi))\leqno (*)$$
$$\rightarrow \oplus_{1\leq i_{1}<i_{2}\leq m\epsilon} \mathcal{O}_{Y\cap
Y_{i_{1}}\cap Y_{i_{2}}}(m\epsilon A+m L\otimes \mathcal{I}_{+}(m\varphi))$$
$$\rightarrow \cdots $$
Then
$$H^{k}(Y, mL\otimes \mathcal{I}_{+}(m\varphi))=\mathbb{H}^{k}(\epsilon,
m)\leqno (5.1)$$
where $\mathbb{H}^{k}(\epsilon,
m)$ represents the hypercohomology of $(*)$. 

We now calculate the asymptotic behaviour of on the both sides of $(5.1)$.
The Nadel vanishing theorem implies that
$$\lim\limits_{m\rightarrow \infty}\frac{1}{m^{n-s_{0}+1}}h^{k}(Y,
mL\otimes\mathcal{I}_{+}(m\varphi))=0\qquad\text{for any}\hspace{5 pt}k\geq 1.\leqno (5.2)$$ 
Moreover, since we assume that $ \nd (L, h) = d <  \dim Y$, 
we have
$$\lim\limits_{m\rightarrow \infty}\frac{1}{m^{n-s_{0}+1}} h^{0}(Y,
mL\otimes\mathcal{I}_{+}(m\varphi))=0 .\leqno (5.3)$$
By calculating the asymptotic cohomology on both sides of $(5.1)$,
the equations $(5.2)$ and $(5.3)$ imply in particular that
$$\lim\limits_{m\rightarrow
\infty}\frac{1}{m^{n-s_{0}+1}}\sum_{k}(-1)^{k}h^{k}(\epsilon, m)=0 ,\leqno
(5.4)$$
where $h^{k}(\epsilon, m)$ denotes the dimension of $\mathbb{H}^{k}(\epsilon,
m)$.
 
For the right side of $(5.1)$, we have
$$\lim\limits_{m\rightarrow
\infty}\frac{1}{m^{n-s_{0}+1}}\binom{m\epsilon}{q}h^{p}(Y\cap Y_{1}\cap\cdots
\cap
Y_{q}, m\epsilon A\otimes mL\otimes\mathcal{I}_{+}(m\varphi))=0$$
for every $p\geq 1$ by Nadel vanishing theorem.
If $ p = 0$, then
$$\binom{m\epsilon}{q}h^{0}(Y\cap Y_{1}\cap\cdots \cap
Y_{q}, m\epsilon A\otimes
mL\otimes\mathcal{I}_{+}(m\varphi))=e_{1}^{0,q}(\epsilon, m)$$
by definition.
Thus $(5.4)$ implies that 
$$\lim\limits_{m\rightarrow \infty}\frac{1}{m^{n-s_{0}+1}}(\sum\limits_{q\geq
0}(-1)^{q}e_{1}^{0,q}(\epsilon,m))=0\qquad\text{for any}\hspace{5 pt}\epsilon>0 .$$
Then
$$\frac{1}{m^{n-s_{0}+1}}h^{0}(Y, m\epsilon A+mL\otimes
\mathcal{I}_{+}(m\varphi))=\frac{1}{m^{n-s_{0}+1}}e^{0,0}_{1}(\epsilon,
m)$$
$$=-\frac{1}{m^{n-s_{0}+1}}(\sum\limits_{q\geq
1}(-1)^{q}e_{1}^{0,q}(\epsilon,m))+O(\frac{1}{m}).$$
\end{proof}

\begin{remark}
Let $L$ be a pseudo-effective line bundle on
a compact Kähler manifold. 
S.Boucksom defined in \cite{Bou} a concept of numerical dimension $\nd(L)$ of pseudo-effective
line bundles without metrics. 
Let $\varphi_{\min}$ be a positive metric
of $L$ with minimal singularity. 
Proposition 4.4 implies in particular that
$$\nd(L)\geq \nd(L,\varphi_{\min}).$$
The example 1.7 in \cite{DPS 94} tells us that we cannot hope the equality 
$$\nd(L)= \nd(L,\varphi_{\min}).$$
In that example, the line bundle $L$ is nef and $\nd(L)=1$.
But there exists a unique positive singular metric $h$ on $L$ with the curvature form
$$\frac{i}{2\pi}\Theta_{h} (L)=[C] ,$$ 
where $C$ is a curve on $X$. 
Therefore $\varphi_{\min}=h$. 
Moreover, since $h$ has analytic singularities, we have 
$\nd(L,\varphi_{\min})=0$ by definition.
Thus
$$\nd(L)>\nd(L,\varphi_{\min}).$$
\end{remark}

\begin{remark}
The example 3.6 in \cite{Tsu} tells us that we
cannot hope the following equality:
$$\sup_A\varlimsup\limits_{m\rightarrow \infty} \frac{\ln h^{0}(X, A+mL\otimes
\mathcal{I}(m\varphi))}{\ln m}=\nd (L,\varphi) ,$$
where $A$ runs over all the amples bundles on $X$.
In fact, H.Tsuji defined a closed positive $(1,1)$-current $T$ on
$\mathbb{P}^{1}$:
$$T=\sum_{i=1}^{+\infty}\sum_{j=1}^{3^{i-1}} \frac{1}{4^{i}} P_{i,j}$$
where $\{P_{i,j}\}$ are distinct points on $\mathbb{P}^{1}$.
There exists thus a singular metric $\varphi$ on $L=\mathcal{O}(1)$ with
$\frac{i}{2\pi}\Theta_{\varphi}(L)=T$.
It is easy to construct a quasi-equisingular approximation $\{\varphi_{k}\}$ to $\varphi$ such that 
$$\frac{i}{2\pi}\Theta_{\varphi_{k}}(L)=\sum_{i=1}^{k}\sum_{j=1}^{3^{i-1}} \frac{1}{4^{i}}
P_{i,j}+C^{\infty}.$$
Then $\nd (L,\varphi)=0$.

On the other hand, thanks to the construction of $\varphi$, we have
$$\varlimsup\limits_{m\rightarrow \infty} \frac{h^{0}(\mathbb{P}^{1},
\mathcal{O}(s+m)\otimes
\mathcal{I}(m\varphi))}{m}=\varlimsup\limits_{k\rightarrow \infty}
\frac{h^{0}(\mathbb{P}^{1}, \mathcal{O}(s+4^{k}-1)\otimes
\mathcal{I}((4^{k}-1)\varphi))}{4^{k}-1}$$
for every $s\geq 1$.
By construction, 
$$\mathcal{I}((4^{k}-1)\varphi)_{x}=\mathcal{O}_{x}$$
for $x\notin
\{P_{i,j}\}_{i\leq k-1}$, 
and $\mathcal{I}((4^{k}-1)\varphi)$ has multiplicity $\lfloor\frac{4^{k}-1}{4^{i}}\rfloor=4^{k-i}-1$ 
on $3^{i-1}$ points $\{P_{i,1},..., P_{i,3^{i-1}}\}$.
Therefore
$$h^{0}(\mathbb{P}^{1}, \mathcal{O}(s+4^{k}-1)\otimes
\mathcal{I}((4^{k}-1)\varphi))=s+4^{k}-\sum_{i=1}^{k-1}3^{i-1}(4^{k-i}-1)$$
$$=\frac{9}{2}3^{k-1}+s-\frac{1}{2}.$$
Then
$$\sup_A\varlimsup\limits_{m\rightarrow \infty} \frac{\ln h^{0}(\mathbb{P}^{1},
A+\mathcal{O}(m)\otimes \mathcal{I}(m\varphi))}{\ln m}=\frac{\ln 3}{\ln 4} .$$
Therefore
$$\nd(L,\varphi)\neq \sup_A\varlimsup\limits_{m\rightarrow \infty} \frac{\ln
h^{0}(\mathbb{P}^{1},
A+\mathcal{O}(m)\otimes \mathcal{I}(m\varphi))}{\ln m}.$$
\end{remark}

\section { A Kawamata-Viehweg-Nadel Vanishing Theorem}


In this section, 
we will prove that given a pseudo-effective
line bundle $(L,\varphi)$ on a compact Kähler manifold $X$ of dimension $n$,
then
$$H^{p}(X, K_{X}\otimes L\otimes \mathcal{I}_{+}(\varphi))=0\qquad\text{for}\hspace{5 pt}p\geq n-\nd (L,\varphi)+1 .$$
The main advantage of this version of the Kawamata-Viehweg-Nadel vanishing theorem is that 
we do not need any strict positivity of the line bundle. 
In our case, 
we just suppose that $X$ is a compact Kähler manifold and $\varphi$ is a
quasi-psh function such that $\frac{i}{2\pi}\Theta_{\varphi}(L)\geq 0$. 
But as a compensation, we need the multiplier ideal sheaf $\mathcal{I}_{+}(\varphi)$ instead of $\mathcal{I}(\varphi)$.

When $X$ is projective, the proof of our vanishing theorem is much easier. 
We first give a quick proof of this vanishing theorem in the projective case by the tools developed in the previous sections.
First of all, we prove the vanishing theorem in the case $\nd (L,\varphi)=\dim X$.
\begin{prop}
Let $ (L, \varphi) $ a pseudo-effective line bundle on a smooth projective
variety of dimension $ n $ with $ \nd (L, \varphi) = n $. 
Then
$$H^{i}(X, K_{X}+L\otimes\mathcal{I}_{+}(\varphi))=0\qquad\text{for any } i>0.$$
\end{prop}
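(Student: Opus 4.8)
The plan is to reduce the statement to the classical Nadel vanishing theorem by exploiting the hypothesis $\nd(L,\varphi)=n$ to upgrade $\varphi$ to a genuinely positive metric with essentially the same multiplier ideal sheaf. Concretely, I would apply Proposition 3.5: there is a quasi-psh function $\widetilde{\varphi}$ with $\frac{i}{2\pi}\Theta_{\widetilde{\varphi}}(L)\geq \frac{\delta}{2}\omega>0$ and $\varphi_m\preccurlyeq\widetilde{\varphi}$ for all $m$, where $\{\varphi_m\}$ is the quasi-equisingular approximation of Proposition 3.2. The curvature of $\widetilde{\varphi}$ is strictly positive, so Nadel's vanishing theorem gives $H^i(X,K_X+L\otimes\mathcal{I}(\widetilde{\varphi}))=0$ for $i>0$, and more generally $H^i(X,K_X+L\otimes\mathcal{I}(t\widetilde{\varphi}))=0$ for any $t\geq 1$ and $i>0$. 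The point is then to compare $\mathcal{I}(t\widetilde{\varphi})$ with $\mathcal{I}_+(\varphi)$ for a suitable $t$.

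The key step is the sandwiching of ideal sheaves. On the one hand, since $\varphi_m\preccurlyeq\widetilde{\varphi}$ we have $\mathcal{I}(t\widetilde{\varphi})\subset\mathcal{I}(t\varphi_m)$ for every $m$ and $t$. On the other hand, $\{\varphi_m\}$ being a quasi-equisingular approximation, property $(iii)$ of Definition 2 gives, for fixed $\delta>0$, that $\mathcal{I}((1+\delta)\varphi_m)\subset\mathcal{I}(\varphi)$ for $m\geq m_0(\delta)$; replacing $\varphi$ by $(1+\delta)\varphi$ in this scheme (or running the approximation for $(1+\delta)\varphi$) yields $\mathcal{I}((1+\delta)^2\varphi_m)\subset\mathcal{I}((1+\delta)\varphi)$. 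Choosing $\delta$ small enough that $\mathcal{I}((1+\delta)\varphi)=\mathcal{I}_+(\varphi)$ (possible by the Noetherian property recalled in Section 2), and taking $t=(1+\delta)^2$, $m$ large, we obtain
$$\mathcal{I}(t\widetilde{\varphi})\subset\mathcal{I}(t\varphi_m)\subset\mathcal{I}((1+\delta)\varphi)=\mathcal{I}_+(\varphi).$$
For the reverse direction, since $\widetilde{\varphi}$ is built by regularizing the $\varphi_m$ from below (Proposition 3.5) and $\varphi_m$ is less singular than $\varphi$, one checks $\mathcal{I}_+(\varphi)\subset\mathcal{I}(\varphi_m)$, and by taking $\widetilde{\varphi}$ only slightly more singular one arranges $\mathcal{I}_+(\varphi)\subset\mathcal{I}(t\widetilde{\varphi})$; alternatively, one observes directly from the construction in Proposition 3.5 that $\widetilde{\varphi}$ and $\varphi$ have comparable singularities up to the $(1+\delta)$ factors, so that $\mathcal{I}(t\widetilde{\varphi})=\mathcal{I}_+(\varphi)$ for the chosen $t$.

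With the identification $\mathcal{I}(t\widetilde{\varphi})=\mathcal{I}_+(\varphi)$ in hand, Nadel vanishing for the strictly positive metric $t\widetilde{\varphi}$ on $L$ gives
$$H^i(X,K_X+L\otimes\mathcal{I}_+(\varphi))=H^i(X,K_X+L\otimes\mathcal{I}(t\widetilde{\varphi}))=0\qquad\text{for }i>0,$$
which is the assertion. The main obstacle I anticipate is precisely the bookkeeping in the sandwich: one must juggle the perturbation parameter $\delta$ (from $\mathcal{I}_+$), the index $m$ of the approximation, and the multiplier $t$ on $\widetilde{\varphi}$, making sure that the chain of inclusions closes up to a genuine equality $\mathcal{I}(t\widetilde{\varphi})=\mathcal{I}_+(\varphi)$ rather than a one-sided containment. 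A clean way to handle this is to fix $\delta$ with $\mathcal{I}((1+\delta)\varphi)=\mathcal{I}_+(\varphi)$ first, then run Proposition 3.2 and Proposition 3.5 for the pair $(L,(1+\delta)\varphi)$ — whose numerical dimension is still $n$ since scaling the metric by a constant near $1$ does not change $\langle(i\Theta)^n\rangle\neq 0$ — and absorb all the perturbations into that single choice of $\delta$.
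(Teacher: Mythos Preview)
Your overall strategy---use Proposition 3.5 to produce a metric $\widetilde\varphi$ on $L$ with strictly positive curvature and then invoke Nadel---is the same as the paper's. The genuine gap is the reverse inclusion $\mathcal{I}_+(\varphi)\subset\mathcal{I}(t\widetilde\varphi)$, which you assert but do not prove.

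Proposition 3.5 only tells you $\varphi_m\preccurlyeq\widetilde\varphi$, i.e.\ $\widetilde\varphi$ is \emph{more} singular than each $\varphi_m$. There is no bound from the other side: nothing in the construction (Propositions 3.3 and 3.5) prevents $\widetilde\varphi$ from acquiring deep poles at points where $\varphi$ and all the $\varphi_m$ are smooth---the auxiliary functions $\psi_m$ added in Proposition 3.3 are completely uncontrolled from above. In such a situation $\mathcal{I}(t\widetilde\varphi)_x\subsetneq\mathcal{O}_{X,x}=\mathcal{I}_+(\varphi)_x$ and the equality $\mathcal{I}(t\widetilde\varphi)=\mathcal{I}_+(\varphi)$ fails. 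The phrases ``one checks'' and ``$\widetilde\varphi$ and $\varphi$ have comparable singularities up to $(1+\delta)$ factors'' are unsupported, and rerunning the construction for $(L,(1+\delta)\varphi)$ inherits the same defect: the resulting $\widetilde\varphi$ is again only controlled from below.

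The paper's fix is to use not $t\widetilde\varphi$ alone but the weighted combination
\[
(1+\sigma(\epsilon)-\epsilon)\,\varphi+\epsilon\,\widetilde\varphi,\qquad 0<\sigma(\epsilon)\ll\epsilon\ll 1.
\]
The tiny coefficient $\epsilon$ on $\widetilde\varphi$ is still enough to make the total curvature strictly positive (since $\frac{i}{2\pi}\Theta_{\widetilde\varphi}(L)\ge\frac{\delta}{2}\omega$ dominates the error $\sigma(\epsilon)\,dd^c\varphi\ge -c\,\sigma(\epsilon)\omega$), so Nadel applies. On the ideal-sheaf side the smallness of $\epsilon$ is exactly what rescues the reverse inclusion: $\widetilde\varphi$ being quasi-psh, $e^{-2a\widetilde\varphi}$ is locally integrable for some fixed $a>0$; choosing $\epsilon$ small relative to $a$ and applying H\"older's inequality shows that any $f\in\mathcal{I}_+(\varphi)=\mathcal{I}((1+\delta)\varphi)$ also lies in $\mathcal{I}\bigl((1+\sigma-\epsilon)\varphi+\epsilon\widetilde\varphi\bigr)$ (this is Lemma 5.2). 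The forward inclusion goes through via $\varphi_m\preccurlyeq\widetilde\varphi$ and Lemma 3.1, along the lines you indicated. So the missing idea is precisely the dilution of $\widetilde\varphi$ by a small parameter, which neutralises its uncontrolled extra singularities while retaining strict positivity.
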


\begin{proof}

Fix a smooth metric $h_{0}$ on $L$. $(L,\varphi)$ is pseudo-effective means that
$$\frac{i}{2\pi}\Theta_{\varphi}(L)=\frac{i}{2\pi}\Theta_{h_{0}}(L)+dd^{c}\varphi\geq 0 .$$
Since $ \frac{i}{2\pi} \Theta_{\varphi}(L) $ is not strictly positive, 
we need to add a portion of the metric $ \widetilde{\varphi} $ constructed in
Proposition 3.5
to make the new curvature become strictly positive.
We will see that this operation preserves $ \mathcal{I}_{+}(\varphi) $.

First of all, by the definition of $\mathcal{I}_{+}$, there is a $ \delta> 0 $
such that
$$\mathcal{I}_{+}(\varphi)=\mathcal{I}((1+\delta)\varphi) .$$
Let $ \widetilde{\varphi} $ be the psh function constructed in Propositon 3.5. 
For any $\epsilon> 0 $, we define a new metric
$$h_{0}e^{-(1+\sigma(\epsilon)-\epsilon)\varphi-\epsilon\widetilde{\varphi}}$$
on $L$
where $0 < \sigma(\epsilon)\ll \epsilon$.
Since $dd^{c}\varphi\geq -c\omega$ for some constant $c$,
\footnote{In our context, $\varphi$ is a function on $X$, then
$\frac{i}{2\pi}\Theta_{\varphi}(L)=\frac{i}{2\pi}\Theta_{h_{0}}(L)+dd^{c}\varphi\geq 0$. Therefore
$dd^{c}\varphi\geq -c\omega$.} the condition
$\sigma(\epsilon) \ll\epsilon$ implies that
$$\frac{i}{2\pi}\Theta_{(1+\sigma(\epsilon)-\epsilon)\varphi+\epsilon\widetilde{\varphi}}
(L)=(1+\sigma(\epsilon)-\epsilon)\frac{i}{2\pi}\Theta_{\varphi}(L)+\epsilon
\frac{i}{2\pi}\Theta_{\widetilde{\varphi}}(L)+\sigma(\epsilon)dd^{c}\varphi>0.$$
Applying the standard Nadel vanishing theorem on
$$(X,L, \mathcal{I}((1+\sigma(\epsilon)-\epsilon)\varphi+\epsilon\widetilde{\varphi}))$$
we get
$$H^{i}(X,
K_{X}+L\otimes\mathcal{I}
((1+\sigma(\epsilon)-\epsilon)\varphi+\epsilon\widetilde{\varphi}))=0\qquad\text{for}\hspace{5 pt} i>0.\leqno
(1.1)$$
On the other hand, it it not hard to prove that
$$\mathcal{I}_{+}(\varphi)=\mathcal{I}((1+\sigma(\epsilon)-\epsilon)\varphi+\epsilon\widetilde{\varphi})
\qquad\text{for }\epsilon\ll 1.\leqno (1.2)$$
We postpone the proof of $(1.2)$ in Lemma 5.2 and conclude first the proof of Proposition 5.1.
Taking $\epsilon$ small enough, $(1.1)$ and $(1.2)$ imply the proposition.
\end{proof}

\begin{lemme}
If $ \epsilon $ is small enough, then
$$\mathcal{I}_{+}(\varphi)=\mathcal{I}((1+\sigma(\epsilon)-\epsilon)\varphi+\epsilon\widetilde{\varphi}).$$
\end{lemme}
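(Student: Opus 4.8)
The plan is to establish the two inclusions $\mathcal{I}_{+}(\varphi)\subset\mathcal{I}(\psi_{\epsilon})$ and $\mathcal{I}(\psi_{\epsilon})\subset\mathcal{I}_{+}(\varphi)$, where I abbreviate $\psi_{\epsilon}=(1+\sigma(\epsilon)-\epsilon)\varphi+\epsilon\widetilde{\varphi}$. I would first normalise $\sup_{X}\varphi=\sup_{X}\widetilde{\varphi}=0$, fix once and for all a $\delta>0$ with $\mathcal{I}_{+}(\varphi)=\mathcal{I}((1+\delta')\varphi)$ for all $0<\delta'\le\delta$, and record that $\widetilde{\varphi}$, being quasi-psh on the compact $X$, has finite Lelong numbers, so by Skoda's integrability theorem there is a $t_{0}>0$ with $\mathcal{I}(t\widetilde{\varphi})=\mathcal{O}_{X}$ for $0<t<t_{0}$.

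For $\mathcal{I}_{+}(\varphi)\subset\mathcal{I}(\psi_{\epsilon})$ I would use Hölder's inequality. Given a germ $f\in\mathcal{I}_{+}(\varphi)_{x}=\mathcal{I}((1+\delta)\varphi)_{x}$, I split $e^{-2\psi_{\epsilon}}=e^{-2(1+\sigma(\epsilon)-\epsilon)\varphi}\cdot e^{-2\epsilon\widetilde{\varphi}}$ and apply Hölder with conjugate exponents $(p,p')$. Since $1+\sigma(\epsilon)-\epsilon<1+\delta$, one can choose $p>1$ close to $1$ with $p(1+\sigma(\epsilon)-\epsilon)\le 1+\delta$ and, once $\epsilon$ is small, simultaneously with $p'\epsilon<t_{0}$. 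Then $\int_{U_{x}}|f|^{2}e^{-2\psi_{\epsilon}}\le\bigl(\int_{U_{x}}|f|^{2}e^{-2p(1+\sigma(\epsilon)-\epsilon)\varphi}\bigr)^{1/p}\bigl(\int_{U_{x}}|f|^{2}e^{-2p'\epsilon\widetilde{\varphi}}\bigr)^{1/p'}<+\infty$: the first factor is finite because $f\in\mathcal{I}((1+\delta)\varphi)_{x}$, and the second because $e^{-2p'\epsilon\widetilde{\varphi}}\in L^{1}_{\mathrm{loc}}$ by the choice of $p'\epsilon$.

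For the reverse inclusion $\mathcal{I}(\psi_{\epsilon})\subset\mathcal{I}_{+}(\varphi)$ one has to see that the correction term $\epsilon\widetilde{\varphi}$, needed to make the curvature of $\psi_{\epsilon}$ strictly positive, does not enlarge the multiplier ideal, i.e. that $\psi_{\epsilon}$ is as singular as $(1+\sigma')\varphi$ for some $0<\sigma'\le\delta$. The crude bound $\widetilde{\varphi}\le0$ only gives $\mathcal{I}(\psi_{\epsilon})\subset\mathcal{I}((1+\sigma(\epsilon)-\epsilon)\varphi)$, whose exponent is $<1$, so the singularity carried by $\epsilon\widetilde{\varphi}$ must be transferred back onto $\varphi$. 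This is where the construction of $\widetilde{\varphi}$ is used: by Proposition 3.5 one has $\varphi_{m}\preccurlyeq\widetilde{\varphi}$ for every $m$, where $\{\varphi_{m}\}$ is the quasi-equisingular approximation of Proposition 3.2 for $\frac{i}{2\pi}\Theta_{h_{0}}(L)+dd^{c}\varphi$; feeding $\varphi_{m}\preccurlyeq\widetilde{\varphi}$ into Lemma 3.1 (applied to the Bergman metrics $\varphi_{m}$) yields $\mathcal{I}\bigl(m(1+\delta)\widetilde{\varphi}\bigr)\subset\mathcal{I}(m\varphi)$ for all $m\in\mathbb{N}$, and this, together with the Lelong--number comparison $\nu(\widetilde{\varphi},x)\ge\nu(\varphi,x)$ (which follows from $\nu(\varphi_{m},x)\uparrow\nu(\varphi,x)$), is what forces, for $\epsilon$ small, $\mathcal{I}(\psi_{\epsilon})\subset\mathcal{I}((1+\sigma')\varphi)=\mathcal{I}_{+}(\varphi)$ with $\sigma'\le\delta$.

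The hard part is precisely this second inclusion. The metrics $\widetilde{\varphi}$ and $\varphi$ are not comparable pointwise --- at a point where $\frac{i}{2\pi}\Theta_{\varphi}(L)$ vanishes $\widetilde{\varphi}$ may be far more singular than $\varphi$, while elsewhere $\varphi$ may be far more singular than $\widetilde{\varphi}$ --- so one cannot dominate $\psi_{\epsilon}$ by a multiple of $\varphi$ by a naive estimate. The content of the lemma is that the quasi-equisingularity of $\{\varphi_{m}\}$, through $\varphi_{m}\preccurlyeq\widetilde{\varphi}$ and Lemma 3.1, keeps the multiplier ideal of the perturbed, strictly positively curved metric $\psi_{\epsilon}$ equal to that of $\varphi$ at the stable level $1+\delta'$; everything else (the Hölder estimate, the Skoda input) is routine.
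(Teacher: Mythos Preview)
Your treatment of the inclusion $\mathcal{I}_{+}(\varphi)\subset\mathcal{I}(\psi_{\epsilon})$ via H\"older is essentially the paper's argument.

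The gap is in the reverse inclusion $\mathcal{I}(\psi_{\epsilon})\subset\mathcal{I}_{+}(\varphi)$. You assemble the facts $\mathcal{I}(m(1+\delta)\widetilde{\varphi})\subset\mathcal{I}(m\varphi)$ and $\nu(\widetilde{\varphi},x)\ge\nu(\varphi,x)$, and then assert that they ``force'' $\mathcal{I}(\psi_{\epsilon})\subset\mathcal{I}_{+}(\varphi)$. But neither of these gives control on the multiplier ideal of the \emph{convex combination} $\psi_{\epsilon}=(1+\sigma(\epsilon)-\epsilon)\varphi+\epsilon\widetilde{\varphi}$: asymptotic equality of multiplier ideals of $\widetilde{\varphi}$ and $\varphi$ does not propagate to mixtures, and Lelong numbers are too coarse to determine multiplier ideals. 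You have correctly diagnosed that one cannot compare $\psi_{\epsilon}$ with a multiple of $\varphi$ directly, but you have not supplied the bridge.

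The paper's bridge is a one-line observation you are missing: one uses \emph{both} $\varphi_{m}\preccurlyeq\varphi$ (the Bergman approximation is less singular than $\varphi$) and $\varphi_{m}\preccurlyeq\widetilde{\varphi}$ (Proposition~3.5). These together give
\[
(1+\sigma(\epsilon))\varphi_{m}=(1+\sigma(\epsilon)-\epsilon)\varphi_{m}+\epsilon\varphi_{m}\;\preccurlyeq\;(1+\sigma(\epsilon)-\epsilon)\varphi+\epsilon\widetilde{\varphi}=\psi_{\epsilon},
\]
hence $\mathcal{I}(\psi_{\epsilon})\subset\mathcal{I}\bigl((1+\sigma(\epsilon))\varphi_{m}\bigr)$. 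Now Lemma~3.1 (the quasi-equisingular property) gives $\mathcal{I}\bigl((1+\sigma(\epsilon))\varphi_{m}\bigr)\subset\mathcal{I}_{+}(\varphi)$ for $m$ large enough relative to $\sigma(\epsilon)$, and the inclusion follows. The point is that $\varphi_{m}$, being simultaneously less singular than $\varphi$ and than $\widetilde{\varphi}$, is automatically less singular than any convex combination of the two; this is what lets you trade the uncomparable pair $(\varphi,\widetilde{\varphi})$ for the single function $\varphi_{m}$, to which Lemma~3.1 applies directly.
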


\begin{proof}
By Proposition 3.5, we have
$ \varphi_{m}\preccurlyeq \widetilde {\varphi} $. 
Therefore
$$(1+\sigma(\epsilon)-\epsilon)\varphi_{m}+\epsilon\varphi_{m}
\preccurlyeq (1+\sigma(\epsilon)-\epsilon)\varphi+\epsilon\widetilde{\varphi},$$
which implies that
$$\mathcal{I}((1+\sigma(\epsilon)-\epsilon)\varphi+\epsilon\widetilde{\varphi}
)\subset \mathcal{I}((1+\sigma(\epsilon))\varphi_{m}).\leqno(2.1)$$
Notice moreover that Lemma 3.1 implies that
$$\mathcal{I}((1+\sigma(\epsilon))\varphi_{m})\subset
\mathcal{I}_{+}(\varphi)\leqno(2.2)$$
for $ m $ large enough with respect to $ \sigma (\epsilon) $.
Combining $(2.1)$ and $(2.2)$, we obtain
$$\mathcal{I}((1+\sigma(\epsilon)-\epsilon)\varphi+\epsilon\widetilde{\varphi}
)\subset\mathcal{I}_{+}(\varphi).$$

For the other side inclusion, we take $ f \in \mathcal{I}_{+}(\varphi)_{x}$,
i.e., $f$ satisfies
$$\int_{U_{x}} \mid f\mid^{2}e^{-2(1+\delta)\varphi}< +\infty .\leqno (2.3)$$
We need to prove that
$$f\in \mathcal{I}((1+\sigma(\epsilon)-\epsilon)\varphi+\epsilon\widetilde{\varphi})_{x}.$$
Since $\widetilde{\varphi}$ is a quasi-psh function, by taking $ \epsilon$ small
enough, we have
$$\int_{U_{x}}e^{-2\frac{\epsilon}{\delta}\widetilde{\varphi}}< +\infty.\leqno
(2.4)$$
Therefore $(2.3)$ and $(2.4)$ imply that
$$\int_{U_{x}} \mid
f\mid^{2}e^{-2(1+\sigma(\epsilon)-\epsilon)\varphi-2\epsilon\widetilde{\varphi}}$$
$$\leq \int_{U_{x}} \mid f\mid^{2}e^{-2(1+\delta)\varphi}\int_{U_{x}}
e^{-2\frac{\epsilon}{\delta}\widetilde{\varphi}}
< +\infty$$
by H\"{o}lder's inequality.
Then $f\in \mathcal{I}((1+\sigma(\epsilon)-\epsilon)\varphi+\epsilon\widetilde{\varphi})$,
and we obtain
$$\mathcal{I}_{+}(\varphi)\subset
\mathcal{I}((1+\sigma(\epsilon)-\epsilon)\varphi+\epsilon\widetilde{\varphi})\qquad\text{for }\epsilon\ll 1.$$
The lemma is proved.

\end{proof}

Using Proposition 5.1, we now prove a Kawamata-Viehweg-Nadel type vanishing theorem by induction on dimension. 

\begin{prop}
Let $ (L, \varphi) $ be a pseudo-effective line bundle on a projective
variety $X$ of dimension $ n $. 
Then
$$H^{p}(X, K_{X}+L\otimes\mathcal{I}_{+}(\varphi))=0
\qquad\text{for}\hspace{5 pt} p\geq n-\nd(L,\varphi)+1.$$
\end{prop}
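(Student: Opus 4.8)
The plan is to prove the statement by induction on $n=\dim X$, using Proposition 5.1 as the engine. When $\nd(L,\varphi)=n$ the assertion is precisely Proposition 5.1, and the case $n=0$ is trivial, so I may assume $d:=\nd(L,\varphi)<n$, hence $n-d+1\geq 2$. Fix, as always, a smooth metric $h_{0}$ on $L$ with $\frac{i}{2\pi}\Theta_{h_{0}}(L)+dd^{c}\varphi\geq 0$, choose a very ample line bundle $A$ with a fixed smooth positive metric $h_{A}$, and let $S\in|A|$ be a general member. For $S$ general, Bertini makes $S$ a smooth projective variety of dimension $n-1$, the restriction $\varphi|_{S}$ is a genuine (not $\equiv-\infty$) quasi-psh function on $S$, the conclusions of Lemma 4.1 apply (so all the relevant restriction maps of multiplier ideals are well defined and the $\adj$ sheaf computations hold), and --- exactly as in the proof of Proposition 4.2 --- one has $\nd(L|_{S},\varphi|_{S})\geq d$. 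Moreover $(L|_{S},\varphi|_{S})$ is again pseudo-effective, since the restriction of $\frac{i}{2\pi}\Theta_{\varphi}(L)\geq 0$ to $S$ is nonnegative.

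Next I would set up the dévissage exact sequence. Combining parts $(iii)$ and $(iv)$ of Lemma 4.1 gives the short exact sequence
$$0 \longrightarrow \mathcal{I}_{+}(\varphi)\otimes\mathcal{O}_{X}(-S) \longrightarrow \mathcal{I}_{+}(\varphi) \longrightarrow \mathcal{I}_{+}(S,\varphi|_{S}) \longrightarrow 0,$$
with the last sheaf supported on $S$. Tensoring by $\mathcal{O}_{X}(K_{X}+L+S)$ and using the adjunction isomorphism $(K_{X}+S)|_{S}\cong K_{S}$, this becomes
$$0 \to \mathcal{O}_{X}(K_{X}+L)\otimes\mathcal{I}_{+}(\varphi) \to \mathcal{O}_{X}(K_{X}+L+S)\otimes\mathcal{I}_{+}(\varphi) \to \mathcal{O}_{S}(K_{S}+L|_{S})\otimes\mathcal{I}_{+}(S,\varphi|_{S}) \to 0.$$
The associated long exact sequence then furnishes, for each $p$, an exact piece
$$H^{p-1}\!\bigl(S,\,K_{S}+L|_{S}\otimes\mathcal{I}_{+}(S,\varphi|_{S})\bigr) \longrightarrow H^{p}\!\bigl(X,\,K_{X}+L\otimes\mathcal{I}_{+}(\varphi)\bigr) \longrightarrow H^{p}\!\bigl(X,\,K_{X}+(L+A)\otimes\mathcal{I}_{+}(\varphi)\bigr).$$

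Then I would kill the two outer groups for $p\geq n-d+1$. Since $\mathcal{I}_{+}(\varphi)$ depends only on $\varphi$ and not on the bundle, and the curvature of $(L+A,\varphi)$ is $\frac{i}{2\pi}\Theta_{\varphi}(L)+\frac{i}{2\pi}\Theta_{h_{A}}(A)>0$, we have $\nd(L+A,\varphi)=n$, so Proposition 5.1 gives $H^{p}(X,\,K_{X}+(L+A)\otimes\mathcal{I}_{+}(\varphi))=0$ for all $p>0$, in particular for $p\geq n-d+1\geq 2$. For the left-hand term, the induction hypothesis on $S$ (of dimension $n-1<n$) together with $\nd(L|_{S},\varphi|_{S})\geq d$ gives $H^{q}(S,\,K_{S}+L|_{S}\otimes\mathcal{I}_{+}(S,\varphi|_{S}))=0$ for $q\geq(n-1)-\nd(L|_{S},\varphi|_{S})+1$, and since this bound is $\leq n-d$, the group vanishes for $q=p-1$ as soon as $p\geq n-d+1$. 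Squeezed between two zeros, $H^{p}(X,\,K_{X}+L\otimes\mathcal{I}_{+}(\varphi))$ vanishes for $p\geq n-d+1=n-\nd(L,\varphi)+1$, which closes the induction.

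The only steps requiring genuine care are already carried out in Section 4: that a general $S$ simultaneously satisfies all the genericity hypotheses above --- in particular the applicability of Lemma 4.1 and the inequality $\nd(L|_{S},\varphi|_{S})\geq d$ --- and that the quotient $\mathcal{I}_{+}(\varphi)\big/\bigl(\mathcal{I}_{+}(\varphi)\otimes\mathcal{O}_{X}(-S)\bigr)$ is exactly $\mathcal{I}_{+}(S,\varphi|_{S})$, which is the Ohsawa--Takegoshi-based content of Lemma 4.1 $(iii)$--$(iv)$. The conceptual heart of the argument --- that tensoring with an ample bundle restores strict positivity and pushes the numerical dimension up to the maximum, so that the already-proved Proposition 5.1 applies to the middle term --- is what makes the inductive step work; the remaining cohomological bookkeeping is routine, and I expect no further obstacle here beyond the restricted-multiplier-ideal estimates that Section 4 provides.
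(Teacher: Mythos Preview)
Your argument is correct and follows the same d\'evissage-by-hyperplane strategy as the paper: reduce to Proposition~5.1 when $\nd(L,\varphi)=n$, and otherwise cut by a general $S\in|A|$, use the short exact sequence from Lemma~4.1\,(iii)--(iv) (with $\adj_{S}^{\epsilon}(\varphi)=\mathcal{I}_{+}(\varphi)$), and conclude by induction using $\nd(L|_{S},\varphi|_{S})\geq d$. The only cosmetic difference is how you dispatch the term $H^{p}(X,K_{X}+L+A\otimes\mathcal{I}_{+}(\varphi))$: the paper simply takes $A$ ample enough and invokes Nadel vanishing directly (note $\mathcal{I}_{+}(\varphi)=\mathcal{I}((1+\epsilon)\varphi)$ and $(L+A,(1+\epsilon)\varphi)$ has strictly positive curvature for $\epsilon\ll1$), whereas you route through Proposition~5.1 via $\nd(L+A,\varphi)=n$; both are valid, but the direct appeal to Nadel is shorter.
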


\begin{proof}

If $\nd(L, \varphi)=n$, the proposition has already been proved in Proposition 5.1.

Assume now that $\nd(L, \varphi) <n $. Let $ A $ be a sufficient ample line bundle large
enough with respect to $L$, 
and let $ S $ be the zero divisor of a general section of $H^{0}(X,A)$.
Let $ \epsilon> 0 $ be small enough such that the
condition of $(iv)$ of Lemma 4.1 is satisfied (by Lemma 4.1, such kind of
$\epsilon$ is independent of $A$ ! ) . 
Then we have an exact sequence:
$$0\rightarrow \mathcal{I}_{+}(\varphi)\otimes \mathcal{O}(-S)\rightarrow
\adj_{S}^{\epsilon}(\varphi)\rightarrow \mathcal{I}_{+}(S,
\varphi_{S})\rightarrow 0 .\leqno (3.1)$$
By $(iv)$ of Lemma 4.1, we have 
$$\adj_{S}^{\epsilon}(\varphi)=\mathcal{I}_{+}(\varphi) .$$
Then $(3.1)$ induces the following exact sequence:
$$H^{q}(S, K_{S}+L\otimes \mathcal{I}_{+}(\varphi|_{S}))\rightarrow
H^{q+1}(X,K_{X}+L\otimes \mathcal{I}_{+}(\varphi))\rightarrow H^{q+1}(X,K_{X}+ A
+ L\otimes \mathcal{I}_{+}(\varphi)),$$
for every $q$.
Taking $A$ to be sufficient ample, we have 
$$ H^{q+1}(X,K_{X}+ A + L\otimes
\mathcal{I}_{+}(\varphi))=0$$
by Nadel vanishing theorem. 
Thus the above exact sequence implies that
$$H^{q}(S, K_{S}+L\otimes \mathcal{I}_{+}(\varphi|_{S}))\rightarrow
H^{q+1}(X,K_{X}+L\otimes \mathcal{I}_{+}(\varphi))$$
is surjective.
The proposition is proved by induction on dimension.

\end{proof}

\begin{remark}
We can also prove that
$$H^{i}(X, K_{X}+A+L\otimes \adj_{S}^{\epsilon}(\varphi))=0 
\qquad\text{for}\hspace{5 pt} i>0 ,$$
which gives another proof of this theorem.
\end{remark}

The main goal of this section is to prove our vanishing theorem in the Kähler case. 
For this, we combine the methods developed in \cite{Mou} and \cite{DP}. 
First of all, we prove that $\mathcal{I}_{+}$ is essentially with analytic singularities. More precisely,

\begin{lemme}
Let $(L, \varphi)$ be a pseudo-effective line bundle on a compact kähler
manifold $X$. 
Then there exists a quasi-equisingular approximation $\{\varphi_{k}\}$ of
$\varphi$ such that 
$$\mathcal{I}((1+\frac{2}{k})\varphi_{k})=\mathcal{I}_{+}(\varphi)
\qquad\text{for}\hspace{5 pt} k\gg 1.\leqno (4.1)$$
\end{lemme}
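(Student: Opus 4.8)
The plan is to produce the desired quasi-equisingular approximation by starting from \emph{any} quasi-equisingular approximation $\{\psi_k\}$ of $\varphi$ (which exists by Theorem 2.2.1 of \cite{DPS 01}) and then modifying it so as to force the multiplier-ideal equality in $(4.1)$. First I would invoke the Noetherian property of $\mathcal{I}_{+}(\varphi)$: there is a fixed $\epsilon_0 > 0$ with $\mathcal{I}_{+}(\varphi) = \mathcal{I}((1+\epsilon')\varphi)$ for all $0 < \epsilon' \le \epsilon_0$. By property $(iii)$ of Definition 2, for this $\epsilon_0$ and each fixed $m$ there is an index $k_0(m)$ such that $\mathcal{I}(m(1+\epsilon_0)\psi_k) \subset \mathcal{I}(m\varphi)$ for $k \ge k_0(m)$; conversely, since $\psi_k \preccurlyeq \varphi$ we always have $\mathcal{I}(m\varphi) \subset \mathcal{I}(m\psi_k)$, hence in fact $\mathcal{I}(m\psi_k) \supset \mathcal{I}((1+\epsilon_0)m\varphi) = \mathcal{I}_{+}(m\varphi)$ when $m$ is divisible appropriately.

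The key trick is to pass to a subsequence and reindex. Choose an increasing sequence $k_1 < k_2 < \cdots$ growing fast enough (relative to the thresholds $k_0(\delta,m)$ appearing in Definition 2$(iii)$) that the reindexed family $\varphi_k := \psi_{k_j}$ for an appropriate $j = j(k)$ still satisfies all three conditions of Definition 2 — monotonicity $(ii)$ is inherited from any subsequence, $(i)$ is automatic, and $(iii)$ only gets easier under passing to a fast subsequence. The point of the fast growth is to arrange that for the new index $k$ one has simultaneously $\mathcal{I}((1+\tfrac2k)\varphi_k) \subset \mathcal{I}_{+}(\varphi)$: indeed $\tfrac2k \ge \epsilon_0$ fails for large $k$, so instead I would use that $\varphi_k$ has \emph{analytic singularities} — for such a potential $\mathcal{I}((1+\delta)\varphi_k)$ is \emph{eventually constant in $\delta$} as $\delta \to 0^{+}$ (it equals $\mathcal{O}(-\lfloor (1+\delta) F_k\rfloor)$ on a log resolution, which stabilizes). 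Combined with the quasi-equisingularity inclusion at a \emph{fixed} small rational level $\delta_k$, this gives $\mathcal{I}((1+\tfrac2k)\varphi_k) \subset \mathcal{I}(m_k \varphi)/\cdots \subset \mathcal{I}_{+}(\varphi)$ once $k$ is large and $\tfrac2k \le \delta_k$.

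For the reverse inclusion $\mathcal{I}_{+}(\varphi) \subset \mathcal{I}((1+\tfrac2k)\varphi_k)$: write $\mathcal{I}_{+}(\varphi) = \mathcal{I}((1+\epsilon_0)\varphi)$ and use $\varphi_k \preccurlyeq \varphi$, i.e. $\varphi \le \varphi_k + C_k$, so that $(1+\tfrac2k)\varphi_k \ge (1+\tfrac2k)\varphi - C_k'$; for $\tfrac2k \le \epsilon_0$ this yields $e^{-2(1+\frac2k)\varphi_k} \le C'' e^{-2(1+\frac2k)\varphi} \le C'' e^{-2(1+\epsilon_0)\varphi}$ up to the relevant comparison, whence any $f$ locally $L^2$ against $e^{-2(1+\epsilon_0)\varphi}$ is locally $L^2$ against $e^{-2(1+\frac2k)\varphi_k}$. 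This gives $(4.1)$ for all $k \gg 1$.

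The main obstacle is the bookkeeping in the reindexing: one must choose the subsequence so that the single parameter $\tfrac2k$ dominates a decreasing sequence of "error" exponents $\delta_k \to 0$ coming from Definition 2$(iii)$ while simultaneously being dominated by the fixed $\epsilon_0$ (so that the $\preccurlyeq$-comparison in both directions goes through) and so that the stabilization of $\mathcal{I}((1+\delta)\varphi_k)$ in $\delta$ — which happens at a level depending on $k$ — kicks in before $\tfrac2k$. Making these three constraints compatible is the heart of the argument; everything else is the standard Hölder / $L^2$-comparison bookkeeping already used in Lemma 5.2.
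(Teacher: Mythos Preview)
There is a genuine gap in your forward inclusion $\mathcal{I}((1+\tfrac{2}{k})\varphi_k)\subset\mathcal{I}_{+}(\varphi)$. You try to derive it from property $(iii)$ of Definition~2 together with the stabilization of $\mathcal{I}((1+\delta)\varphi_k)$ in $\delta$, but neither ingredient reaches $\mathcal{I}_{+}(\varphi)$. Property $(iii)$ with $m=1$ only gives $\mathcal{I}((1+\delta)\varphi_k)\subset\mathcal{I}(\varphi)$, never $\subset\mathcal{I}((1+\epsilon_0)\varphi)=\mathcal{I}_{+}(\varphi)$; and for $m\ge 2$ the right-hand side $\mathcal{I}(m\varphi)$ is irrelevant. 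The stabilization argument at best yields $\mathcal{I}((1+\tfrac{2}{k})\varphi_k)=\mathcal{I}_{+}(\varphi_k)$, but $\mathcal{I}_{+}(\varphi_k)\subset\mathcal{I}_{+}(\varphi)$ is simply false for a general quasi-equisingular approximation: already for $\varphi=\log|z|$ on $\mathbb{C}$ and $\varphi_k=(1-\tfrac{1}{k})\log|z|$ (which satisfies $(i)$--$(iii)$), one has $\mathcal{I}_{+}(\varphi_k)=\mathcal{O}$ while $\mathcal{I}_{+}(\varphi)=\mathfrak{m}$. Your chain ``$\mathcal{I}((1+\tfrac{2}{k})\varphi_k)\subset\mathcal{I}(m_k\varphi)/\cdots\subset\mathcal{I}_{+}(\varphi)$'' never gets filled in for this reason.

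What the paper actually uses is strictly stronger than Definition~2$(iii)$: it invokes the \emph{comparison of integrals} estimate behind the Bergman-kernel construction (Lemma~3.1 in the projective case, and its local analogue from \cite{DPS 01} in the K\"ahler case), namely $\mathcal{I}\bigl(\tfrac{sm}{m-s}\varphi_m\bigr)\subset\mathcal{I}(s\varphi)$ for every \emph{real} $s<m$. Taking $s=1+\tfrac{1}{k}$ and $m=f(k)\gg k$ so that $\tfrac{s\,f(k)}{f(k)-s}\le 1+\tfrac{2}{k}$ yields $\mathcal{I}((1+\tfrac{2}{k})\varphi_{f(k)})\subset\mathcal{I}((1+\tfrac{1}{k})\varphi)=\mathcal{I}_{+}(\varphi)$ once $\tfrac{1}{k}\le\epsilon_0$. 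This is the missing quantitative input; without it the abstract axioms of a quasi-equisingular approximation are not enough. Your reverse inclusion, on the other hand, is fine and agrees with the paper's.
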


\begin{proof}
By \cite{DPS 01}, there exists a quasi-equisingular approximation $\{\varphi_{k}\}$
of $\varphi$. The comparison of integrals techniques discussed in \cite{DPS 01}
implies that we can choose a subsequence $\{\varphi_{f(k)}\}$ such that
$$\mathcal{I}((1+\frac{2}{k})\varphi_{f(k)})\subset\mathcal{I}_{+}
(\varphi).\leqno (4.1)$$
In fact, if $X$ is projective, we take $s=1+\epsilon$ and a subsequence
$\{f(k)\}_{k=1}^{\infty}$ for $f(k)\gg k$ in Lemma 3.1. 
Using Lemma 3.1, we get the inclusion $(4.1)$. 
If $X$ is just a compact Kähler manifold, we can get the same inclusion for some
Stein neighborhood. 
Using the glueing techniques, we obtain also the inclusion $(4.1)$. (see \cite{DPS
01} for details)

On the other hand, since $\varphi_{k}$ is less singular than $\varphi$,
the definition of $\mathcal{I}_{+}(\varphi)$ implies that
$$\mathcal{I}((1+\frac{2}{k})\varphi_{f(k)})\supset\mathcal{I}_{+}(\varphi)
\qquad\text{for}\hspace{5 pt} k\gg 1.$$
The lemma is thus proved.

\end{proof}

The following lemma will be important in the proof of our Kawamata-Viehweg-Nadel
vanishing theorem. 
The advantage of the lemma is that to prove the convergence in higher degree cohomology with multiplier ideal sheaves, 
we just need to check the convergence for some smooth metric.

We first fix some notations. Let $(L,\varphi)$ be a pseudo-effective line
bundle on a compact Kähler manifold $X$ and
let $\mathcal{U}=\{U_{\alpha}\}_{\alpha\in I}$ be a Stein covering of $X$. 
We denote $U_{\alpha_{0}\alpha_{1}\cdots\alpha_{q}}=U_{\alpha_{0}}\cap\cdots\cap U_{\alpha_{q}}$ and
$ C^{q}(\mathcal{U}, K_{X}\otimes L\otimes \mathcal{I}_{+}(\varphi)) $ 
the $\check{C}$ech $q$-cochain associated to $K_{X}\otimes L\otimes \mathcal{I}_{+}(\varphi)$.
For an element $c\in C^{q}(\mathcal{U}, K_{X}\otimes L\otimes \mathcal{I}_{+}(\varphi))$, 
we denote its component on $U_{\alpha_{0}\alpha_{1}\cdots\alpha_{q}}$ by $c_{\alpha_{0}\alpha_{1}\cdots\alpha_{q}}$.
Let
$$\delta_{p}: C^{p-1}(\mathcal{U}, \mathcal{I}_{+}(\varphi))\rightarrow C^{p}(\mathcal{U}, \mathcal{I}_{+}(\varphi))$$
be the  $\check{C}$ech operator,
and $ Z^{p}(\mathcal{U}, \mathcal{I}_{+}(\varphi))=\Ker \delta_{p+1}$.

\begin{lemme}
Let $L$ be a line bundle on a compact Kähler manifold $X$ 
and $\varphi$ a singular metric on $L$. 
Let $\{U_{\alpha}\}_{\alpha\in I}$ be a Stein covering of $X$.
Let $u$ be an element in $\check{H}^{p}(X, K_{X}+L\otimes\mathcal{I}_{+}(\varphi))$.
If there exists a sequence 
$\{v_{k}\}_{k=1}^{\infty} \subset C^{p}(\mathcal{U}, K_{X}\otimes L\otimes \mathcal{I}_{+}(\varphi))$ 
in the same cohomology class as $u$ satisfying the key $L^{2}$ convergent condition:
$$\lim\limits_{k\to \infty}\int_{U_{\alpha_{0}...\alpha_{p}}} |v_{k, \alpha_{0}...\alpha_{p}}|
^{2} \rightarrow 0 ,\leqno (5.1)$$
where the $L^{2}$ norm $|v|^{2}$ in $(5.1)$ is taken for some fixed smooth metric on
$L$, 
then $u=0$ in $\check{H}^{p}(X, K_{X}+L\otimes\mathcal{I}_{+}(\varphi))$.
\end{lemme}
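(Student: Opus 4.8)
The plan is to move the problem from Čech cohomology to $L^{2}$-cohomology through the De Rham--Weil isomorphism, to use the smallness hypothesis to kill the image of $u$ in $H^{p}(X,K_{X}\otimes L)$ (without multiplier ideal), and then to recover $u=0$ itself from an injectivity property of the map induced by $\mathcal{I}_{+}(\varphi)\hookrightarrow\mathcal{O}_{X}$, which is where the pseudo-effectivity $\frac{i}{2\pi}\Theta_{\varphi}(L)\geq 0$ really enters, in the spirit of \cite{DP} and \cite{Mou}.

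\textbf{First step (the soft half).} I would first reduce to a finite Stein covering $\mathcal{U}$ with members relatively compact in slightly larger Stein charts: refining $\mathcal{U}$ changes neither the hypothesis nor the conclusion, such a covering carries a partition of unity $\{\rho_{\alpha}\}$ with globally bounded derivatives, and on its pseudoconvex intersections Hörmander's estimates make the associated $L^{2}$-Čech complexes compute sheaf cohomology. Fix $\epsilon>0$ with $\mathcal{I}_{+}(\varphi)=\mathcal{I}((1+\epsilon)\varphi)$ and put $h_{\varphi}=h_{0}e^{-2(1+\epsilon)\varphi}$. The standard Čech--Dolbeault zig-zag (iterated $\bar\partial\circ\sum_{\alpha}\rho_{\alpha}$) turns the cocycle $v_{k}$ into a global $\bar\partial$-closed $L$-valued $(n,p)$-form $\alpha_{k}$ in the $L^{2}$-Dolbeault complex computing $H^{p}(X,K_{X}\otimes L\otimes\mathcal{I}_{+}(\varphi))$, representing the class $u$; each component of $\alpha_{k}$ over $U_{\alpha_{0}\cdots\alpha_{p}}$ being a bounded smooth combination of the $v_{k,\beta_{0}\cdots\beta_{p}}$, it is square-integrable against $h_{0}$ and against $h_{\varphi}$, and
\[
\|\alpha_{k}\|^{2}_{L^{2}(X,\,h_{0},\,\omega)}\ \le\ C\!\!\sum_{\alpha_{0},\dots,\alpha_{p}}\int_{U_{\alpha_{0}\cdots\alpha_{p}}}\!|v_{k,\alpha_{0}\cdots\alpha_{p}}|^{2}\ \longrightarrow\ 0 .
\]
Since the De Rham--Weil isomorphism is compatible with $\mathcal{I}_{+}(\varphi)\hookrightarrow\mathcal{O}_{X}$, the very same $\alpha_{k}$ represent, in the $L^{2}$-complex of $K_{X}\otimes L$ for the smooth metric $h_{0}$, the image $\bar u$ of $u$ in the finite-dimensional space $H^{p}(X,K_{X}\otimes L)$. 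Finite-dimensionality of the cohomology forces the Čech coboundary to have closed range in the Hilbert complex, hence the projection from cocycles to cohomology is continuous; letting $k\to\infty$ and using the displayed bound yields $\bar u=0$.

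\textbf{Second step (the hard half, and the obstacle).} It remains to show that the map $H^{p}(X,K_{X}\otimes L\otimes\mathcal{I}_{+}(\varphi))\to H^{p}(X,K_{X}\otimes L)$ induced by $\mathcal{I}_{+}(\varphi)\hookrightarrow\mathcal{O}_{X}$ is injective, for then $\bar u=0$ gives $u=0$; here $\frac{i}{2\pi}\Theta_{\varphi}(L)\geq0$ is indispensable (the standing assumption of the section). I would prove it by harmonic integral theory along the lines of \cite{DP} and \cite{Mou}: using the quasi-equisingular approximation $\{\varphi_{k}\}$ of Lemma 5.4, for which $\mathcal{I}((1+\tfrac{2}{k})\varphi_{k})=\mathcal{I}_{+}(\varphi)$ and $\frac{i}{2\pi}\Theta_{\varphi_{k}}(L)\geq-\tau_{k}\omega$ with $\tau_{k}\downarrow0$, and a log resolution making each $\varphi_{k}\circ\pi$ simple normal crossing (so that the $L^{2}$-conditions become smooth $L^{2}$-conditions for a twisted bundle and the harmonic decomposition for singular metrics of \cite{DP}, \cite{Mou} applies), one realizes $u$ by an $(n,p)$-form $g$ harmonic for a limiting semi-positively curved singular metric, and the condition $\bar u=0$ provides a primitive $g=\bar\partial\eta$ with $\eta$ only square-integrable for the smooth metric. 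The identity $\|g\|^{2}=\langle\bar\partial\eta,g\rangle=\langle\eta,\bar\partial^{*}g\rangle=0$ would be immediate if $\eta$ were in the $L^{2}$-domain of $\bar\partial$ for the singular metric; producing such a primitive is the real difficulty, and is exactly the Kähler-analytic core imported from \cite{DP}, \cite{Mou}: one approximates the singular metric by less singular $h_{\nu}$, solves $\bar\partial\eta_{\nu}=g$ with $L^{2}_{h_{\nu}}$-estimates furnished by the Bochner--Kodaira--Nakano inequality together with the almost-semi-positivity $\frac{i}{2\pi}\Theta_{h_{\nu}}(L)\geq-\tau_{\nu}\omega$ (the loss $\tau_{\nu}\to0$ being absorbed in the limit because $g$ is $\bar\partial$-closed and co-closed, so that no strictly positive twist is required), and passes to the limit to annihilate the boundary term. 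Apart from this point, the remaining verifications --- invariance under refinement of $\mathcal{U}$, that the $L^{2}$-Čech complexes compute cohomology, and that the Čech--Dolbeault transfer preserves smallness of the $h_{0}$-norm with uniform constants --- are routine.
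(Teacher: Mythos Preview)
Your route is very different from the paper's, and your Step~2 contains a genuine gap that the paper's argument avoids entirely.

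The paper never leaves the \v{C}ech complex with coefficients in $\mathcal{I}_{+}(\varphi)$. Its key observation is that $C^{p}(\mathcal{U},K_{X}\otimes L\otimes\mathcal{I}_{+}(\varphi))$ is already a Fr\'echet space for the seminorms $f\mapsto\int_{V}|f|^{2}$, $V\Subset U_{\alpha_{0}\cdots\alpha_{p}}$, taken with the \emph{smooth} metric. The only thing to check is closedness: if $f_{i}\in\mathcal{I}_{+}(\varphi)$ and $f_{i}\to f_{0}$ locally in $L^{2}$, then $f_{0}\in\mathcal{I}_{+}(\varphi)$. This is where Lemma~5.4 enters: one has $\mathcal{I}_{+}(\varphi)=\mathcal{I}(\psi)$ for some $\psi$ with analytic singularities, and after a log resolution membership in $\mathcal{I}(\psi)$ becomes the divisorial condition $(f\circ\pi)\cdot J\in\mathcal{O}(-E)$ along a normal crossing divisor, which is visibly closed under local $L^{2}$ (hence weak) limits. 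Once this is known, $\delta_{p}$ is continuous, $Z^{p}(\mathcal{U},\mathcal{I}_{+}(\varphi))$ is Fr\'echet, and since $\check{H}^{p}(X,K_{X}\otimes L\otimes\mathcal{I}_{+}(\varphi))$ is finite-dimensional the image of $\delta_{p}$ is closed, so the projection $Z^{p}\to\check{H}^{p}$ is continuous. The hypothesis $(5.1)$ says precisely that $v_{k}\to 0$ in this Fr\'echet topology, and since all $v_{k}$ project to $[u]$ one gets $[u]=0$ immediately. No Dolbeault transfer, no passage to $H^{p}(X,K_{X}\otimes L)$, no injectivity theorem.

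You apply the same closed-range/continuity argument, but to $H^{p}(X,K_{X}\otimes L)$ rather than to $H^{p}(X,K_{X}\otimes L\otimes\mathcal{I}_{+}(\varphi))$, presumably because you did not see why the latter cochain space should be Fr\'echet for the smooth $L^{2}$ seminorms. That forces your Step~2, the injectivity of
\[
H^{p}(X,K_{X}\otimes L\otimes\mathcal{I}_{+}(\varphi))\longrightarrow H^{p}(X,K_{X}\otimes L),
\]
which you do not prove. This is a Koll\'ar--Enoki type statement whose K\"ahler version, even granting $\frac{i}{2\pi}\Theta_{\varphi}(L)\geq 0$, is of depth comparable to the vanishing theorem that Lemma~5.5 is designed to serve; invoking it here is close to circular. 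Your sketch does not close the gap either: with curvature only $\geq -\tau_{\nu}\omega$, the Bochner--Kodaira inequality gives no uniform bound on the primitives $\eta_{\nu}$ solving $\bar\partial\eta_{\nu}=g$, so there is no control allowing passage to a limit $\eta$ in the singular $L^{2}$ domain, and the intended identity $\|g\|^{2}=\langle\eta,\bar\partial^{*}g\rangle=0$ is unjustified. The fix is not to strengthen Step~2 but to drop it: run your own closed-range argument directly in $Z^{p}(\mathcal{U},\mathcal{I}_{+}(\varphi))$, using the $L^{2}$-closedness of $\mathcal{I}_{+}(\varphi)$ explained above.
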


\begin{proof}

On the $p$-cochain space $C^{p}(\mathcal{U}, \mathcal{I}_{+}(\varphi))$, we first
define a familly of natural semi-norms:
for $f\in C^{p}(\mathcal{U}, \mathcal{I}_{+}(\varphi))$,
we define a family of semi-norms: 
$$\{ \int_{V} |f|^{2}\omega^{n} | \text{ any open set }V\Subset U_{\alpha_{0}...\alpha_{p}}\} .$$
We claim that 
$C^{p}(\mathcal{U}, \mathcal{I}_{+}(\varphi))$ is a Fréchet space with respect
to the family of semi-norms as above. 

Proof of the claim:
We need to prove that if $f_{i}\in \mathcal{I}_{+}(\varphi)$ and $
f_{i}\rightarrow f_{0}$ in the family of the above semi-norms, then
$ f_{0}\in \mathcal{I}_{+}(\varphi)$. 
First of all, by the definition of the semi-norms,  $f_{0}$ is holomorphic. 
By Lemma 5.4 we can choose a quasi-psh function $\psi$ with analytic singularities such that
$$\mathcal{I}(\psi)=\mathcal{I}_{+}(\varphi).$$
Let $\pi : X_{k}\rightarrow X$ be a log resolution of $\psi$.
Then the current $E=\lfloor dd^{c}(\psi\circ\pi)\rfloor$ has normal crossing
singularities.
Since $f_{i}\in \mathcal{I}_{+}(\varphi)=\mathcal{I}(\psi)$, 
we have
$$(f_{i}\circ\pi )\cdot J\in \mathcal{O}(-E),\leqno (5.2)$$
where $J$ is the Jacobian of $\pi$. 
Since $f_{i}\circ\pi\rightharpoonup f_{0}\circ\pi$ in the sense of weak
convergence and $E$ has normal crossing singularities, 
$(5.2)$ implies that
$$(f_{0}\circ\pi )\cdot J\in \mathcal{O}(-E) .$$ 
Therefore $f_{0}\in\mathcal{I}_{+}(\varphi)$. 
The claim is proved.
\vspace{5 pt}

As a consequence, the $\check{C}$ech operator $\delta_{p}$ is continuous 
and its kernel $Z^{p-1}(\mathcal{U}, \mathcal{I}_{+}(\varphi))$ 
is also a Fréchet space.
Therefore we have a refined continuous morphism between the following two Fréchet
spaces:
$$\delta_{p}: C^{p-1}(\mathcal{U}, \mathcal{I}_{+}(\varphi))\rightarrow
Z^{p}(\mathcal{U}, \mathcal{I}_{+}(\varphi)).$$
Since the cokernel of $\delta_{p}$ is $\check{H}^{p}(X, (K_{X}+L)\otimes 
\mathcal{I}_{+}(\varphi))$ which is of finite dimension, 
by the standard Fredholm theory, the image of $\delta_{p}$ is closed. 
Thus the quotient morphism
$$ Z^{p}(\mathcal{U}, \mathcal{I}_{+}(\varphi))\rightarrow
\frac{Z^{p}(\mathcal{U}, \mathcal{I}_{+}(\varphi))}{\image (\delta_{p})}\leqno
(5.3)$$
is continuous.
By observing $\frac{Z^{p}(\mathcal{U}, \mathcal{I}_{+}(\varphi))}{\image
(\delta_{p})}=\check{H}^{p}(X,K_{X}+L\otimes  \mathcal{I}_{+}(\varphi))$,
$(5.3)$ implies that
$$Z^{p}(\mathcal{U}, \mathcal{I}_{+}(\varphi))\rightarrow
\check{H}^{p}(X,K_{X}+L\otimes  \mathcal{I}_{+}(\varphi))\leqno (5.4)$$
is continuous.

Thanks to the claim, the condition $(5.1)$ implies that $\{v_{k}\}_{k=1}^{\infty}$ 
tends to
$0$ in the Fréchet space $Z^{p}(\mathcal{U}, \mathcal{I}_{+}(\varphi))$. 
Therefore their images in $\check{H}^{p}(X, K_{X}\otimes L\otimes
\mathcal{I}_{+}(\varphi))$ tend to $0$ by the continuity of $(5.4)$.
Since by construction their images are in the same class $[u]$, 
we conclude 
$u=0$ in $\check{H}^{p}(X, K_{X}\otimes L\otimes \mathcal{I}_{+}(\varphi))$.

\end{proof}

\begin{remark}
The essential point here is that for the quasi-psh function $\varphi$ without analytic singularities, 
we should consider $\mathcal{I}_{+}(\varphi)$ instead of $\mathcal{I}(\varphi)$.
We do not know whether this lemma is also true for $\mathcal{I}(\varphi)$. 
\end{remark}

We prove Theorem 1.3 in the rest of this section. 
Since $\varphi$ has not necessarily analytic singularities, 
this makes troubles when we use $L^{2}$ estimates.
Therefore we replace $\varphi$ by a quasi-equisingular approximation. 
Thanks to Lemma 5.4,  
we can keep $\mathcal{I}_{+}(\varphi)$ 
by the metrics $(1+\frac{2}{k})\varphi_{k}$ with analytic singularities.
We should also use a Monge-Ampère equation to construct other metrics $ \widehat{\varphi}_{k}$ 
of which we can control the eigenvalues.
Therefore we can use $L^{2}$ estimates for every $ \widehat{\varphi}_{k}$. By some
delicate analysis, we can prove the theorem.
This idea comes from \cite{DP} and \cite{Mou}.
We will construct the key metrics $ \widehat{\varphi}_{k}$ in Lemma 5.6
and prove some important properties of $ \widehat{\varphi}_{k}$ in Lemma 5.7 and
5.8.
We prove finally the vanishing theorem in Proposition 5.9.

\begin{lemme}
Let $(L, \varphi)$ be a pseudo-effective line bundle on a compact
kähler manifold $(X, \omega )$ and
let $p\geq n-\nd (L,\varphi) +1$.
Then there exists a sequence of new metrics $\{ \widehat{\varphi}_{k}
\}_{k=1}^{\infty}$ with analytic singularities on $L$ satisfying the following
properties:

$(i)$ $\mathcal{I}(\widehat{\varphi}_{k})=\mathcal{I}_{+}(\varphi)$ for all $k$.

$(ii)$ Let $\lambda_{1, k}\leq\lambda_{2, k}\leq\cdots\leq\lambda_{n, k}$ be the
eigenvalues of $\frac{i}{2\pi}\Theta_{\widehat{\varphi}_{k}}(L)$ 
with respect to the base metric $\omega$. 
Then there exist two sequences
$\tau_{k}\rightarrow 0, \epsilon_{k}\rightarrow 0$ such that  
$$\epsilon_{k}\gg \tau_{k}+\frac{1}{k} \qquad\text{and}
 \qquad \lambda_{1, k}(x)\geq
-\epsilon_{k}-\frac{C}{k}-\tau_{k}$$
for all $x\in X$ and $k$, where $C$ is a constant independent of $k$.

$(iii)$ We can choose $\beta>0$ and $0< \alpha <1$ independent of $k$ such that
for every $k$, there exists an open subset $U_{k}$ of $X$ satisfying 
$$\vol
(U_{k})\leq \epsilon_{k}^{\beta}\qquad\text{and}\qquad
\lambda_{p}+2\epsilon_{k}\geq \epsilon_{k}^{\alpha} \hspace{5 pt}\text{ on } X\setminus U_{k}.$$
\end{lemme}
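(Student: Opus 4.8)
The plan is to keep the quasi-equisingular approximation of Lemma 5.4 and then bend each of its members by a Monge--Amp\`ere potential, so that the new curvature acquires the eigenvalue profile demanded by $(ii)$ and $(iii)$. Concretely, set $\psi_k:=(1+\tfrac2k)\varphi_k$, where $\{\varphi_k\}$ is the approximation of Lemma 5.4, so that $\mathcal I(\psi_k)=\mathcal I_+(\varphi)$ for $k\gg1$; write $\theta_k:=\frac{i}{2\pi}\Theta_{\psi_k}(L)$ and $d:=\nd(L,\varphi)$. Then $\psi_k$ has analytic singularities, $\theta_k\geq-\tau_k\omega$ for some $\tau_k\to0$, and, since $(\theta_k)_{\ac}=(1+\tfrac2k)\bigl(\tfrac{i}{2\pi}\Theta_{\varphi_k}(L)\bigr)_{\ac}+O(1/k)$ in $C^{0}$-norm, the definition of $\nd(L,\varphi)$ together with the material of Sections 2--3 gives $\lim_k\int_X(\theta_k)_{\ac}^{\,d}\wedge\omega^{n-d}=\langle(i\Theta_\varphi)^{d}\rangle\wedge\omega^{n-d}=:c_0>0$ and $\lim_k\int_X(\theta_k)_{\ac}^{\,j}\wedge\omega^{n-j}=0$ for $j>d$. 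To keep $(\theta_k)_{\ac}$ a genuine smooth form I would first pass to a log resolution $\mu_k\colon\widehat X_k\to X$ of $\psi_k$ and rationalize the relevant class exactly as in the proofs of Propositions 3.2 and 3.3; the effective divisorial part of $\mu_k^{*}\theta_k$ only adds a nonnegative current to every curvature that occurs below, so I suppress it and describe the rest as if $(\theta_k)_{\ac}$ were already smooth on $X$.

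Next I would choose $\epsilon_k\to0$ with $\epsilon_k\gg\tau_k+\tfrac1k$, but decaying \emph{slowly enough} that $\bigl|\int_X(\theta_k)_{\ac}^{\,n-j}\wedge\omega^{j}\bigr|=o(\epsilon_k^{\,n-d-j})$ for every $0\leq j<n-d$ (possible, since these integrals tend to $0$). Put $\omega_k:=(\theta_k)_{\ac}+(\epsilon_k+\tau_k)\omega\geq\epsilon_k\omega>0$; expanding $V_k:=\int_X\omega_k^{\,n}$ in powers of $\epsilon_k+\tau_k$, the chosen rate kills the terms of order $<n-d$, the terms of order $>n-d$ are of lower order, and the term of order $n-d$ contributes $\asymp c_0\,\epsilon_k^{\,n-d}$, so that $V_k\asymp\epsilon_k^{\,n-d}$. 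By Yau's theorem I then solve $(\omega_k+dd^c u_k)^{n}=\bigl(V_k/\!\int_X\omega^{n}\bigr)\,\omega^{n}$ with $u_k$ smooth and $\rho_k:=\omega_k+dd^c u_k>0$, and set $\widehat\varphi_k:=\psi_k+u_k$. Since $u_k$ is bounded, $\mathcal I(\widehat\varphi_k)=\mathcal I(\psi_k)=\mathcal I_+(\varphi)$, which is $(i)$; and the $\ac$-part of $\frac{i}{2\pi}\Theta_{\widehat\varphi_k}(L)=\theta_k+dd^c u_k$ equals $\rho_k-(\epsilon_k+\tau_k)\omega$, so its eigenvalues with respect to $\omega$ are $\lambda_{i,k}=\nu_{i,k}-(\epsilon_k+\tau_k)$, where $0<\nu_{1,k}\leq\cdots\leq\nu_{n,k}$ are the eigenvalues of $\rho_k$ with respect to $\omega$; in particular $\lambda_{1,k}\geq-(\epsilon_k+\tau_k)$, which gives $(ii)$ after relabelling the auxiliary constants.

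For $(iii)$ I would argue as follows. One has $\prod_i\nu_{i,k}=V_k/\!\int_X\omega^{n}\asymp\epsilon_k^{\,n-d}$, while $\int_X\operatorname{tr}_\omega\rho_k\,\omega^{n}=n\{[\omega_k]\}\cdot\{[\omega]\}^{n-1}$ stays bounded, say by $M$, because the intersection numbers formed from $[(\theta_k)_{\ac}]$ remain bounded. Fix $\sigma>0$ so small that $\sigma(n-p)<p-(n-d)$ — possible since $p\geq n-d+1$ forces $p-(n-d)\geq1$ — and fix $\alpha$ with $\tfrac{\sigma(n-p)+(n-d)}{p}<\alpha<1$. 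By Chebyshev's inequality the open set $U_k:=\{x\in X:\operatorname{tr}_\omega\rho_k(x)>\epsilon_k^{-\sigma}\}$ has $\vol(U_k)\leq M\epsilon_k^{\sigma}\leq\epsilon_k^{\sigma/2}$ for $k\gg1$, so I put $\beta:=\sigma/2$. On $X\setminus U_k$ every $\nu_{i,k}\leq\operatorname{tr}_\omega\rho_k\leq\epsilon_k^{-\sigma}$, and if $\nu_{p,k}<\epsilon_k^{\alpha}$ at some such point, then $c_1\epsilon_k^{\,n-d}\leq\prod_i\nu_{i,k}\leq\nu_{p,k}^{\,p}\,\nu_{n,k}^{\,n-p}<\epsilon_k^{\,\alpha p-\sigma(n-p)}$, which is impossible for $k$ large because $\alpha p-\sigma(n-p)>n-d$. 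Hence $\nu_{p,k}\geq\epsilon_k^{\alpha}$ on $X\setminus U_k$, and then $\lambda_{p,k}+2\epsilon_k=\nu_{p,k}+(\epsilon_k-\tau_k)\geq\nu_{p,k}\geq\epsilon_k^{\alpha}$ there, which is $(iii)$ with $\alpha,\beta$ independent of $k$.

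The hard part is precisely $(iii)$: extracting $n-p+1$ genuinely positive eigenvalues off a set of controlled, shrinking volume out of the single scalar identity $\det_\omega\rho_k=V_k/\!\int_X\omega^{n}$ together with the a priori trace bound. It works only because $V_k\gtrsim\epsilon_k^{\,n-\nd(L,\varphi)}$ — the one and only point where the hypothesis $p\geq n-\nd(L,\varphi)+1$ enters — and because $\epsilon_k$ is let go to $0$ slowly. The remaining bookkeeping (the log resolution, and the descent of the Monge--Amp\`ere potential so that $\widehat\varphi_k$ is an honest metric on $L$ with $\mathcal I(\widehat\varphi_k)=\mathcal I_+(\varphi)$) is routine but delicate, and I would carry it out with the rationalization device already used in Propositions 3.2 and 3.3, following \cite{DP} and \cite{Mou}.
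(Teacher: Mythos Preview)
Your argument follows the paper's strategy closely: take a quasi-equisingular approximant, pass to a log resolution, solve a Monge--Amp\`ere equation there to redistribute the mass of the curvature, and read off the eigenvalue/volume dichotomy $(iii)$ from the pointwise determinant lower bound combined with the cohomological trace bound. Your version of $(iii)$ is the contrapositive of the paper's: you define $U_k$ as the large-trace set and bound $\nu_{p,k}$ from below on the complement, whereas the paper (Lemma~5.8) defines $U_k=\{\widehat\lambda_p<\epsilon_k^{\alpha}\}$ and bounds its volume by observing that on $U_k$ the sum $\sum_{i>p}\widehat\lambda_i$ must be large. The two arguments rest on the same two inequalities and are interchangeable.

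The one structural difference is that you set $\widehat\varphi_k=\psi_k+u_k$ with the \emph{full} Monge--Amp\`ere potential, so that $(i)$ is immediate from the boundedness of $u_k$. The paper instead takes
\[
\widehat\varphi_k=(1+\tfrac{2}{k}-s)\,\varphi_k\circ\pi+s\bigl(\varphi_k\circ\pi+\psi_{k,\epsilon,\delta}+\delta\ln|E_k|_{h_k}\bigr),
\]
i.e.\ it blends in the MA part with a fixed \emph{small} coefficient $s>0$, and then proves $(i)$ separately (Lemma~5.7). This looks like an unnecessary complication if one only wants Lemma~5.6, but it is what makes the \emph{uniform-in-$k$} estimate
\[
\int_U|f|^2e^{-2\widehat\varphi_k}\leq C_{|f|_{L^\infty}}\Bigl(\int_U|f|^2e^{-2(1+s_1)\varphi}\Bigr)^{1/(1+s_1)}
\]
of Lemma~5.7 go through: one applies H\"older, uses that $\int_X e^{-2a(\text{MA part})}$ is uniformly bounded (from the uniform curvature lower bound and the normalization $\sup=0$), and chooses $s$ so that $\tfrac{s(1+s_1)}{s_1}\leq a$. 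In your construction $u_k$ is bounded for each $k$, but $\|u_k\|_\infty$ need not stay bounded as the reference form $\omega_k$ degenerates, so while your metrics satisfy Lemma~5.6 as stated, feeding them into the proof of Proposition~5.9 (where one needs $\int_{U_k}|f|^2e^{-2\widehat\varphi_k}\to0$ with $\vol(U_k)\to0$) would require an additional argument.
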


\begin{proof}

Recall that we first fix a smooth metric $h_{0}$ on $L$.
Then given a new metric $\varphi$ as a form of function, 
we just means that the hermitian metric form on $L$ is $h_{0}e^{-\varphi}$.

By definition, there exists a $s_{1} > 0$ such that
$$\mathcal{I}_{+}(\varphi)=\mathcal{I}((1+s_{1})\varphi). \leqno (*)$$
Let $\varphi_{k}$ be the quasi-equisingular approximation of $\varphi$ in Lemma 5.4.
Then there is a positive sequence $\tau_{k}\rightarrow 0$ such that 
$$\frac{i}{2\pi}\Theta_{\varphi_{k}}(L)\geq -\tau_{k} \omega \qquad
\text{and}
 \qquad
\mathcal{I}((1+\frac{2}{k})\varphi_{k})=\mathcal{I}_{+}(\varphi)\leqno (6.1)$$
for every $k$. 
We can also choose a positive sequence $\epsilon_{k}\rightarrow 0$ such that
$\epsilon_{k}\gg \tau_{k}+\frac{1}{k}$.

We begin to construct new metrics by solving a Monge-Ampère equation.
Let $\pi: X_{k}\rightarrow X$ be a log resolution of $\varphi_{k}$. 
Then $dd^{c}(\varphi_{k}\circ \pi)$ is of the form $[E_{k}]+C^{\infty}$ 
where $[E_{k}]$ is a normal crossing $\mathbb{Q}$-divisor.
Let $Z_{k}=\pi_{*}( E_{k} )$.
By \cite{Bou}, there exists a smooth metric $h_{k}$ on $[E_{k}]$, such that for all
$\delta>0$ small enough, 
$$\pi^{*}(\omega)+ \delta\frac{i}{2\pi}\Theta_{h_{k}}(-E_{k})$$
is a Kähler form on $X_{k}$.
Therefore, fixing a positive sequence $\delta_{k}\rightarrow 0$,
we can solve a Monge-Ampère equation on $X_{k}$:
$$((\frac{i}{2\pi}\pi^{*}\Theta_{\varphi_{k}}(L))_{\ac}+\epsilon_{k}\pi^{*}\omega+\delta_{k}\frac{i}{2\pi}
\Theta_{h_{k}}(-E_{k})+dd^{c}\psi_{k,\epsilon, \delta_{k}})^{n}\leqno (6.2)$$
$$=C(k,\delta,\epsilon)\cdot \epsilon_{k}^{n-d}(\omega+\delta_{k}\frac{i}{2\pi}\Theta_{h_{k}}(-E_{k}))^{n}$$
with the condition
$$\sup_{z\in X_{k}} (\varphi_{k}\circ\pi+\psi_{k,\epsilon, \delta_{k}}+\delta_{k}\ln |E_{k}|_{h_{k}})(z)=0$$
where $d=\nd(L,\varphi)$.
Thanks to the definition of the numerical dimension, there exists a
uniform constant $C>0$ such that $C(k,\delta,\epsilon)\geq C$.
By observing moreover that
$$i\partial\overline{\partial}\ln
|E_{k}|_{h_{k}}=[E_{k}]+\frac{i}{2\pi}\Theta_{h_{k}}(-E_{k}),$$
the equation $(6.2)$ implies
$$\frac{i}{2\pi}\Theta_{\varphi_{k}+\psi_{k,\epsilon, \delta_{k}}+\delta_{k}\ln
|E_{k}|_{h_{k}}}(\pi^{*}L)\geq -\epsilon_{k}\omega .\leqno (6.3)$$

We define now a new metric on $(X_{k}, \pi^{*}L)$ by 
(i.e. $h_{0}e^{-\widehat{\varphi}_{k}}$ as a metric form ! )
$$\widehat{\varphi}_{k}=(1+\frac{2}{k}-s)\varphi_{k}\circ\pi+s(\varphi_{k}
\circ\pi+\psi_{k,\epsilon, \delta}+\delta\ln |E_{k}|_{h_{k}})\leqno (6.4)$$
where $ s $ is a constant sufficient small with respect to $ s_{1}$. 
$s$ will be made 
precise in Lemma 5.7.
Then
$$\frac{i}{2\pi}\Theta_{\widehat{\varphi}_{k}}(\pi^{*}L)=(1-s)\frac{i}{2\pi}\Theta_{\varphi_{k}}(\pi^{*}L)+s
\frac{i}{2\pi}\Theta_{\varphi_{k}+\psi_{k,\epsilon, \delta_{k}}+\delta_{k}\ln
|E_{k}|_{h_{k}}}(\pi^{*}L)+\frac{2}{k}dd^{c}\varphi_{k}. \leqno (6.5)$$
$(6.3)$ gives the estimate for the second term of the right hand side of $(6.5)$.
For the last term of the right hand side of $(6.5)$, since
$\varphi_{k}$ is a function on $X$ satisfying
$$\frac{i}{2\pi}\Theta_{\varphi_{k}}(L)=\frac{i}{2\pi}\Theta_{h_{0}}(L)+dd^{c}\varphi_{k}\geq -c\omega ,$$
we have
$$dd^{c}\varphi_{k}\geq -C\omega$$ 
for some uniform constant $C$.
Then
$$\frac{i}{2\pi}\Theta_{\widehat{\varphi}_{k}}(\pi^{*}L)\geq -\epsilon_{k}\omega-\tau_{k}\omega-\frac{C}{k}\omega .\leqno (6.6)$$
Thus $\widehat{\varphi}_{k}$ induces a quasi-psh function on $X$ 
by extending it from $X\setminus Z_{k}$ to the
whole $X$. 
It is the metric that we want to construct. 
We denote it also $\widehat{\varphi}_{k}$ for simplicity.
We will prove properties $(i)$ to $(iii)$ in Lemma 5.7 and Lemma 5.8.
\end{proof}

\begin{lemme}
If we take $ s $ in $(6.4)$ small enough with respect to $s_{1}$ in $(*)$ of Lemma 5.6,  
then
$$\int_{U} |f|^{2} e^{-2\widehat{\varphi}_{k}}\leq C_{|f|_{L^{\infty}}}\int_{U}
(|f|^{2} e^{-2(1+s_{1})\varphi})^{\frac{1}{1+s_{1}}}$$
for all $U$ in $X$ and $k\gg 1$, where
$C_{|f|_{L^{\infty}}}$ is a constant depending only on $|f|_{L^{\infty}}$
(in particular, it is independent of the open subset $U$ and $k$).
As a consequence, we have
$$\mathcal{I}(\widehat{\varphi}_{k})=\mathcal{I}_{+}(\varphi)
\qquad\text{for any}\hspace{5 pt} k.$$
\end{lemme}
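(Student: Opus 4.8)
The plan is to substitute the Monge-Ampère potential $\psi_{k,\epsilon,\delta_{k}}$ into the definition $(6.4)$ and then reduce the claimed $L^{2}$ comparison to an application of H\"older's inequality. First I would record the shape of $\widehat{\varphi}_{k}$: since $\ln|E_{k}|_{h_{k}}$ is, modulo a smooth function, the logarithm of the canonical section of $\mathcal{O}(E_{k})$, it has the same polar set and the same divisorial part as $\varphi_{k}\circ\pi$; and the solution $\psi_{k,\epsilon,\delta_{k}}$ of $(6.2)$ is bounded because the right-hand side of $(6.2)$ is a smooth positive volume form. Plugging this into $(6.4)$ gives, on $X_{k}$,
$$\widehat{\varphi}_{k}=\bigl(1+\tfrac{2}{k}+s\,\delta_{k}\bigr)\,\varphi_{k}\circ\pi+O(1),$$
equivalently $\widehat{\varphi}_{k}=(1+\tfrac{2}{k})\varphi_{k}\circ\pi+s\psi_{k,\epsilon,\delta_{k}}+s\delta_{k}\ln|E_{k}|_{h_{k}}$, and the normalization $\sup_{X_{k}}(\varphi_{k}\circ\pi+\psi_{k,\epsilon,\delta_{k}}+\delta_{k}\ln|E_{k}|_{h_{k}})=0$ shows that the extra piece $s\,\theta_{k}$, with $\theta_{k}:=\varphi_{k}\circ\pi+\psi_{k,\epsilon,\delta_{k}}+\delta_{k}\ln|E_{k}|_{h_{k}}$, is everywhere $\le 0$.

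For the inequality itself I would pull everything back to $X_{k}$ (the Jacobian factors of $\pi$ are harmless) and split the weight $e^{-2\widehat{\varphi}_{k}}$ by H\"older's inequality with exponents $1+s_{1}$ and $\tfrac{1+s_{1}}{s_{1}}$, choosing $s$ small enough with respect to $s_{1}$ so that after the splitting the ``$\varphi_{k}$-part'' carries a coefficient at most $1+s_{1}$; the bounded contributions of $s\psi_{k,\epsilon,\delta_{k}}$ and of the smooth discrepancies are absorbed into a constant. The key input is the quasi-equisingular comparison $\mathcal{I}((1+\tfrac{2}{k})\varphi_{k})=\mathcal{I}_{+}(\varphi)=\mathcal{I}((1+s_{1})\varphi)$ from $(6.1)$ together with $\varphi\preccurlyeq\varphi_{k}$, which lets one pass from integrals weighted by $e^{-2(1+\tfrac{2}{k})\varphi_{k}}$ to integrals weighted by $e^{-2(1+s_{1})\varphi}$; the factor $|f|^{2s_{1}/(1+s_{1})}$ that is produced is bounded above by $|f|_{L^{\infty}}^{2s_{1}/(1+s_{1})}$, which yields the constant $C_{|f|_{L^{\infty}}}$, uniform in $U$ and $k$ because $\psi_{k,\epsilon,\delta_{k}}$ and the discrepancies are uniformly bounded and $X$ is compact.

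For the consequence $\mathcal{I}(\widehat{\varphi}_{k})=\mathcal{I}_{+}(\varphi)$ I would argue the two inclusions separately. The inclusion $\mathcal{I}_{+}(\varphi)\subset\mathcal{I}(\widehat{\varphi}_{k})$ follows at once from the displayed inequality: if $f\in\mathcal{I}((1+s_{1})\varphi)$, then by H\"older on the bounded set $U$ one has $\int_{U}(|f|^{2}e^{-2(1+s_{1})\varphi})^{1/(1+s_{1})}\le(\int_{U}|f|^{2}e^{-2(1+s_{1})\varphi})^{1/(1+s_{1})}(\vol U)^{s_{1}/(1+s_{1})}<+\infty$, hence the left-hand side is finite. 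For the reverse inclusion I would use the first step: by $\widehat{\varphi}_{k}=(1+\tfrac{2}{k}+s\delta_{k})\varphi_{k}\circ\pi+O(1)$ the function $\widehat{\varphi}_{k}$ is more singular than $(1+\tfrac{2}{k})\varphi_{k}\circ\pi$ on $X_{k}$, so $\mathcal{I}(\widehat{\varphi}_{k}\circ\pi)\subset\mathcal{I}((1+\tfrac{2}{k})\varphi_{k}\circ\pi)$; twisting by $\mathcal{O}(K_{X_{k}/X})$ and pushing forward gives $\mathcal{I}(\widehat{\varphi}_{k})\subset\mathcal{I}((1+\tfrac{2}{k})\varphi_{k})=\mathcal{I}_{+}(\varphi)$.

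The main obstacle I expect is keeping the constant $C_{|f|_{L^{\infty}}}$ genuinely independent of $k$ and of $U$ while carrying out the H\"older splitting: one has only the equality of multiplier ideals $\mathcal{I}((1+\tfrac{2}{k})\varphi_{k})=\mathcal{I}((1+s_{1})\varphi)$ and not a pointwise domination of the form $\widehat{\varphi}_{k}\ge\varphi-C$, so the exponent $\tfrac{1}{1+s_{1}}$ and the smallness of $s$ have to be exploited in exactly the right place, and one must check that $\|\psi_{k,\epsilon,\delta_{k}}\|_{L^{\infty}}$ and the smooth discrepancy terms really stay bounded uniformly in $k$ — this is where the compactness of $X$ and the fixed cohomological data enter.
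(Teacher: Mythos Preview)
Your overall strategy---H\"older split with exponents $1+s_{1}$ and $(1+s_{1})/s_{1}$, then pass from $\varphi_{k}$ to $\varphi$ via $\varphi\preccurlyeq\varphi_{k}$---is exactly the paper's, and your argument for the two inclusions at the end is fine. The genuine gap is the step you yourself flag in the last paragraph: you need the auxiliary factor in the H\"older split to be bounded \emph{uniformly in $k$}, and you propose to obtain this from a uniform $L^{\infty}$ bound on $\psi_{k,\epsilon,\delta_{k}}$. That bound is not available. The Monge--Amp\`ere equation $(6.2)$ is solved on $X_{k}$, a log resolution that changes with $k$; both the background K\"ahler form $\pi^{*}\omega+\delta_{k}\frac{i}{2\pi}\Theta_{h_{k}}(-E_{k})$ and the reference form on the left degenerate as $k\to\infty$, so Yau's $C^{0}$ estimate gives nothing uniform. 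Compactness of $X$ and the fixed cohomology class do not control oscillations of solutions on a varying family of blow-ups. The same objection applies to the ``smooth discrepancy'' between $\ln|E_{k}|_{h_{k}}$ and $\varphi_{k}\circ\pi$: it lives on $X_{k}$ and there is no reason for it to stay bounded uniformly.

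The paper bypasses this entirely. It never isolates $\psi_{k,\epsilon,\delta_{k}}$; instead it works with the combined function
\[
\theta_{k}:=\varphi_{k}+\psi_{k,\epsilon,\delta_{k}}+\delta_{k}\ln|E_{k}|_{h_{k}},
\]
viewed (after push-forward) as a quasi-psh function on $X$. From $(6.3)$ one has $\frac{i}{2\pi}\Theta_{\theta_{k}}(L)\ge-\epsilon_{k}\omega$, hence $dd^{c}\theta_{k}\ge -C\omega$ with $C$ independent of $k$, and the normalization $\sup_{X}\theta_{k}=0$ is built into the construction. These two uniform facts give, by the uniform Skoda-type integrability for compact families of normalized quasi-psh functions, a fixed $a>0$ and a constant $C$ with $\int_{X}e^{-2a\theta_{k}}\le C$ for all $k$. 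One then chooses $s$ so that $\frac{s(1+s_{1})}{s_{1}}\le a$; the second factor in the H\"older split becomes
\[
\Bigl(\int_{U}|f|^{2}e^{-\frac{2s(1+s_{1})}{s_{1}}\theta_{k}}\Bigr)^{\frac{s_{1}}{1+s_{1}}}
\le \bigl(|f|_{L^{\infty}}^{2}\,C\bigr)^{\frac{s_{1}}{1+s_{1}}},
\]
which is uniformly bounded in $U$ and $k$. This uniform exponential integrability of $\theta_{k}$---not an $L^{\infty}$ bound on $\psi_{k,\epsilon,\delta_{k}}$---is the input you are missing.
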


\begin{proof}

Thanks to $(6.3)$ in Lemma 5.6, we can extend
$\varphi_{k}+\psi_{k,\epsilon, \delta_{k}}+\delta\ln |E_{k}|_{h_{k}}$
to the whole $X$, 
satisfying the same inequality. 
Then the condition 
$$\sup_{x\in X} (\varphi_{k}+\psi_{k,\epsilon, \delta}+\delta_{k}\ln |E_{k}|_{h_{k}})(x)=0$$
and $(6.3)$ in Lemma 5.6 imply the existence of $a>0$ such that
$$\int_{X} e^{-2a(\varphi_{k}+\psi_{k,\epsilon, \delta}+\delta_{k}\ln |E_{k}|_{h_{k}})}$$ 
is uniformly bounded for all $k$.

By H\"{o}lder's inequality and the construction $(6.4)$ in Lemma 5.6, we have
$$\int_{U} |f|^{2} e^{-2\widehat{\varphi}_{k}}\leq \leqno (7.1)$$
$$(\int_{U} |f|^{2} e^{-2(1+s_{1})\varphi_{k}})^{\frac{1}{1+s_{1}}}(\int_{U}
|f|^{2} e^{-\frac{2s(1+s_{1})}{s_{1}}(\varphi_{k}+\psi_{k,\epsilon,
\delta_{k}}+\delta_{k}\ln |E_{k}|_{h_{k}})})^{\frac{s_{1}}{1+s_{1}}}$$
for $k\gg 1$, where $U$ is any open subset of $X$.
If we take $ s>0 $ satisfying $\frac{s(1+s_{1})}{s_{1}}\leq a$,
then the uniform boundness of 
$\int_{X} e^{-2a(\varphi_{k}+\psi_{k,\epsilon, \delta_{k}}+\delta_{k}\ln |E_{k}|_{h_{k}})}$ implies that
$$\int_{U} |f|^{2} e^{-\frac{2s(1+s_{1})}{s_{1}}(\varphi_{k}+\psi_{k,\epsilon,
\delta_{k}}+\delta_{k}\ln |E_{k}|_{h_{k}})}\leq C \cdot |f|_{L^{\infty}}\leqno
(7.2)$$
for any $U\subset X$ and $k\gg 1$.
Combining $(7.1)$ and $(7.2)$, we have
$$\int_{U} |f|^{2} e^{-2\widehat{\varphi}_{k}}\leq C_{|f|_{L^{\infty}}}(\int_{U}
|f|^{2} e^{-2(1+s_{1})\varphi_{k}})^{\frac{1}{1+s_{1}}}\leqno (7.3)$$
$$\leq C_{|f|_{L^{\infty}}}(\int_{U} |f|^{2} e^{-2(1+s_{1})\varphi})^{\frac{1}{1+s_{1}}}.$$
for some constant $C_{|f|_{L^{\infty}}}$ independent of the open subset
$U$ and $k\gg 1$.

We now prove the equality 
$\mathcal{I}(\widehat{\varphi}_{k})=\mathcal{I}_{+}(\varphi)$.
The inclusion 
$\mathcal{I}(\widehat{\varphi}_{k})\supset \mathcal{I}_{+}(\varphi)$
comes directly from $(7.3)$. 
By the construction, 
$\widehat{\varphi}_{k}$ is more singular than $(1+\frac{2}{k})\varphi_{k}$.
Then equality $(6.1)$ in Lemma 5.6 
implies that
$\mathcal{I}(\widehat{\varphi}_{k})\subset \mathcal{I}_{+}(\varphi)$.
The lemma is thus proved.

\end{proof}

The following lemma was essentially proved in \cite{Mou}.
\begin{lemme}
The new metrics $\{ \widehat{\varphi}_{k} \}_{k=1}^{\infty}$ satisfy
properties $(ii)$ and $(iii)$ in Lemma 5.6.
\end{lemme}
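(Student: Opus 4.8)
The plan is to read off property $(ii)$ directly from the curvature bound $(6.6)$ of Lemma 5.6 and to derive property $(iii)$ from the Monge--Amp\`ere equation $(6.2)$ by a concentration--of--eigenvalues argument, essentially the one of \cite{Mou}. All eigenvalue estimates are first obtained on $X_k$, with respect to the K\"ahler form $\widetilde{\omega}_k:=\pi^{*}\omega+\delta_k\frac{i}{2\pi}\Theta_{h_k}(-E_k)$ (for $\delta_k$ small this is uniformly comparable to $\pi^{*}\omega$), and then transferred to $X$ over the complement of the measure--zero set $Z_k$. Property $(ii)$ is immediate: $(6.6)$ asserts $\frac{i}{2\pi}\Theta_{\widehat{\varphi}_k}(\pi^{*}L)\geq-(\epsilon_k+\tau_k+\frac{C}{k})\pi^{*}\omega$, so in particular the smallest eigenvalue satisfies $\lambda_{1,k}\geq-\epsilon_k-\tau_k-\frac{C}{k}$, and $\epsilon_k\gg\tau_k+\frac1k$ was arranged in Lemma 5.6.

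For property $(iii)$, set $d=\nd(L,\varphi)$ and let $\Omega_k=(\frac{i}{2\pi}\pi^{*}\Theta_{\varphi_k}(L))_{\ac}+\epsilon_k\pi^{*}\omega+\delta_k\frac{i}{2\pi}\Theta_{h_k}(-E_k)+dd^c\psi_{k,\epsilon,\delta_k}$, with $\psi_{k,\epsilon,\delta_k}$ the solution of $(6.2)$ ($\Omega_k$ being K\"ahler for $\delta_k$ small). First I would establish the pointwise inequality
$$\Big(\frac{i}{2\pi}\Theta_{\widehat{\varphi}_k}(\pi^{*}L)\Big)_{\ac}+2\epsilon_k\pi^{*}\omega\ \geq\ s\,\Omega_k\qquad(k\gg 1),$$
by passing to absolutely continuous parts in $(6.5)$ --- the middle term of $(6.5)$ has a.c.\ part $\Omega_k-\epsilon_k\pi^{*}\omega$ --- and absorbing $(1-s)(\frac{i}{2\pi}\pi^{*}\Theta_{\varphi_k})_{\ac}\geq-\tau_k\pi^{*}\omega$ and $\frac2k\,dd^c\varphi_k\geq-\frac{2C}{k}\pi^{*}\omega$ into $\epsilon_k\pi^{*}\omega$ via $\epsilon_k\gg\tau_k+\frac1k$. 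Writing $\lambda_{1,k}\leq\cdots\leq\lambda_{n,k}$ and $\nu_{1,k}\leq\cdots\leq\nu_{n,k}$ for the eigenvalues of $\frac{i}{2\pi}\Theta_{\widehat{\varphi}_k}(L)$ and of $\Omega_k$ with respect to $\widetilde{\omega}_k$, Weyl monotonicity then gives $\lambda_{j,k}+2\epsilon_k\geq s\,\nu_{j,k}$ for all $j$. The second input comes from $(6.2)$ itself: it yields the pointwise identity $\prod_j\nu_{j,k}=C(k,\delta,\epsilon)\,\epsilon_k^{\,n-d}\geq C\,\epsilon_k^{\,n-d}$ together with a bound $\int_{X_k}\sigma_{d-1}(\nu_k)\,\widetilde{\omega}_k^{\,n}=\binom{n}{d-1}[\Omega_k]^{d-1}\cdot[\widetilde{\omega}_k]^{\,n-d+1}\leq C'$ uniform in $k$, $\sigma_{d-1}$ being the $(d-1)$-st elementary symmetric function.

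Granting these, I would fix $\alpha_0\in\big(\frac{n-d}{n-d+1},1\big)$ and set $U_k=\{\,\nu_{n-d+1,k}<\epsilon_k^{\alpha_0}\,\}$. On $U_k$ one has $\prod_{j\leq n-d+1}\nu_{j,k}<\epsilon_k^{(n-d+1)\alpha_0}$ and $\prod_{j\geq n-d+2}\nu_{j,k}\leq\sigma_{d-1}(\nu_k)$, hence $C\,\epsilon_k^{\,n-d}\leq\epsilon_k^{(n-d+1)\alpha_0}\,\sigma_{d-1}(\nu_k)$, i.e.\ $\sigma_{d-1}(\nu_k)\geq C\,\epsilon_k^{-\beta_0}$ on $U_k$ with $\beta_0:=(n-d+1)\alpha_0-(n-d)>0$. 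Integrating against $\widetilde{\omega}_k^{\,n}$ and using the uniform bound gives $\vol(U_k)\leq\frac{C'}{C}\,\epsilon_k^{\beta_0}$, so $\vol(U_k)\leq\epsilon_k^{\beta}$ with $\beta:=\beta_0/2$ for $k\gg1$. On $X_k\setminus U_k$, since $p\geq n-d+1$, we get $\nu_{p,k}\geq\nu_{n-d+1,k}\geq\epsilon_k^{\alpha_0}$, so $\lambda_{p,k}+2\epsilon_k\geq s\,\epsilon_k^{\alpha_0}\geq\epsilon_k^{\alpha}$ for any fixed $\alpha\in(\alpha_0,1)$ and $k\gg1$. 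Enlarging $U_k$ by an arbitrarily thin neighbourhood of $Z_k$ to make it open in $X$ with negligibly larger volume gives $(iii)$ with these $\alpha,\beta$.

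The main obstacle is the uniform estimate $[\Omega_k]^{d-1}\cdot[\widetilde{\omega}_k]^{\,n-d+1}\leq C'$ as $k\to\infty$: the resolutions $X_k$ and the normal crossing divisors $E_k$ vary with $k$, and in $[\Omega_k]=\pi^{*}(c_1(L)+\epsilon_k\{\omega\})-(1+\delta_k)\,c_1(\mathcal{O}(E_k))$ one must control the intersection numbers of $E_k$ that appear upon expansion. This is handled by taking $\delta_k\to0$ fast enough that every term carrying a positive power of $\delta_k$ together with such an intersection number remains bounded, and $\epsilon_k\to0$ slowly enough that the lower bound $C(k,\delta,\epsilon)\geq C>0$ in $(6.2)$ --- which ultimately rests on $\langle(i\Theta_{\varphi})^{d}\rangle\neq0$ and $\langle(i\Theta_{\varphi})^{d+1}\rangle=0$, i.e.\ on $\nd(L,\varphi)=d$ --- is preserved. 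This bookkeeping is carried out in \cite{Mou}, whose argument I would follow.
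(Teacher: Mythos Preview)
Your treatment of property $(ii)$ is correct and matches the paper. For property $(iii)$, however, you take a noticeably more elaborate route than the paper, and the detour creates the very obstacle you flag at the end.

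The paper works directly with $\widehat{\lambda}_{i,k}=\lambda_{i,k}+2\epsilon_k$ on $X$ and uses only two ingredients: the pointwise lower bound $\prod_{i}\widehat{\lambda}_{i,k}\geq C(s)\,\epsilon_k^{\,n-d}$ coming from the Monge--Amp\`ere equation (your comparison $(\frac{i}{2\pi}\Theta_{\widehat{\varphi}_k})_{\ac}+2\epsilon_k\omega\geq s\,\Omega_k$ is exactly what justifies this), and the \emph{trace} bound $\int_X\big(\sum_i\widehat{\lambda}_{i,k}\big)\omega^n\leq M$. The latter is purely cohomological on the fixed manifold $X$: it is at most $(c_1(L)+2\epsilon_k\{\omega\})\cdot\{\omega\}^{n-1}$, hence uniformly bounded, and no intersection numbers on the varying $X_k$ enter. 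One then sets $U_k=\{\widehat{\lambda}_{p,k}<\epsilon_k^{\alpha}\}$ with $\alpha$ chosen so that $n-d<\alpha p<p$; on $U_k$ one gets $\prod_{i>p}\widehat{\lambda}_{i,k}\geq C(s)\,\epsilon_k^{\,n-d-\alpha p}$, hence by AM--GM $\sum_{i>p}\widehat{\lambda}_{i,k}\geq C\,\epsilon_k^{(n-d-\alpha p)/(n-p)}$, and the trace bound forces $\vol(U_k)\leq\epsilon_k^{\beta}$.

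By contrast, you pass through the auxiliary eigenvalues $\nu_{j,k}$ of $\Omega_k$ and control the volume of $U_k$ via the higher symmetric function $\sigma_{d-1}(\nu_k)$, which forces you to bound $[\Omega_k]^{d-1}\cdot[\widetilde{\omega}_k]^{\,n-d+1}$ uniformly in $k$. Your proposed fix --- taking $\delta_k\to 0$ fast --- does not remove the difficulty: in $[\Omega_k]=\pi^*(c_1(L)+\epsilon_k\{\omega\})-(1+\delta_k)\,c_1(\mathcal{O}(E_k))$ the coefficient of $E_k$ tends to $-1$, not to $0$, so expanding $[\Omega_k]^{d-1}$ produces terms of the form $[E_k]^j\cdot\pi^*(\cdots)$ with $j\geq 1$ that are \emph{not} damped by $\delta_k$ and whose uniform control in $k$ is genuinely unclear. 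The paper sidesteps this entirely by using $\sigma_1$ instead of $\sigma_{d-1}$; if you replace your integral bound on $\sigma_{d-1}(\nu_k)$ by the trace bound on $\widehat{\lambda}_{i,k}$ (or even on $\nu_{i,k}$, which is equally easy since $[\Omega_k]\cdot\pi^*\{\omega\}^{n-1}\leq c_1(L)\cdot\{\omega\}^{n-1}+O(\epsilon_k,\delta_k)$), the argument goes through and the obstacle disappears.
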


\begin{proof}
Let $\lambda_{1}(z)\leq\lambda_{2}(z)\leq\cdots\leq\lambda_{n}(z)$ be the eigenvalues of
the new metric $(X, L, \widehat{\varphi}_{k})$ 
with respect to the base metric $\omega$ for simplicity, 
( i.e. $\lambda_{i}$ here is equal to $\lambda_{i, k}$ in Lemma 5.6, 
since the proof is for fixed $k$, the simplification here will not lead misunderstanding.)
By $(6.6)$ in Lemma 5.6, we have
$$\lambda_{i}(z)\geq -\epsilon_{k}-\frac{C}{k}-\tau_{k} .$$
Let $\widehat{\lambda}_{i}=\lambda_{i}+2\epsilon_{k}$. 
Since $s$ is a fixed positive constant, the Monge-Ampère equation $(6.2)$ implies that
$$\prod_{i=1}^{n}\widehat{\lambda}_{i}(z)\geq C(s)\epsilon_{k}^{n-d}\leqno (8.1)$$
where $C(s)>0$ does not depend on $k$. 
Since $p> n-d$, we can take $\alpha$ such that $0 < \alpha <1$ and $n-d < \alpha p$. 

Let 
$$U_{k}=\{ z \in X \mid  \widehat{\lambda}_{p}(z)< \epsilon_{k}^{\alpha}\} .$$
Since $\epsilon_{k}\gg \tau_{k}+\frac{1}{k}$, we have
$\widehat{\lambda}_{i}(z)=\lambda_{i}(z)+2\epsilon_{k}\geq 0$ for any $z$ and $i$.
Thus the cohomological condition
$$\int_{X}
(\widehat{\lambda}_{1}+\widehat{\lambda}_{2}+\cdots+\widehat{\lambda}_{n}
)\omega^ { n } \leq M$$
implies that 
$$\int_{U_{k}}(\widehat{\lambda}_{1}+\widehat{\lambda}_{2}+\cdots+\widehat{
\lambda } _ { n})\omega^{n}\leq M .\leqno (8.2)$$
Observe that $(8.1)$ and the definition of $U_{k}$
imply that
$$\prod\limits_{p+1\leq i\leq n}\widehat{\lambda}_{i}(z)\geq
C(s)\frac{\epsilon_{k}^{n-d}}{\epsilon_{k}^{\alpha p}}
\qquad\text{for}\hspace{5 pt} z\in U_{k} .$$
Then
$$\sum\limits_{p+1\leq i\leq n}\widehat{\lambda}_{i}(z)\geq
C(\frac{\epsilon_{k}^{n-d}}{\epsilon_{k}^{\alpha p}})^{\frac{1}{n-p}}
\qquad\text{for}\hspace{5 pt} z\in U_{k} \leqno (8.3)$$
by the inequality of arithmetic and geometric means.
Applying $(8.3)$ to $(8.2)$, we have
$$\int_{U_{k}} (\frac{\epsilon_{k}^{n-d}}{\epsilon_{k}^{\alpha
p}})^{\frac{1}{n-p}}\omega^{n}\leq M' .\leqno (8.4)$$
Since $n-d < \alpha p $, $(8.4)$ implies the existence of $\beta > 0$ such that 
$$\vol (U_{k})\leq \epsilon_{k}^{\beta}.$$
The lemma is proved.
\end{proof}

We now prove the final conclusion.
\begin{prop}
Let $(L, \varphi)$ be a pseudo-effective line bundle on a compact kähler manifold $(X, \omega )$. 
Then
$$ H^{p}(X, K_{X}\otimes L\otimes \mathcal{I}_{+}(\varphi))=0
\qquad\text{for}\hspace{5 pt} p\geq n-\nd (L,\varphi)+1.$$
\end{prop}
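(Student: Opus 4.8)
The plan is to reduce the vanishing to Lemma 5.5: it is enough to represent a given class by a sequence of cohomologous \v{C}ech cocycles whose $L^{2}$ norms, measured against a fixed smooth metric, tend to $0$. Fix a finite Stein covering $\mathcal{U}=\{U_{\alpha}\}$ of $X$ (after a harmless shrinking, so that cocycles have bounded coefficients) and a class $u\in\check{H}^{p}(\mathcal{U},K_{X}\otimes L\otimes\mathcal{I}_{+}(\varphi))$ with $p\geq n-\nd(L,\varphi)+1$. By Lemma 5.7 (and 5.6(i)) one has $\mathcal{I}(\widehat{\varphi}_{k})=\mathcal{I}_{+}(\varphi)$ for every $k$, so this group equals $H^{p}(X,K_{X}\otimes L\otimes\mathcal{I}(\widehat{\varphi}_{k}))$; the latter is computed, through the $L^{2}$--Dolbeault isomorphism for the singular weight $\widehat{\varphi}_{k}$ (working on $X$, or on $X\setminus Z_{k}$ with a complete K\"ahler metric, where $Z_{k}$ is the polar set of $\widehat{\varphi}_{k}$), by $L$-valued $(n,p)$-forms that are locally $L^{2}$ with respect to $\widehat{\varphi}_{k}$.

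For each $k$ I turn a fixed \v{C}ech cocycle representing $u$ into a $\bar{\partial}$-closed $(n,p)$-form $f_{k}$ in that class by the partition-of-unity formula; since the coefficients of the cocycle lie in $\mathcal{I}_{+}(\varphi)$, the comparison estimate of Lemma 5.7 (combined with H\"older, to see that its right-hand side is finite) gives $\|f_{k}\|^{2}_{\widehat{\varphi}_{k}}\leq M$ with $M$ independent of $k$. Let $h_{k}$ be the $\widehat{\varphi}_{k}$-harmonic representative of $u$, so $\|h_{k}\|^{2}_{\widehat{\varphi}_{k}}\leq M$ as well. Applying the Bochner--Kodaira--Nakano inequality to the harmonic form $h_{k}$ and using that $h_{k}$ is of type $(n,p)$,
$$0\ \geq\ \int_{X}\langle[\,i\Theta_{\widehat{\varphi}_{k}}(L),\Lambda\,]\,h_{k},h_{k}\rangle_{\widehat{\varphi}_{k}}\ \geq\ \int_{X}(\lambda_{1,k}+\cdots+\lambda_{p,k})\,|h_{k}|^{2}_{\widehat{\varphi}_{k}}\,.$$
Splitting over $U_{k}$ and $X\setminus U_{k}$ and invoking Lemma 5.6(ii)--(iii): on $U_{k}$ one has $\lambda_{1,k}+\cdots+\lambda_{p,k}\geq -C\epsilon_{k}$, and on $X\setminus U_{k}$, since $\epsilon_{k}\gg\tau_{k}+1/k$ and $0<\alpha<1$, one has $\lambda_{1,k}+\cdots+\lambda_{p,k}\geq\lambda_{p,k}+(p-1)\lambda_{1,k}\geq\tfrac{1}{2}\epsilon_{k}^{\alpha}$ for $k$ large. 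Hence $\tfrac{1}{2}\epsilon_{k}^{\alpha}\int_{X\setminus U_{k}}|h_{k}|^{2}_{\widehat{\varphi}_{k}}\leq C\epsilon_{k}M$, so $\int_{X\setminus U_{k}}|h_{k}|^{2}_{\widehat{\varphi}_{k}}\to 0$.

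Since $\widehat{\varphi}_{k}\leq 0$ uniformly in $k$ (from the normalization in Lemma 5.6 together with $\varphi_{k}\preccurlyeq\varphi$), one has $|h_{k}|^{2}_{h_{0}}\leq C'\,|h_{k}|^{2}_{\widehat{\varphi}_{k}}$ for a fixed smooth metric $h_{0}$; thus the $h_{0}$-$L^{2}$-mass of $h_{k}$ is uniformly bounded, its mass outside $U_{k}$ tends to $0$, and $\vol(U_{k})\leq\epsilon_{k}^{\beta}\to 0$ by Lemma 5.6(iii). Pairing $h_{k}$ with a compactly supported $L^{\infty}$ form on a fixed chart and splitting the integral over $U_{k}$ and its complement, these three facts yield $h_{k}\rightharpoonup 0$ weakly in $L^{2}_{\mathrm{loc}}$ for $h_{0}$. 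The inverse of the $L^{2}$--\v{C}ech--Dolbeault isomorphism is realized by finitely many $\bar{\partial}$-solvings on the Stein overlaps with $L^{2}$ estimates whose constants are uniform in $k$ (because $i\Theta_{\widehat{\varphi}_{k}}(L)\geq -2\epsilon_{k}\omega$ with $\epsilon_{k}\to 0$); being continuous and linear, it sends $h_{k}$ to a \v{C}ech cocycle $v_{k}$ cohomologous to $u$ whose components are holomorphic, uniformly $L^{2}_{\mathrm{loc}}$-bounded, and weakly convergent to $0$. A normal-families argument upgrades this to $\int_{U_{\alpha_{0}\cdots\alpha_{p}}}|v_{k,\alpha_{0}\cdots\alpha_{p}}|^{2}_{h_{0}}\to 0$, and Lemma 5.5 gives $u=0$.

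The step I expect to be most delicate is the $L^{2}$-Hodge theory for the singular weight $\widehat{\varphi}_{k}$: one must be sure the harmonic space genuinely computes $H^{p}(X,K_{X}\otimes L\otimes\mathcal{I}_{+}(\varphi))$ and that the uniform bound $\|h_{k}\|_{\widehat{\varphi}_{k}}\leq M$ drawn from Lemma 5.7 is legitimate, and one must keep every constant in the \v{C}ech--Dolbeault translation independent of $k$; by contrast the Bochner estimate itself and the weak-to-strong passage for holomorphic cochains are comparatively routine.
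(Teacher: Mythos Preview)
Your overall architecture matches the paper's: you invoke Lemmas 5.5--5.8 and aim to produce, for each $k$, a \v Cech representative of $u$ whose smooth-metric $L^{2}$ norm tends to $0$. The decisive analytic step, however, is handled differently, and your version contains a genuine gap.

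The paper does \emph{not} use harmonic representatives. It fixes once and for all a single $(n,p)$-form $f$ representing $[u]$ with $\int_{X}|f|^{2}e^{-2(1+s_{1})\varphi}<\infty$, and then applies the Hahn--Banach ``approximate $\bar\partial$-solving'' of the Appendix (i.e.\ the estimate of \cite{DP}) to write $f=\bar\partial u_{k}+v_{k}$ with
\[
\int_{X}|u_{k}|^{2}e^{-2\widehat{\varphi}_{k}}+\frac{1}{2p\epsilon_{k}}\int_{X}|v_{k}|^{2}e^{-2\widehat{\varphi}_{k}}
\ \le\ \int_{X}\frac{|f|^{2}e^{-2\widehat{\varphi}_{k}}}{\widehat{\lambda}_{1,k}+\cdots+\widehat{\lambda}_{p,k}}.
\]
Splitting the right-hand side over $U_{k}$ and $X\setminus U_{k}$ and using Lemma 5.6(ii)--(iii) together with Lemma 5.7 gives $\|v_{k}\|_{\widehat{\varphi}_{k}}\to 0$ \emph{strongly}. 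The point is that on $U_{k}$ one controls $\int_{U_{k}}|f|^{2}e^{-2\widehat{\varphi}_{k}}$ by $C\bigl(\int_{U_{k}}|f|^{2}e^{-2(1+s_{1})\varphi}\bigr)^{1/(1+s_{1})}$, and this tends to $0$ by dominated convergence applied to the \emph{fixed} integrable density $|f|^{2}e^{-2(1+s_{1})\varphi}$, since $\vol(U_{k})\to 0$. Strong convergence then passes through the \v Cech--Dolbeault procedure with $k$-uniform constants, and Lemma 5.5 finishes.

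In your argument the representative $h_{k}$ varies with $k$, so dominated convergence is unavailable on $U_{k}$: your Bochner computation correctly yields $\int_{X\setminus U_{k}}|h_{k}|^{2}_{\widehat{\varphi}_{k}}\to 0$, but on $U_{k}$ you only get a uniform bound $\le M$, and the mass of $h_{k}$ may well concentrate there. This is why you are forced down to weak convergence $h_{k}\rightharpoonup 0$. The subsequent claim that the \v Cech--Dolbeault inverse, ``being continuous and linear'', carries this to weak convergence of the cochains $v_{k}$ is where the proof breaks: the $\bar\partial$-solvings are performed with the weight $\widehat{\varphi}_{k}$, so the map $h_{k}\mapsto v_{k}$ is a \emph{different} bounded operator for each $k$ (between different Hilbert spaces $L^{2}(\widehat{\varphi}_{k})$), and there is no single linear map whose weak continuity you can invoke. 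If instead you solve with a fixed smooth weight to get a single map, the resulting cochains need not lie in $\mathcal{I}_{+}(\varphi)$, so Lemma 5.5 no longer applies. Your normal-families step only upgrades weak to strong \emph{once} weak convergence of the holomorphic cochains is known, which is exactly what is missing.

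A secondary issue you already flag is the $L^{2}$-Hodge theory for the singular weight $\widehat{\varphi}_{k}$ on $X\setminus Z_{k}$ with a complete metric: existence of a harmonic representative requires closed range of the Laplacian, which is not automatic. The paper's Hahn--Banach construction sidesteps this entirely; no harmonic forms are needed, and the error term $v_{k}$ comes with the desired strong estimate built in.
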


\begin{remark}
One of the reason to use
$\mathcal{I}_{+}(\varphi)$ instead of $\mathcal{I}(\varphi)$ is that it does not seem
to be easy to prove that
$$H^{p}(X, K_{X}\otimes L\otimes \mathcal{I}(\varphi))=0\qquad\text{for}\hspace{5 pt}p\geq 1$$
even when $X$ is projective and $\nd(L,\varphi)=\dim X$.
\end{remark}

\begin{proof}

We prove it in two steps.

\textbf{Steps 1}: $L^{2}$ Estimates

Let $\{ \widehat{\varphi}_{k} \}_{k=1}^{\infty}$ be the metrics constructed in
Lemma 5.6,
and let $[u]$ be any element in $H^{p}(X, K_{X}\otimes L\otimes
\mathcal{I}_{+}(\varphi))$.
Let $f$ be a smooth $(n,p)$-form representing $[u]$. 
Then 
$$\int_{X} |f|^{2} e^{-2(1+s_{1})\varphi}< +\infty ,$$
for the constant $s_{1}$ in $(*)$ of Lemma 5.6.
Lemma 5.7 implies that 
$$\int_{U} |f|^{2} e^{-2\widehat{\varphi}_{k}}\leq C (\int_{U} |f|^{2}
e^{-2(1+s_{1})\varphi})^{\frac{1}{1+s_{1}}}\leqno (9.1)$$
for any open subset $U$ of $X$ and $k\gg 1$, where $C$ is a constant  
independent of $U$ and $k$ (but certainly depends on
$|f|_{L^{\infty}}$).
We now use the $L^{2}$ method in \cite{DP} to get a key estimate: 
we can write 
$$f=\overline{\partial} u_{k}+v_{k}\leqno (9.2)$$ 
with the following estimate
$$\int_{X}
|u_{k}|^{2}e^{-2\widehat{\varphi}_{k}}+\frac{1}{2p\epsilon_{k}}\int_{X}
|v_{k}|^{2}e^{-2\widehat{\varphi}_{k}}\leq \int_{X}
\frac{1}{\widehat{\lambda}_{1, k}+\widehat{\lambda}_{2, k}+\cdots+\widehat{\lambda}_{p, k
}}|f|^{2}e^{-2\widehat{\varphi}_{k}},\leqno (9.3)$$
for $\widehat{\lambda}_{i, k}=\lambda_{i, k}+2\epsilon_{k}$.
This comes from the Bochner inequality:
$$\|\overline{\partial}
u\|^{2}_{\widehat{\varphi}_{k}}+\|\overline{\partial}^{*}
u\|^{2}_{\widehat{\varphi}_{k}}\geq \int_{X-Z_{k}}
(\widehat{\lambda}_{1, k}+\widehat{\lambda}_{2, k}+\cdots+\widehat{\lambda}_{p, k}
-C\epsilon_{k})|u|_{\widehat{\varphi}_{k}} ^{2}dV$$
where $Z_{k}$ is the singular locus of $\varphi_{k}$ in
$X$.
(see \cite{DP} or the appendix for details)

Using $(9.3)$, we claim that
$$\lim\limits_{k\to \infty}\int_{X} |v_{k}|^{2} e^{-2\widehat{\varphi}_{k}}\rightarrow 0.\leqno (9.4)$$
Proof of the claim: 
Properties $(ii)$, $(iii)$ in Lemma 5.6 and $(9.3)$ imply that
$$\int_{X}
|u_{k}|^{2}e^{-2\widehat{\varphi}_{k}}+\frac{1}{2p\epsilon_{k}}\int_{X}
|v_{k}|^{2}e^{-2\widehat{\varphi}_{k}}$$
$$\leq
\int_{X}\frac{C_{1}}{\epsilon_{k}^{\alpha}}|f|^{2}e^{-2\widehat{\varphi}_{k}}
+\int_{U_{k}}\frac{1}{C_{2}\epsilon_{k}}|f|^{2}e^{-2\widehat{\varphi}_{k}}.$$
Then 
$$\int_{X} |v_{k}|^{2}e^{-2\widehat{\varphi}_{k}}\leq
C_{3}\epsilon_{k}^{1-\alpha}\int_{X} |f|^{2}e^{-2\widehat{\varphi}_{k}}+ C_{4}
\int_{U_{k}} |f|^{2}e^{-2\widehat{\varphi}_{k}}.\leqno (9.5)$$
Since $\vol (U_{k})\rightarrow 0$ by property $(iii)$ of Lemma 5.6,
$(9.1)$ implies that the second term of the right hand side of $(9.5)$ tends to $0$.
Reminding that $0<\alpha< 1$ and $\epsilon_{k}\rightarrow 0$ as $k\rightarrow \infty$, 
$(9.1)$ implies thus that the first term of the right hand side of $(9.5)$ also tends to $0$.
The claim is proved.

\textbf{Step 2}: Final step

We use Lemma 5.5 to obtain the final conclusion. 
Let $\mathcal{U}=\{U_{\alpha}\}_{\alpha\in I}$ be a Stein covering of $X$.
Thanks to $(9.4)$,  
we get a $p$-cycle representative of each $v_{k}$ by solving $\overline{\partial}$-equations,
i.e., $v_{k}$ can be writen as
$$v_{k}=\{ v_{k,\alpha_{0}...\alpha_{p}}\}\in C^{p}(\mathcal{U}, K_{X}\otimes
L\otimes \mathcal{I}(\widehat{\varphi}_{k}))$$
satisfying the $L^{2}$ conditions:
$$\int_{U_{\alpha_{0}...\alpha_{p}}}
|v_{k,\alpha_{0}...\alpha_{p}}|^{2}e^{-2\widehat{\varphi}_{k}}\leq C \int_{X}
|v_{k}|^{2}e^{-2\widehat{\varphi}_{k}}\leqno (9.6)$$
where $C$ does not depend on $k$.
$(9.6)$ and property $(i)$ in Lemma 5.6 imply that $\{v_{k}\}_{k=1}^{\infty}$ 
are all in $C^{p}(\mathcal{U}, K_{X}\otimes L\otimes\mathcal{I}_{+}(\varphi))$.

Since $\widehat{\varphi}_{k}\leq 0$, $(9.4)$ and $(9.6)$ imply that
$$\lim\limits_{k\to \infty}\int_{U_{i_{0}...i_{p}}} |v_{k,i_{0}...i_{p}}|^{2}=0.\leqno (9.7)$$
By $(9.2)$, $\{v_{k}\}_{k=1}^{\infty}$ are in the same cohomology of $u$.
Using Lemma 5.5, $(9.7)$ implies that $[u]=0$.
Since we choose $[u]$ as any element in $H^{p}(X, K_{X}\otimes L\otimes \mathcal{I}_{+}(\varphi))$, 
the proposition is proved.

\end{proof}

\section{Appendix}
For the convenience of reader, 
we give the proof of estimate $(9.3)$ in Proposition 5.9, but the proof is just extracted from \cite{DP}.
\begin{prop}
Let $(X,\omega)$ be a compact Kähler manifold and 
let $(L,h_{0}e^{-\varphi})$ be a line bundle on $X$ where $h_{0}$ is a smooth metric on $L$
and the quasi-psh function $\varphi$ has analytic singularities and smooth outside a subvariety $Z$. 
Assume that 
$$\frac{i}{2\pi}\Theta_{\varphi}(L)\geq -\epsilon\omega$$
on $X\setminus Z$, 
and $f$ is a smooth $L$-valued $(n,p)$-form satisfying
$$\int_{X}|f|^{2}e^{-2\varphi}dV<\infty .$$
Let $\lambda_{1}\leq\lambda_{2}\leq....\leq\lambda_{n}$ be the eigenvalues of
$\frac{i}{2\pi}\Theta_{\varphi}(L)$ and
$\widehat{\lambda}_{i}=\lambda_{i}+2\epsilon\geq\epsilon$. 
Then there exist $u$ and $v$ such that $f=\overline{\partial} u+v$ and with the following estimate
$$\int_{X}|u|^{2}e^{-2\varphi}dV+\frac{1}{2p\epsilon}\int_{X}|v|^{2}e^{-2\varphi
} dV
\leq\int_{X}\frac{1}{\widehat{\lambda}_{1}+\widehat{\lambda}_{2}+\cdots+\widehat
{\lambda}_{p}}|f|^{2}e^{-2\varphi}dV .$$
\end{prop}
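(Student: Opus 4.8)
The plan is to deduce this estimate from the Bochner-Kodaira-Nakano inequality together with the usual Hahn-Banach/Riesz device for solving $\overline{\partial}$ with weights, exactly as in \cite{DP}; I only indicate the steps. First note that one must assume $\overline{\partial}f=0$: this is the only case used in Proposition 5.9, and it is in fact needed, since a component of $f$ lying in $\overline{\image\,\overline{\partial}^{*}}$ cannot be absorbed into the right-hand side. Since $\varphi$ has analytic singularities along $Z$, the open manifold $X\setminus Z$ carries a complete Kähler metric; fix a family of complete Kähler metrics $\omega_{\delta}\geq\omega$ on $X\setminus Z$ decreasing to $\omega$ as $\delta\to 0$. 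All $L^{2}$ norms below are taken on $(X\setminus Z,\omega_{\delta})$ for the metric $h_{0}e^{-2\varphi}$ on $L$; the hypothesis gives $\frac{i}{2\pi}\Theta_{\varphi}(L)\geq-\epsilon\omega\geq-\epsilon\omega_{\delta}$, so the eigenvalues $\lambda_{j}^{(\delta)}$ of $\frac{i}{2\pi}\Theta_{\varphi}(L)$ with respect to $\omega_{\delta}$ satisfy $\widehat{\lambda}_{j}^{(\delta)}:=\lambda_{j}^{(\delta)}+2\epsilon\geq\epsilon$ and converge pointwise to $\widehat{\lambda}_{j}$ as $\delta\to 0$.

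The key a priori estimate is obtained as follows. For $(n,p)$-forms the Bochner curvature term, in a frame simultaneously diagonalizing $\frac{i}{2\pi}\Theta_{\varphi}(L)$ and $\omega_{\delta}$, is $\sum_{|K|=p}(\sum_{j\in K}\lambda_{j}^{(\delta)})|g_{K}|^{2}$, and since the $\widehat{\lambda}_{j}^{(\delta)}\geq 0$ are increasing, $\sum_{j\in K}\lambda_{j}^{(\delta)}=\sum_{j\in K}\widehat{\lambda}_{j}^{(\delta)}-2p\epsilon\geq(\widehat{\lambda}_{1}^{(\delta)}+\cdots+\widehat{\lambda}_{p}^{(\delta)})-2p\epsilon$. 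Using completeness to apply Bochner-Kodaira-Nakano to every $g\in\mathrm{Dom}(\overline{\partial})\cap\mathrm{Dom}(\overline{\partial}^{*})$ with $\overline{\partial}g=0$, this gives
$$\int_{X\setminus Z}\big(\widehat{\lambda}_{1}^{(\delta)}+\cdots+\widehat{\lambda}_{p}^{(\delta)}\big)\,|g|^{2}e^{-2\varphi}\,dV_{\delta}\ \leq\ \|\overline{\partial}^{*}g\|^{2}+2p\epsilon\,\|g\|^{2}.$$

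Now run the duality argument. Let $G=L^{2}_{(n,p-1)}(L,\varphi)\oplus L^{2}_{(n,p)}(L,\varphi)$ with norm $\|a\|^{2}+\frac{1}{2p\epsilon}\|b\|^{2}$ and $T(a,b)=\overline{\partial}a+b$, so that $T^{*}g=(\overline{\partial}^{*}g,g)$ and $\|T^{*}g\|_{G^{*}}^{2}=\|\overline{\partial}^{*}g\|^{2}+2p\epsilon\|g\|^{2}$. Given $g\in\mathrm{Dom}(\overline{\partial}^{*})$, write $g=g_{1}+g_{2}$ with $g_{1}\in\Ker\overline{\partial}$ and $g_{2}\in(\Ker\overline{\partial})^{\perp}=\overline{\image\,\overline{\partial}^{*}}$; since $\overline{\partial}^{*}\circ\overline{\partial}^{*}=0$ one has $g_{2}\in\Ker\overline{\partial}^{*}$, hence $\overline{\partial}^{*}g_{1}=\overline{\partial}^{*}g$, $\|g_{1}\|\leq\|g\|$, and $\langle f,g\rangle=\langle f,g_{1}\rangle$ because $f\in\Ker\overline{\partial}$. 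Combining this with the a priori estimate and Cauchy-Schwarz against the weight $\widehat{\lambda}_{1}^{(\delta)}+\cdots+\widehat{\lambda}_{p}^{(\delta)}$ yields $|\langle f,g\rangle|^{2}\leq(\int\frac{|f|^{2}}{\widehat{\lambda}_{1}^{(\delta)}+\cdots+\widehat{\lambda}_{p}^{(\delta)}}e^{-2\varphi}dV_{\delta})\,\|T^{*}g\|_{G^{*}}^{2}$ for every $g\in\mathrm{Dom}(T^{*})$, and the standard functional-analytic lemma for solving $\overline{\partial}$ (cf. \cite{Dem}) produces $u_{\delta},v_{\delta}$ with $f=\overline{\partial}u_{\delta}+v_{\delta}$ and $\int|u_{\delta}|^{2}e^{-2\varphi}dV_{\delta}+\frac{1}{2p\epsilon}\int|v_{\delta}|^{2}e^{-2\varphi}dV_{\delta}\leq\int\frac{|f|^{2}}{\widehat{\lambda}_{1}^{(\delta)}+\cdots+\widehat{\lambda}_{p}^{(\delta)}}e^{-2\varphi}dV_{\delta}$. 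Letting $\delta\to 0$, the right-hand side tends to $\int_{X}\frac{|f|^{2}}{\widehat{\lambda}_{1}+\cdots+\widehat{\lambda}_{p}}e^{-2\varphi}dV$ by dominated convergence (here $\widehat{\lambda}_{j}^{(\delta)}\geq\epsilon$ and $\int_{X}|f|^{2}e^{-2\varphi}<\infty$ are used), so $u_{\delta},v_{\delta}$ are $L^{2}$-bounded; passing to weak limits $u,v$ gives $f=\overline{\partial}u+v$ with the asserted estimate — and $\overline{\partial}v=0$, as needed in Proposition 5.9 — the limits being honest forms on all of $X$ since the analytic set $Z$ is removable for such $L^{2}$ $\overline{\partial}$-problems.

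The Bochner identity, the eigenvalue computation for $(n,p)$-forms, and the functional analysis are routine. The one point that needs care — and the place where the hypothesis that $\varphi$ has analytic singularities really enters — is the analysis near $Z$: the passage to a complete Kähler metric on $X\setminus Z$, the density of compactly supported forms in the graph norm needed to apply Bochner-Kodaira-Nakano, and the $\delta\to 0$ limiting argument; all of this is carried out in detail in \cite{DP}.
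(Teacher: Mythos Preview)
Your proof is correct and follows essentially the same route as the paper's: complete K\"ahler metrics $\omega_{\delta}$ on $X\setminus Z$, the Bochner--Kodaira--Nakano inequality giving the lower bound $\widehat{\lambda}_{1}+\cdots+\widehat{\lambda}_{p}-2p\epsilon$ on the curvature term, the decomposition $g=g_{1}+g_{2}$ with $g_{1}\in\Ker\overline{\partial}$, Cauchy--Schwarz, Hahn--Banach, and the limit $\delta\to 0$. You are also right to flag that the hypothesis $\overline{\partial}f=0$ is needed and used (the paper invokes ``$f\in\Ker\overline{\partial}$'' in the proof without stating it in the proposition), and your tracking of the $\delta$-dependent eigenvalues $\widehat{\lambda}_{j}^{(\delta)}$ is slightly more careful than the paper's own write-up.
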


\begin{proof}

Let $\omega_{1}$ be a complete Kähler metric on $X\setminus Z$ and $\omega_{\delta}=\omega+\delta\omega_{1}$
for some $\delta> 0$. 
We now do the standard $L^{2}$ estimate on $(X\setminus Z, \omega_{\delta}, L, \varphi)$.

If $s$ is a $L$-valued $(n,p)$ form in $C_{c}^{\infty}(X\setminus Z)$, then the Bochner inequality implies that:
$$\| \overline{\partial} s\|_{\delta} ^{2}+\| \overline{\partial}^{*} s\|_{\delta} ^{2}
\geq \int_{X\setminus Z} (\widehat{\lambda}_{1}+\widehat{\lambda}_{2}+\cdots+\widehat{\lambda}_{p}-2p\epsilon) |s|^{2}e^{-2\varphi}\omega_{\delta}^{n}\leqno (1)$$
where $\| s\|_{\delta} ^{2}=\int_{X} |s|^{2}e^{-2\varphi} \omega_{\delta}^{n}$.
Notice that there is an abuse of notion here. 
We calculate the norm $|u|^{2}$ by the metric (or the volum form) in the equations. 
For example, if the volum form is $\omega_{\delta}^{n}$, 
then we calculate the norm of $u$ by the metrics $\omega_{\delta}$ and $h_{0}$. 

Since $f$ is a $(n,p)$-form, the condition $\int_{X}|f|^{2}e^{-2\varphi}\omega^{n}<+\infty$ implies that 
$$f\in L^{2}(X\setminus Z, L, \varphi, \omega_{\delta})
\qquad\text{for}\hspace{5 pt} \delta> 0.$$
We write every form $s$ in the domain of the $L^{2}$ extension of $\overline{\partial}^{*}$ as $s=s_{1}+s_{2}$ with
$$s_{1}\in \Ker \overline{\partial} 
\qquad\text{and}\qquad 
s_{2} \in (\Ker \overline{\partial})^{\bot}\subset \Ker \overline{\partial}^{*} .$$
Since $f\in \Ker \overline{\partial}$, by $(1)$ we obtain
$$|\langle f, s \rangle|_{\varphi,\delta} ^{2}=|\langle f, s_{1} \rangle|_{\varphi,\delta} ^{2}$$
$$\leq \int_{X\setminus Z}\frac{1}{\widehat{\lambda}_{1}+\widehat{\lambda}_{2}+\cdots+\widehat{\lambda}_{p}}|f|^{2}e^{-2\varphi}dV_{\delta}\int_{X\setminus Z}(\widehat{\lambda}_{1}+\widehat{\lambda}_{2}+\cdots+\widehat{\lambda}_{p})|s_{1}|^{2}e^{-2\varphi}dV_{\delta}$$
$$\leq \int_{X\setminus Z}\frac{1}{\widehat{\lambda}_{1}+\widehat{\lambda}_{2}+\cdots+\widehat{\lambda}_{p}}|f|^{2}e^{-2\varphi}dV_{\delta} 
(\| \overline{\partial}^{*} s_{1}\|_{\delta} ^{2}+2p\epsilon\| \overline{\partial} s_{1}\|_{\delta} ^{2})$$
$$\leq \int_{X\setminus Z}\frac{1}{\widehat{\lambda}_{1}+\widehat{\lambda}_{2}+\cdots+\widehat{\lambda}_{p}}|f|^{2}e^{-2\varphi}dV_{\delta} 
(\| \overline{\partial}^{*} s\|_{\delta} ^{2}+2p\epsilon\| \overline{\partial} s\|_{\delta} ^{2}).$$
By the Hahn-Banach theorem, we can find $v_{\delta}, u_{\delta}$ such that
$$\langle f, s\rangle_{\delta}=\langle u_{\delta},  \overline{\partial}^{*} s\rangle_{\delta}+\langle v_{\delta}, s\rangle_{\delta}
\qquad\text{for any}\hspace{5 pt} s ,$$
and with the following estimate
$$\| u_{\delta}\|_{\delta}^{2}+\frac{1}{2p\epsilon}\| v_{\delta}\|_{\delta}^{2}\leq C\int_{X}\frac{1}{\widehat{\lambda}_{1}+\widehat{\lambda}_{2}+\cdots+\widehat{\lambda}_{p}}|f|^{2}e^{-2\varphi}\omega_{\delta}^{n}.$$
Thereofre
$$ f=\overline{\partial} u_{\delta}+v_{\delta}. \leqno (2)$$
Since the norm $\|\cdot\|_{\delta}$ for $(n,p)$-forms is increasing when $\delta\rightarrow 0$, 
we find limits
$$u=\lim_{\delta\rightarrow 0} u_{\delta} \qquad\text{and}\qquad
 v=\lim_{\delta\rightarrow 0} v_{\delta} \leqno (3)$$
satisfying
$$\| u\|_{\delta}^{2}+\frac{1}{2p\epsilon}\| v\|_{\delta}^{2}
\leq C\int_{X}\frac{1}{\widehat{\lambda}_{1}+\widehat{\lambda}_{2}+\cdots+\widehat{\lambda}_{p}}|f|^{2}e^{-2\varphi}\omega_{\delta}^{n} \leqno (4)$$
$$\leq C\int_{X}\frac{1}{\widehat{\lambda}_{1}+\widehat{\lambda}_{2}+\cdots+\widehat{\lambda}_{p}}|f|^{2}e^{-2\varphi}\omega^{n}$$
for any $\delta> 0$.
Formulas $(2)$ and $(3)$ imply that
$f=\overline{\partial} u+v$.
Let $\delta\rightarrow 0$ in $(4)$, we obtain the estimate in the proposition.

\end{proof}

\end{document}